\spnewtheorem{main-theorem}{Main Theorem}{\bfseries}{\itshape}
\spnewtheorem*{proof-idea}{Proof Idea}{\itshape}{\rmfamily}
\spnewtheorem*{intuition}{Intuition}{\itshape}{\rmfamily}
\newenvironment{lemmax}[1]
  {\lemmaxinner}
  {\endlemmaxinner}
\newcommand{\keywords}[1]{\par\addvspace\baselineskip\noindent\keywordname\enspace\ignorespaces#1}
\newcommand*{\define}[1]{\emph{#1}}
\providecommand\iff{\DOTSB\;\Longleftrightarrow\;}
\providecommand\implies{\DOTSB\;\Longrightarrow\;}
\DeclarePairedDelimiter\parens{\lparen}{\rparen}
\DeclarePairedDelimiter\abs{\lvert}{\rvert}
\DeclarePairedDelimiterX\inner[2]{\langle}{\rangle}{#1,#2} 
\DeclarePairedDelimiter\set{\{}{\}} 
\DeclarePairedDelimiter\net{\{}{\}}
\DeclarePairedDelimiter\family{\{}{\}}
\mathchardef\breakingcomma\mathcode`\,
\newcommand*\ntuple[1]{\lparen\mathcode`\,=\string"8000 #1\rparen}
\newcommand*{\Z}{{\mathbb{Z}}}
\newcommand*{\R}{\mathbb{R}}
\DeclareMathOperator{\identity}{id}
\DeclareMathOperator{\Exists}{\exists}
\DeclareMathOperator{\ForEach}{\forall}
\newcommand*\Holds{:}
\newcommand*\SuchThat{:}
\newcommand*\leftaction{\triangleright}
\newcommand*\inducedleftaction{\mathbin{\protect\scalerel*{\blacktriangleright}{\triangleright}}}
\newcommand*\rightsemiaction{\mathbin{\protect\scalerel*{\trianglelefteqslant}{\rhd}}} 
\newcommand*\inducedrightsemiaction{\mathbin{\protect\scalerel*{\blacktriangleleft}{\triangleleft}}} 
\DeclareMathOperator{\Sym}{Sym}
\newcommand*{\transpose}{\intercal} 
\def\moverlay{\mathpalette\mov@rlay}
\def\mov@rlay#1#2{\leavevmode\vtop{%
   \baselineskip\z@skip \lineskiplimit-\maxdimen
   \ialign{\hfil$\m@th#1##$\hfil\cr#2\crcr}}}
\newcommand{\charfusion}[3][\mathord]{
    #1{\ifx#1\mathop\vphantom{#2}\fi
        \mathpalette\mov@rlay{#2\cr#3}
      }
    \ifx#1\mathop\expandafter\displaylimits\fi}
\newcommand{\cupdot}{\charfusion[\mathbin]{\cup}{\cdot}}
\newcommand*\boundary{\partial}
\DeclareMathOperator{\entropy}{ent}
\DeclareMathOperator{\powerset}{\mathcal{P}}
\DeclareMathOperator{\domain}{dom}
\DeclareMathOperator{\diff}{diff}
\DeclareMathOperator{\Cyl}{Cyl}
\newcommand*\occurs{\sqsubseteq}
\newcommand*\suchthat{\mid} 
\newcommand*\from{\colon} 
\newcommand*\quotient{\slash}
\renewcommand*{\restriction}{\mathord{\upharpoonright}}
\newcommand*{\blank}{\mathord{\_}} 
\newcommand{\closure}[1]{\mkern 1.5mu\overline{\mkern-1.5mu#1\mkern-1.5mu}\mkern 1.5mu} 
\newcommand*{\graffito}[1]{}
\newcommand*{\mathnote}[1]{}
\renewcommand*{\index}[1]{}
\begin{document}

  \mainmatter

  \title{The Garden of Eden Theorem for Cellular Automata on Group Sets}
  \authorrunning{The Garden of Eden Theorem for Cellular Automata on Group Sets}
  \author{Simon Wacker}

  \institute{%
    Karlsruhe Institute of Technology\\
    \mails\\
    \url{http://www.kit.edu}%
  }

  \maketitle

  \begin{abstract}
    We prove the Garden of Eden theorem for cellular automata with finite set of states and finite neighbourhood on right amenable left homogeneous spaces with finite stabilisers. It states that the global transition function of such an automaton is surjective if and only if it is pre-injective. Pre-Injectivity means that two global configurations that differ at most on a finite subset and have the same image under the global transition function must be identical.
    \keywords{cellular automata, group actions, Garden of Eden theorem}
  \end{abstract} 

  The notion of amenability for groups was introduced by John von Neumann in 1929. It generalises the notion of finiteness. A group $G$ is \emph{left} or \emph{right amenable} if there is a finitely additive probability measure on $\powerset(G)$ that is invariant under left and right multiplication respectively. Groups are left amenable if and only if they are right amenable. A group is \emph{amenable} if it is left or right amenable.

  The definitions of left and right amenability generalise to left and right group sets respectively. A left group set $\ntuple{M, G, \leftaction}$ is \emph{left amenable} if there is a finitely additive probability measure on $\powerset(M)$ that is invariant under $\leftaction$. There is in general no natural action on the right that is to a left group action what right multiplication is to left group multiplication. Therefore, for a left group set there is no natural notion of right amenability.

  A transitive left group action $\leftaction$ of $G$ on $M$ induces, for each element $m_0 \in M$ and each family $\family{g_{m_0, m}}_{m \in M}$ of elements in $G$ such that, for each point $m \in M$, we have $g_{m_0, m} \leftaction m_0 = m$, a right quotient set semi-action $\rightsemiaction$ of $G \quotient G_0$ on $M$ with defect $G_0$ given by $m \rightsemiaction g G_0 = g_{m_0, m} g g_{m_0, m}^{-1} \leftaction m$, where $G_0$ is the stabiliser of $m_0$ under $\leftaction$. Each of these right semi-actions is to the left group action what right multiplication is to left group multiplication. They occur in the definition of global transition functions of cellular automata over left homogeneous spaces as defined in \cite{wacker:automata:2016}. A \emph{cell space} is a left group set together with choices of $m_0$ and $\family{g_{m_0, m}}_{m \in M}$.

  A cell space $\mathcal{R}$ is \emph{right amenable} if there is a finitely additive probability measure on $\powerset(M)$ that is semi-invariant under $\rightsemiaction$. For example cell spaces with finite sets of cells, abelian groups, and finitely right generated cell spaces of sub-exponential growth are right amenable, in particular, quotients of finitely generated groups of sub-exponential growth by finite subgroups acted on by left multiplication. A net of non-empty and finite subsets of $M$ is a \emph{right Følner net} if, broadly speaking, these subsets are asymptotically invariant under $\rightsemiaction$. A finite subset $E$ of $G \quotient G_0$ and two partitions $\family{A_e}_{e \in E}$ and $\family{B_e}_{e \in E}$ of $M$ constitute a \emph{right paradoxical decomposition} if the map $\blank \rightsemiaction e$ is injective on $A_e$ and $B_e$, and the family $\family{(A_e \rightsemiaction e) \cupdot (B_e \rightsemiaction e)}_{e \in E}$ is a partition of $M$. The Tarski-Følner theorem states that right amenability, the existence of right Følner nets, and the non-existence of right paradoxical decompositions are equivalent. We prove it in \cite{wacker:amenable:2016} for cell spaces with finite stabilisers.

  For a right amenable cell space with finite stabilisers we may choose a right Følner net $\mathcal{F} = \family{F_i}_{i \in I}$. The entropy of a subset $X$ of $Q^M$ with respect to $\mathcal{F}$, where $Q$ is a finite set, is, broadly speaking, the asymptotic growth rate of the number of finite patterns with domain $F_i$ that occur in $X$. For subsets $E$ and $E'$ of $G \quotient G_0$, an $\ntuple{E, E'}$-tiling is a subset $T$ of $M$ such that $\family{t \rightsemiaction E}_{t \in T}$ is pairwise disjoint and $\family{t \rightsemiaction E'}_{t \in T}$ is a cover of $M$. If for each point $t \in T$ not all patterns with domain $t \rightsemiaction E$ occur in a subset of $Q^M$, then that subset does not have maximal entropy. 

  The global transition function of a cellular automaton with finite set of states and finite neighbourhood over a right amenable cell space with finite stabilisers, as introduced in \cite{wacker:automata:2016}, is surjective if and only if its image has maximal entropy
  and it is pre-injective if and only if its image has maximal entropy.
  This establishes the Garden of Eden theorem,
  which states that a global transition function as above is surjective if and only if it is pre-injective. This answers a question posed by S{\'{e}}bastien Moriceau at the end of his paper \enquote{Cellular Automata on a $G$-Set}\cite{moriceau:2011}.

  The Garden of Eden theorem for cellular automata over $\Z^2$ is a famous theorem by Edward Forrest Moore and John R. Myhill from 1962 and 1963, see the papers \enquote{Machine models of self-reproduction}\cite{moore:1962} and \enquote{The converse of Moore's Garden-of-Eden theorem}\cite{myhill:1963}. This paper is greatly inspired by the monograph \enquote{Cellular Automata and Groups}\cite{ceccherini-silberstein:coornaert:2010} by Tullio Ceccherini-Silberstein and Michel Coornaert. 

  In Sect.~\ref{sec:interiors-closures-and-boundaries} we introduce $E$-interiors, $E$-closures, and $E$-boundaries of subsets of $M$. In Sect.~\ref{sec:tilings} we introduce $\ntuple{E, E'}$-tilings of cell spaces. In Sect.~\ref{sec:entropies} we introduce entropies of subsets of $Q^M$. And in Sect.~\ref{sec:gardens-of-eden} we prove the Garden of Eden theorem.

  \subsubsection{Preliminary Notions.} A \define{left group set} is a triple $\ntuple{M, G, \leftaction}$, where $M$ is a set, $G$ is a group, and $\leftaction$ is a map from $G \times M$ to $M$, called \define{left group action of $G$ on $M$}, such that $G \to \Sym(M)$, $g \mapsto [g \leftaction \blank]$, is a group homomorphism. The action $\leftaction$ is \define{transitive} if $M$ is non-empty and for each $m \in M$ the map $\blank \leftaction m$ is surjective; and \define{free} if for each $m \in M$ the map $\blank \leftaction m$ is injective. For each $m \in M$, the set $G \leftaction m$ is the \define{orbit of $m$}, the set $G_m = (\blank \leftaction m)^{-1}(m)$ is the \define{stabiliser of $m$}, and, for each $m' \in M$, the set $G_{m, m'} = (\blank \leftaction m)^{-1}(m')$ is the \define{transporter of $m$ to $m'$}.

  A \define{left homogeneous space} is a left group set $\mathcal{M} = \ntuple{M, G, \leftaction}$ such that $\leftaction$ is transitive. A \define{coordinate system for $\mathcal{M}$} is a tuple $\mathcal{K} = \ntuple{m_0, \family{g_{m_0, m}}_{m \in M}}$, where $m_0 \in M$ and for each $m \in M$ we have $g_{m_0, m} \leftaction m_0 = m$. The stabiliser $G_{m_0}$ is denoted by $G_0$. The tuple $\mathcal{R} = \ntuple{\mathcal{M}, \mathcal{K}}$ is a \define{cell space}. The set $\set{g G_0 \suchthat g \in G}$ of left cosets of $G_0$ in $G$ is denoted by $G \quotient G_0$. The map $\rightsemiaction \from M \times G \quotient G_0 \to M$, $(m, g G_0) \mapsto g_{m_0, m} g g_{m_0, m}^{-1} \leftaction m\ (= g_{m_0, m} g \leftaction m_0)$ is a \define{right semi-action of $G \quotient G_0$ on $M$ with defect $G_0$}, which means that
  \begin{gather*}
    \ForEach m \in M \Holds m \rightsemiaction G_0 = m,\\
    \ForEach m \in M \ForEach g \in G \Exists g_0 \in G_0 \SuchThat \ForEach \mathfrak{g}' \in G \quotient G_0 \Holds
          m \rightsemiaction g \cdot \mathfrak{g}' = (m \rightsemiaction g G_0) \rightsemiaction g_0 \cdot \mathfrak{g}'.
  \end{gather*}
  It is \define{transitive}, which means that the set $M$ is non-empty and for each $m \in M$ the map $m \rightsemiaction \blank$ is surjective; and \define{free}, which means that for each $m \in M$ the map $m \rightsemiaction \blank$ is injective; and \define{semi-commutes with $\leftaction$}, which means that
  \begin{equation*}
    \ForEach m \in M \ForEach g \in G \Exists g_0 \in G_0 \SuchThat \ForEach \mathfrak{g}' \in G \quotient G_0 \Holds
          (g \leftaction m) \rightsemiaction \mathfrak{g}' = g \leftaction (m \rightsemiaction g_0 \cdot \mathfrak{g}').
  \end{equation*}
  The maps $\iota \from M \to G \quotient G_0$, $m \mapsto G_{m_0, m}$, and $m_0 \rightsemiaction \blank$ are inverse to each other. Under the identification of $M$ with $G \quotient G_0$ by either of these maps, we have $\rightsemiaction \from (m, \mathfrak{g}) \mapsto g_{m_0, m} \leftaction \mathfrak{g}$.

  A left homogeneous space $\mathcal{M}$ is \define{right amenable} if there is a coordinate system $\mathcal{K}$ for $\mathcal{M}$ and there is a finitely additive probability measure $\mu$ on $M$ such that 
  \begin{equation*}
    \ForEach \mathfrak{g} \in G \quotient G_0 \ForEach A \subseteq M \Holds \parens[\big]{(\blank \rightsemiaction \mathfrak{g})\restriction_A \text{ injective} \implies \mu(A \rightsemiaction \mathfrak{g}) = \mu(A)},
  \end{equation*}
  in which case the cell space $\mathcal{R} = \ntuple{\mathcal{M}, \mathcal{K}}$ is called \define{right amenable}. When the stabiliser $G_0$ is finite, that is the case if and only if there is a \define{right Følner net in $\mathcal{R}$ indexed by $(I, \leq)$}, which is a net $\net{F_i}_{i \in I}$ in $\set{F \subseteq M \suchthat F \neq \emptyset, F \text{ finite}}$ such that
  \begin{equation*}
    \ForEach \mathfrak{g} \in G \quotient G_0 \Holds \lim_{i \in I} \frac{\abs{F_i \smallsetminus (\blank \rightsemiaction \mathfrak{g})^{-1}(F_i)}}{\abs{F_i}} = 0.
  \end{equation*}

  A \define{semi-cellular automaton} is a quadruple $\mathcal{C} = \ntuple{\mathcal{R}, Q, N, \delta}$, where $\mathcal{R}$ is a cell space; $Q$, called \define{set of states}, is a set; $N$, called \define{neighbourhood}, is a subset of $G \quotient G_0$ such that $G_0 \cdot N \subseteq N$; and $\delta$, called \define{local transition function}, is a map from $Q^N$ to $Q$. A \define{local configuration} is a map $\ell \in Q^N$, a \define{global configuration} is a map $c \in Q^M$, and a \define{pattern} is a map $p \in Q^A$, where $A$ is a subset of $M$. The stabiliser $G_0$ acts on $Q^N$ on the left by $\bullet \from G_0 \times Q^N \to Q^N$, $(g_0, \ell) \mapsto [n \mapsto \ell(g_0^{-1} \cdot n)]$, and the group $G$ acts on the set of patterns on the left by
  \begin{align*}
    \inducedleftaction \from G \times \bigcup_{A \subseteq M} Q^A &\to     \bigcup_{A \subseteq M} Q^A,\\
                                                           (g, p) &\mapsto \left[
                                                                             \begin{aligned}
                                                                               g \leftaction \domain(p) &\to     Q,\\
                                                                                                      m &\mapsto p(g^{-1} \leftaction m).
                                                                             \end{aligned}
                                                                           \right]
  \end{align*}
%
  The \define{global transition function of $\mathcal{C}$} is the map $\Delta \from Q^M \to Q^M$, $c \mapsto [m \mapsto \delta(n \mapsto c(m \rightsemiaction n))]$.

  A \define{cellular automaton} is a semi-cellular automaton $\mathcal{C} = \ntuple{\mathcal{R}, Q, N, \delta}$ such that $\delta$ is \define{$\bullet$-invariant}, which means that, for each $g_0 \in G_0$, we have $\delta(g_0 \bullet \blank) = \delta(\blank)$. Its global transition function is $\inducedleftaction$-equivariant, which means that, for each $g \in G$, we have $\Delta(g \inducedleftaction \blank) = g \inducedleftaction \Delta(\blank)$.

  For each $A \subseteq M$, let $\pi_A \from Q^M \to Q^A$, $c \mapsto c\restriction_A$.

  \section{Interiors, Closures, and Boundaries} 
  \label{sec:interiors-closures-and-boundaries}

  In this section, let $\mathcal{R} = \ntuple{\ntuple{M, G, \leftaction}, \ntuple{m_0, \family{g_{m_0, m}}_{m \in M}}}$ be a cell space.

  In Definition~\ref{def:interior-closure-boundary} we introduce $E$-interiors, $E$-closures, and $E$-boundaries of subsets of $M$. In Lemma~\ref{lem:Delta-X-A-minus-plus-are-surjective} we define surjective restrictions $\Delta_{X, A}^-$ of global transition functions to patterns. And in Theorem~\ref{thm:boundary-characterisation-of-folner-net} we show that right Følner nets are those nets whose components are asymptotically invariant under taking finite boundaries.

  \begin{definition} 
  \label{def:interior-closure-boundary}
    Let $A$ be a subset of $M$ and let $E$ be a subset of $G \quotient G_0$.
    \begin{enumerate}
      \item The set 
            \begin{equation*}
              A^{-E} = \set{m \in M \suchthat m \rightsemiaction E \subseteq A}\ \parens[\big]{= \bigcap_{e \in E} \bigcup_{a \in A} (\blank \rightsemiaction e)^{-1}(a)} \mathnote{$E$-interior $A^{-E}$ of $A$}
            \end{equation*}
            is called \define{$E$-interior of $A$}\index{interior of $A$@$E$-interior of $A$}.
      \item The set
            \begin{equation*} 
              A^{+E} = \set{m \in M \suchthat (m \rightsemiaction E) \cap A \neq \emptyset}\ \parens[\big]{= \bigcup_{e \in E} \bigcup_{a \in A} (\blank \rightsemiaction e)^{-1}(a)} \mathnote{$E$-closure $A^{+E}$ of $A$}
            \end{equation*}
            is called \define{$E$-closure of $A$}\index{closure of $A$@$E$-closure of $A$}.
      \item The set
              $\boundary_E A = A^{+E} \smallsetminus A^{-E}$ 
            is called \define{$E$-boundary of $A$}\index{boundary of $A$@$E$-boundary of $A$}.
    \end{enumerate}
  \end{definition}

  \begin{remark}
  \label{rem:group:interior-closure-boundary}
    Let $\mathcal{R}$ be the cell space $\ntuple{\ntuple{G, G, \cdot}, \ntuple{e_G, \family{g}_{g \in G}}}$, where $G$ is a group and $e_G$ is its neutral element. Then, $G_0 = \set{e_G}$ and $\rightsemiaction = \cdot$. Hence, the notions of $E$-interior, $E$-closure, and $E$-boundary are the same as the ones defined in \cite[Sect.~5.4, Paragraph~2]{ceccherini-silberstein:coornaert:2010}.
  \end{remark}


  \begin{example}
  \label{ex:sphere:interior-closure-and-boundary}
    Let $M$ be the Euclidean unit $2$-sphere, that is, the surface of the ball of radius $1$ in $3$-dimensional Euclidean space, and let $G$ be the rotation group. The group $G$ acts transitively but not freely on $M$ on the left by $\leftaction$ by function application, that is, by rotation about the origin. For each point $m \in M$, its orbit is $M$ and its stabiliser is the group of rotations about the line through the origin and itself.

    Furthermore, let $m_0$ be the north pole $(0,0,1)^\transpose$ of $M$ and, for each point $m \in M$, let $g_{m_0, m}$ be a rotation about an axis in the $(x, y)$-plane that rotates $m_0$ to $m$. The stabiliser $G_0$ of the north pole $m_0$ under $\leftaction$ is the group of rotations about the $z$-axis. An element $g G_0 \in G \quotient G_0$ semi-acts on a point $m$ on the right by the induced semi-action $\rightsemiaction$ by first changing the rotation axis of $g$ such that the new axis stands to the line through the origin and $m$ as the old one stood to the line through the origin and $m_0$, $g_{m_0, m} g g_{m_0, m}^{-1}$, and secondly rotating $m$ as prescribed by this new rotation.

    Moreover, let $A$ be a curved circular disk of radius $3 \rho$ with the north pole $m_0$ at its centre, let $g$ be the rotation about an axis $a$ in the $(x,y)$-plane by $\rho$ radians, let $E$ be the set $\set{g_0 g G_0 \suchthat g_0 \in G_0}$, and, for each point $m \in M$, let $E_m$ be the set $m \rightsemiaction E$. Because $G_0$ is the set of rotations about the $z$-axis and $m_0 \rightsemiaction E = g_{m_0, m_0} G_0 g \leftaction m_0 = G_0 \leftaction (g \leftaction m_0)$, the set $E_{m_0}$ is the boundary of a curved circular disk of radius $\rho$ with the north pole $m_0$ at its centre. And, for each point $m \in M$, because $m \rightsemiaction E = g_{m_0, m} \leftaction E_{m_0}$, the set $E_m$ is the boundary of a curved circular disk of radius $\rho$ with $m$ at its centre. 

    The $E$-interior of $A$ is the curved circular disk of radius $2 \rho$ with the north pole $m_0$ at its centre. The $E$-closure of $A$ is the curved circular disk of radius $4 \rho$ with the north pole $m_0$ at its centre. And the $E$-boundary of $A$ is the annulus bounded by the boundaries of the $E$-interior and the $E$-closure of $A$. 
  \end{example}

  Essential properties of and relations between interiors, closures, and boundaries are given in the next lemma. The upper bound given in its corollary follows from the last part of Item~\ref{it:properties-of-interior-closure-and-boundary:finite}.

  \begin{lemma} 
  \label{lem:properties-of-interior-closure-and-boundary}
    Let $A$ be a subset of $M$, let $\family{A_i}_{i \in I}$ be a family of subsets of $M$, let $e$ be an element of $G \quotient G_0$, and let $E$ and $E'$ be two subsets of $G \quotient G_0$.
    \begin{enumerate}
      \item \label{it:properties-of-interior-closure-and-boundary:only-neutral-element}
            $A^{-\set{G_0}} = A$, $A^{+\set{G_0}} = A$, and $\boundary_{\set{G_0}} A = \emptyset$.
      \item \label{it:properties-of-interior-closure-and-boundary:only-neutral-element-and-another} 
            $A^{-\set{G_0, e}} = A \cap (\blank \rightsemiaction e)^{-1}(A)$, $A^{+\set{G_0, e}} = A \cup (\blank \rightsemiaction e)^{-1}(A)$, and $\boundary_{\set{G_0, e}} A = A \smallsetminus (\blank \rightsemiaction e)^{-1}(A) \cup (\blank \rightsemiaction e)^{-1}(A) \smallsetminus A$.
      \item \label{it:properties-of-interior-closure-and-boundary:complement} 
            $(M \smallsetminus A)^{-E} = M \smallsetminus A^{+E}$ and $(M \smallsetminus A)^{+E} = M \smallsetminus A^{-E}$.
      \item \label{it:properties-of-interior-closure-and-boundary:inclusions}
            Let $E \subseteq E'$. Then, $A^{-E} \supseteq A^{-E'}$, $A^{+E} \subseteq A^{+E'}$, and $\boundary_E A \subseteq \boundary_{E'} A$.
      \item \label{it:properties-of-interior-closure-and-boundary:neutral-element}
            Let $G_0 \in E$. Then, $A^{-E} \subseteq A \subseteq A^{+E}$.
      \item Let $G_0 \in E$ and let $A$ be finite. Then, $A^{-E}$ is finite. 
      \item \label{it:properties-of-interior-closure-and-boundary:finite}
            Let $G_0$, $A$, and $E$ be finite. Then, $A^{+E}$ and $\boundary_E A$ are finite. More precisely, $\abs{A^{+E}} \leq \abs{G_0} \cdot \abs{A} \cdot \abs{E}$. 
      \item \label{it:properties-of-interior-closure-and-boundary:commute}
            Let $g \in G$ and let $G_0 \cdot E \subseteq E$. Then, $g \leftaction A^{-E} = (g \leftaction A)^{-E}$, $g \leftaction A^{+E} = (g \leftaction A)^{+E}$, and $g \leftaction \boundary_E A = \boundary_E (g \leftaction A)$.
      \item \label{it:properties-of-interior-closure-and-boundary:commute-with-liberation}
            Let $m \in M$, let $G_0 \cdot E \subseteq E$, and let $\iota \from M \to G \quotient G_0$, $m \mapsto G_{m_0, m}$. Then, $m \rightsemiaction \iota(A^{-E}) = (m \rightsemiaction \iota(A))^{-E}$, $m \rightsemiaction \iota(A^{+E}) = (m \rightsemiaction \iota(A))^{+E}$, and $m \rightsemiaction \iota(\boundary_E A) = \boundary_E (m \rightsemiaction \iota(A))$.
    \end{enumerate}
  \end{lemma}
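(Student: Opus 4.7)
\begin{proof-idea}
Most items are set-theoretic unpackings of the representations
$A^{-E} = \bigcap_{e \in E} (\blank \rightsemiaction e)^{-1}(A)$ and
$A^{+E} = \bigcup_{e \in E} (\blank \rightsemiaction e)^{-1}(A)$,
together with the identity $m \rightsemiaction G_0 = m$. I plan to treat them in the order they are stated. Items~\ref{it:properties-of-interior-closure-and-boundary:only-neutral-element} and~\ref{it:properties-of-interior-closure-and-boundary:only-neutral-element-and-another} follow by substituting $E = \set{G_0}$ and $E = \set{G_0, e}$ into these formulas and using $(\blank \rightsemiaction G_0)^{-1}(A) = A$. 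Item~\ref{it:properties-of-interior-closure-and-boundary:complement} is obtained from De~Morgan's laws applied to the intersection/union representations. Item~\ref{it:properties-of-interior-closure-and-boundary:inclusions} is immediate: enlarging $E$ shrinks an intersection and enlarges a union. Item~\ref{it:properties-of-interior-closure-and-boundary:neutral-element} follows by combining the case $\set{G_0} \subseteq E$ of Item~\ref{it:properties-of-interior-closure-and-boundary:inclusions} with Item~\ref{it:properties-of-interior-closure-and-boundary:only-neutral-element}, and the finiteness of $A^{-E}$ under the stated hypotheses is then an immediate consequence.

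For Item~\ref{it:properties-of-interior-closure-and-boundary:finite}, I reduce the cardinality bound to the claim that, for every $e = g G_0$ in $G \quotient G_0$ and every $a \in M$, the fibre $(\blank \rightsemiaction e)^{-1}(a)$ contains at most $\abs{G_0}$ elements. Indeed, $m \rightsemiaction g G_0 = a$ unpacks to $g_{m_0, m} g \leftaction m_0 = a$, that is, $g_{m_0, m} \in G_{m_0, a} g^{-1}$; since $m \mapsto g_{m_0, m}$ is injective (its composition with $\blank \leftaction m_0$ is the identity on $M$), at most $\abs{G_{m_0, a} g^{-1}} = \abs{G_0}$ points $m$ satisfy this. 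Summing over $e \in E$ and $a \in A$ via the union representation yields $\abs{A^{+E}} \leq \abs{G_0} \cdot \abs{A} \cdot \abs{E}$, and then $\boundary_E A \subseteq A^{+E}$ is finite as well.

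For Item~\ref{it:properties-of-interior-closure-and-boundary:commute}, the key tool is the semi-commutation $(g \leftaction m) \rightsemiaction \mathfrak{g}' = g \leftaction (m \rightsemiaction g_0 \cdot \mathfrak{g}')$, where $g_0 \in G_0$ depends on $g$ and $m$ but not on $\mathfrak{g}'$. Given $m \in A^{-E}$, the hypothesis $G_0 \cdot E \subseteq E$ ensures $g_0 \cdot \mathfrak{g}' \in E$ for every $\mathfrak{g}' \in E$, hence $m \rightsemiaction g_0 \cdot \mathfrak{g}' \in A$, hence $(g \leftaction m) \rightsemiaction \mathfrak{g}' \in g \leftaction A$; this gives $g \leftaction A^{-E} \subseteq (g \leftaction A)^{-E}$, and the reverse inclusion is obtained by applying the same argument to $g^{-1}$ and the set $g \leftaction A$. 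The closure identity is proved analogously, replacing the universal quantifier over $\mathfrak{g}' \in E$ by the existential one. The boundary identity then follows from injectivity of $g \leftaction \blank$. Item~\ref{it:properties-of-interior-closure-and-boundary:commute-with-liberation} reduces to Item~\ref{it:properties-of-interior-closure-and-boundary:commute} via the bijection $\iota \from M \to G \quotient G_0$, using the identification $m \rightsemiaction \iota(\blank) = g_{m_0, m} \leftaction \blank$ recalled at the end of the preliminaries.

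The main obstacle will be the bookkeeping in Items~\ref{it:properties-of-interior-closure-and-boundary:commute} and~\ref{it:properties-of-interior-closure-and-boundary:commute-with-liberation}: one must verify that the $g_0 \in G_0$ produced by semi-commutation is uniform in $\mathfrak{g}'$, so that a single $g_0$ translates the whole of $E$ at once, and that $G_0 \cdot E \subseteq E$ really yields $g_0 \cdot E = E$ (by applying the hypothesis also to $g_0^{-1} \in G_0$), which is what makes the reverse inclusion go through.
\end{proof-idea}
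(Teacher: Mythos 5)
Your proposal is correct and follows essentially the same route as the paper's proof: the set-theoretic identities from the intersection/union representations, the fibre bound $\abs{(\blank \rightsemiaction e)^{-1}(a)} \leq \abs{G_0}$ via injectivity of $m \mapsto g_{m_0, m}$ and the coset structure of the transporter, and the semi-commutation property together with $g_0 \cdot E = E$ (which the paper likewise derives by applying $G_0 \cdot E \subseteq E$ to both $g_0$ and $g_0^{-1}$) for the equivariance items, with the last item reduced to the previous one via $m \rightsemiaction \iota(\blank) = g_{m_0,m} \leftaction \blank$. The only cosmetic differences are that the paper argues Item~\ref{it:properties-of-interior-closure-and-boundary:complement} elementwise rather than by citing De~Morgan, and proves Item~\ref{it:properties-of-interior-closure-and-boundary:commute} by a chain of equivalences from the identity $(g^{-1} \leftaction m) \rightsemiaction E = g^{-1} \leftaction (m \rightsemiaction E)$ rather than by two one-sided inclusions.
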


  \begin{proof}
    \begin{enumerate}
      \item Because $\blank \rightsemiaction G_0 = \identity_M$, this is a direct consequence of Definition~\ref{def:interior-closure-boundary}.
      \item Because $(\blank \rightsemiaction G_0)^{-1}(A) = A$, this is a direct consequence of Definition~\ref{def:interior-closure-boundary}.
      \item For each $m \in M$,
            \begin{align*}
              m \in (M \smallsetminus A)^{-E}
              &\iff m \rightsemiaction E \subseteq M \smallsetminus A\\
              &\iff (m \rightsemiaction E) \cap A = \emptyset\\
              &\iff m \in M \smallsetminus A^{+E}.
            \end{align*}
            Hence, $(M \smallsetminus A)^{-E} = M \smallsetminus A^{+E}$. Therefore,
            \begin{align*}
              (M \smallsetminus A)^{+E}
              &= M \smallsetminus (M \smallsetminus (M \smallsetminus A)^{+E})\\
              &= M \smallsetminus (M \smallsetminus (M \smallsetminus A))^{-E}\\
              &= M \smallsetminus A^{-E}.
            \end{align*}
      \item This is a direct consequence of Definition~\ref{def:interior-closure-boundary}. 
      \item This is a direct consequence of Definition~\ref{def:interior-closure-boundary}.
      \item This is a direct consequence of Item~\ref{it:properties-of-interior-closure-and-boundary:neutral-element}.
      \item Let $e \in E$ and let $a \in A$ such that $(\blank \rightsemiaction e)^{-1}(a) \neq \emptyset$. There are $m$ and $m' \in M$ such that $G_{m_0, m} = e$ and $m' \rightsemiaction e = a$. For each $m'' \in M$, we have $m'' \rightsemiaction e = g_{m_0, m''} \leftaction m$ and hence 
            \begin{align*} 
              m'' \rightsemiaction e = a
              &\iff m'' \rightsemiaction e = m' \rightsemiaction e\\
              &\iff g_{m_0, m'}^{-1} g_{m_0, m''} \leftaction m = m\\
              &\iff g_{m_0, m'}^{-1} g_{m_0, m''} \in G_m\\
              &\iff g_{m_0, m''} \in g_{m_0, m'} G_m.
            \end{align*}
            Moreover, for each $m''$ and each $m''' \in M$ with $m'' \neq m'''$, we have $g_{m_0, m''} \neq g_{m_0, m'''}$. 
            Therefore,
            \begin{align*}
              \abs{(\blank \rightsemiaction e)^{-1}(a)}
              &=    \abs{\set{m'' \in M \suchthat m'' \rightsemiaction e = a}}\\
              &=    \abs{\set{m'' \in M \suchthat g_{m_0, m''} \in g_{m_0, m'} G_m}}\\
              &\leq \abs{g_{m_0, m'} G_m}\\
              &=    \abs{G_m}\\
              &=    \abs{G_0}.
            \end{align*}
            Because $A^{+E} = \bigcup_{e \in E} \bigcup_{a \in A} (\blank \rightsemiaction e)^{-1}(a)$,
            \begin{align*}
              \abs{A^{+E}} &\leq \sum_{e \in E} \sum_{a \in A} \abs{(\blank \rightsemiaction e)^{-1}(a)}\\
                           &\leq \abs{E} \cdot \abs{A} \cdot \abs{G_0}\\
                           &<    \infty.
            \end{align*}
            Because $\boundary_E A \subseteq A^{+E}$, we also have $\abs{\boundary_E A} < \infty$.
      \item Let $m \in M$. Because $\rightsemiaction$ semi-commutes with $\leftaction$, there is a $g_0 \in G_0$ such that $(g^{-1} \leftaction m) \rightsemiaction E = g^{-1} \leftaction (m \rightsemiaction g_0 \cdot E)$. And, because $G_0 \cdot E \subseteq E$, we have $g_0 \cdot E \subseteq E$ and $g_0^{-1} \cdot E \subseteq E$; hence $E = g_0 g_0^{-1} \cdot E = g_0 \cdot (g_0^{-1} \cdot E) \subseteq g_0 \cdot E$; thus $g_0 \cdot E = E$. Therefore, $(g^{-1} \leftaction m) \rightsemiaction E = g^{-1} \leftaction (m \rightsemiaction E)$. 

            Thus, for each $m \in M$,
            \begin{align*}
              m \in g \leftaction A^{-E} &\iff \Exists m' \in A^{-E} \SuchThat g \leftaction m' = m\\
                                         &\iff g^{-1} \leftaction m \in A^{-E}\\
                                         &\iff (g^{-1} \leftaction m) \rightsemiaction E \subseteq A\\
                                         &\iff g^{-1} \leftaction (m \rightsemiaction E) \subseteq A\\
                                         &\iff m \rightsemiaction E \subseteq g \leftaction A\\
                                         &\iff m \in (g \leftaction A)^{-E}.
            \end{align*}
            In conclusion, $g \leftaction A^{-E} = (g \leftaction A)^{-E}$. Moreover, for each $m \in M$,
            \begin{align*}
              m \in g \leftaction A^{+E} &\iff g^{-1} \leftaction m \in A^{+E}\\
                                         &\iff ((g^{-1} \leftaction m) \rightsemiaction E) \cap A \neq \emptyset\\
                                         &\iff (g^{-1} \leftaction (m \rightsemiaction E)) \cap A \neq \emptyset\\
                                         &\iff (m \rightsemiaction E) \cap (g \leftaction A) \neq \emptyset\\
                                         &\iff m \in (g \leftaction A)^{+E}.
            \end{align*}
            In conclusion, $g \leftaction A^{+E} = (g \leftaction A)^{+E}$. Ultimately,
            \begin{align*}
              g \leftaction \boundary_E A &= g \leftaction (A^{+E} \smallsetminus A^{-E})\\
                                          &= (g \leftaction A^{+E}) \smallsetminus (g \leftaction A^{-E})\\
                                          &= (g \leftaction A)^{+E} \smallsetminus (g \leftaction A)^{-E}\\
                                          &= \boundary_E (g \leftaction A).
            \end{align*}
      \item According to
            Item~\ref{it:properties-of-interior-closure-and-boundary:commute},
            \begin{align*}
              m \rightsemiaction \iota(A^{-E})
              &= g_{m_0, m} \leftaction A^{-E}\\
              &= (g_{m_0, m} \leftaction A)^{-E}\\
              &= (m \rightsemiaction \iota(A))^{-E},
            \end{align*}
            and
            \begin{align*}
              m \rightsemiaction \iota(A^{+E})
              &= g_{m_0, m} \leftaction A^{+E}\\
              &= (g_{m_0, m} \leftaction A)^{+E}\\
              &= (m \rightsemiaction \iota(A))^{+E},
            \end{align*}
            and
            \begin{align*}
              m \rightsemiaction \iota(\boundary_E A)
              &= g_{m_0, m} \leftaction \boundary_E A\\
              &= \boundary_E (g_{m_0, m} \leftaction A)\\
              &= \boundary_E (m \rightsemiaction \iota(A)). \tag*{\qed}
            \end{align*}
    \end{enumerate}
  \end{proof}

  \begin{corollary} 
  \label{cor:liberation-preimage}
    Let $G_0$ be finite, let $A$ be a finite subset of $M$, and let $\mathfrak{g}$ be an element of $G \quotient G_0$. Then, $\abs{(\blank \rightsemiaction \mathfrak{g})^{-1}(A)} \leq \abs{G_0} \cdot \abs{A}$.
  \end{corollary}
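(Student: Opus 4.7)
The plan is to apply Item~\ref{it:properties-of-interior-closure-and-boundary:finite} of Lemma~\ref{lem:properties-of-interior-closure-and-boundary} to the singleton set $E = \set{\mathfrak{g}}$. The key observation is that the preimage $(\blank \rightsemiaction \mathfrak{g})^{-1}(A)$ coincides with the $\set{\mathfrak{g}}$-closure of $A$: indeed, unpacking the second characterisation in Definition~\ref{def:interior-closure-boundary}, we have
\begin{equation*}
  A^{+\set{\mathfrak{g}}} = \bigcup_{a \in A} (\blank \rightsemiaction \mathfrak{g})^{-1}(a) = (\blank \rightsemiaction \mathfrak{g})^{-1}(A).
\end{equation*}

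Given this identification, the stated bound is immediate: since $G_0$, $A$, and $\set{\mathfrak{g}}$ are all finite, the last part of Item~\ref{it:properties-of-interior-closure-and-boundary:finite} yields
\begin{equation*}
  \abs{(\blank \rightsemiaction \mathfrak{g})^{-1}(A)} = \abs{A^{+\set{\mathfrak{g}}}} \leq \abs{G_0} \cdot \abs{A} \cdot \abs{\set{\mathfrak{g}}} = \abs{G_0} \cdot \abs{A}.
\end{equation*}

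There is essentially no obstacle here; the corollary is a direct specialisation of the lemma. The only point worth checking is that Item~\ref{it:properties-of-interior-closure-and-boundary:finite} does not secretly require $G_0 \in E$ (which would fail for a singleton $E = \set{\mathfrak{g}}$ with $\mathfrak{g} \neq G_0$), and indeed inspecting the statement it does not: the finite cardinality bound holds for any finite $E$, since the proof of that item only uses that each fibre $(\blank \rightsemiaction e)^{-1}(a)$ has cardinality at most $\abs{G_0}$ and then sums over $e \in E$ and $a \in A$.
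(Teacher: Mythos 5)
Your proposal is correct and follows exactly the paper's intended argument: the paper likewise derives the bound by identifying $(\blank \rightsemiaction \mathfrak{g})^{-1}(A)$ with $A^{+\set{\mathfrak{g}}}$ via the parenthetical formula in Definition~\ref{def:interior-closure-boundary} and then invoking the cardinality estimate of Item~\ref{it:properties-of-interior-closure-and-boundary:finite} of Lemma~\ref{lem:properties-of-interior-closure-and-boundary} with $\abs{E} = 1$. Your remark that the cited item does not require $G_0 \in E$ is a correct and worthwhile check.
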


  \begin{proof}
    This is a direct consequence of Definition~\ref{def:interior-closure-boundary} and Item~\ref{it:properties-of-interior-closure-and-boundary:finite} of Lemma~\ref{lem:properties-of-interior-closure-and-boundary}. \qed
  \end{proof}

  The restriction $\Delta_{X, A}^-$ of $\Delta$ given in Lemma~\ref{lem:Delta-X-A-minus-plus-are-surjective} is well-defined according to the next lemma, which itself holds due to the locality of $\Delta$.

  \begin{lemma} 
  \label{lem:global-transition-function-and-interior-closure}
    Let $\mathcal{C} = \ntuple{\mathcal{R}, Q, N, \delta}$ be a semi-cellular automaton, let $\Delta$ be the global transition function of $\mathcal{C}$, let $c$ and $c'$ be two global configurations of $\mathcal{C}$, and let $A$ be a subset of $M$.
            If $c\restriction_A = c'\restriction_A$, then $\Delta(c)\restriction_{A^{-N}} = \Delta(c')\restriction_{A^{-N}}$.
  \end{lemma}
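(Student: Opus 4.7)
The plan is to unravel the definition of $\Delta$ pointwise on $A^{-N}$ and observe that the hypothesis $c\restriction_A = c'\restriction_A$ supplies exactly the data needed to evaluate the local rule. Concretely, I would fix an arbitrary point $m \in A^{-N}$ and argue that $\Delta(c)(m) = \Delta(c')(m)$; since $m$ was arbitrary, this yields the equality of restrictions to $A^{-N}$.

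First, by Definition~\ref{def:interior-closure-boundary}, membership $m \in A^{-N}$ means precisely $m \rightsemiaction N \subseteq A$. Next, by the definition of the global transition function,
\begin{equation*}
  \Delta(c)(m) = \delta(n \mapsto c(m \rightsemiaction n)), \qquad \Delta(c')(m) = \delta(n \mapsto c'(m \rightsemiaction n)).
\end{equation*}
For every $n \in N$ the point $m \rightsemiaction n$ lies in $A$ by the previous observation, so the assumption $c\restriction_A = c'\restriction_A$ gives $c(m \rightsemiaction n) = c'(m \rightsemiaction n)$. Hence the two local configurations $n \mapsto c(m \rightsemiaction n)$ and $n \mapsto c'(m \rightsemiaction n)$ in $Q^N$ are equal as functions, and feeding them through $\delta$ produces the same element of $Q$.

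There is no genuine obstacle here: the statement is simply the locality principle for $\Delta$ packaged in terms of the $N$-interior, and the only step to get right is the unfolding of $A^{-N}$ so that the arguments $m \rightsemiaction n$ of $c$ and $c'$ are guaranteed to lie in the set $A$ on which the two configurations agree. No use of $\bullet$-invariance of $\delta$, of finiteness of $G_0$, or of amenability is required; the argument works for an arbitrary semi-cellular automaton and an arbitrary subset $A \subseteq M$.
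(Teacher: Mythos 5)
Your proof is correct and follows exactly the same route as the paper's: fix $m \in A^{-N}$, unfold $m \rightsemiaction N \subseteq A$, and conclude that the local configurations $n \mapsto c(m \rightsemiaction n)$ and $n \mapsto c'(m \rightsemiaction n)$ coincide, so $\delta$ yields the same value. You merely spell out the evaluation of $\delta$ a little more explicitly than the paper does.
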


  \begin{proof}
            Let $c\restriction_A = c'\restriction_A$. Furthermore, let $m \in A^{-N}$. Then, $m \rightsemiaction N \subseteq A$. Hence, $\Delta(c)(m) = \Delta(c')(m)$. \qed
  \end{proof}

  \begin{lemma} 
  \label{lem:Delta-X-A-minus-plus-are-surjective}
    Let $\mathcal{C} = \ntuple{\mathcal{R}, Q, N, \delta}$ be a semi-cellular automaton, let $\Delta$ be the global transition function of $\mathcal{C}$, let $X$ be a subset of $Q^M$, and let $A$ be a subset of $M$. The map
            \begin{align*}
              \Delta_{X, A}^- \from \pi_A(X) &\to     \pi_{A^{-N}}(\Delta(X)),\\
                                           p &\mapsto \Delta(c)\restriction_{A^{-N}}, \text{ where } c \in X \text{ such that } c\restriction_A = p,
            \end{align*}
            is surjective. The map $\Delta_{Q^M, A}^-$ is denoted by $\Delta_A^-$.
%
  \end{lemma}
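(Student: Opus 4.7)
The plan is to split the proof into two parts: well-definedness of the map $\Delta_{X, A}^-$ and its surjectivity. Both are routine once Lemma~\ref{lem:global-transition-function-and-interior-closure} is in hand; indeed, the author's remark preceding the statement already flags that this prior lemma carries the real content.

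For well-definedness, I would observe that for $p \in \pi_A(X)$ there may be many $c \in X$ with $c\restriction_A = p$, so we must check that the prescribed value $\Delta(c)\restriction_{A^{-N}}$ does not depend on the choice. Given two such $c, c' \in X$, we have $c\restriction_A = p = c'\restriction_A$, and Lemma~\ref{lem:global-transition-function-and-interior-closure} yields $\Delta(c)\restriction_{A^{-N}} = \Delta(c')\restriction_{A^{-N}}$. Hence the value $\Delta_{X, A}^-(p)$ is unambiguous, and it belongs to $\pi_{A^{-N}}(\Delta(X))$ by construction.

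For surjectivity, I would unwind the definition of $\pi_{A^{-N}}(\Delta(X))$. Let $q \in \pi_{A^{-N}}(\Delta(X))$. Then there is some $c \in X$ with $\Delta(c)\restriction_{A^{-N}} = q$. Set $p := c\restriction_A$; this belongs to $\pi_A(X)$, and by the definition of $\Delta_{X, A}^-$ together with well-definedness already established, we have $\Delta_{X, A}^-(p) = \Delta(c)\restriction_{A^{-N}} = q$.

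There is no genuine obstacle here: the only substantive input is the locality property encapsulated by Lemma~\ref{lem:global-transition-function-and-interior-closure}, which says that the values of $\Delta(c)$ on $A^{-N}$ are determined by the values of $c$ on $A$ because for each $m \in A^{-N}$ we have $m \rightsemiaction N \subseteq A$. Once that is invoked, the proof is essentially a one-line diagram chase.
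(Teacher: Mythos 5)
Your proposal is correct and follows the same route as the paper: the surjectivity argument (pick $c \in X$ with $\Delta(c)\restriction_{A^{-N}} = q$ and take $p = c\restriction_A$) is exactly the paper's proof, and the well-definedness via Lemma~\ref{lem:global-transition-function-and-interior-closure} is precisely what the paper records in the remark preceding the lemma. Nothing is missing.
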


  \begin{proof}
            Let $p' \in \pi_{A^{-N}}(\Delta(X))$. Then, there is a $c' \in \Delta(X)$ such that $c'\restriction_{A^{-N}} = p'$. Moreover, there is a $c \in X$ such that $\Delta(c) = c'$. Put $p = c\restriction_A \in \pi_A(X)$. Then, $\Delta_{X, A}^-(p) = \Delta(c)\restriction_{A^{-N}} = c'\restriction_{A^{-N}} = p'$. Hence, $\Delta_{X, A}^-$ is surjective. \qed
  \end{proof}

  In the proof of Theorem~\ref{thm:boundary-characterisation-of-folner-net}, the upper bound given in Lemma~\ref{lem:cardinality-of-inverse-image-of-liberation-minus-the-same-less-than-or-equal-to-whatever} is essential, which itself follows from the upper bound given in Corollary~\ref{cor:liberation-preimage} and the inclusion given in Lemma~\ref{lem:liberation-by-n-yields-element-in-set-setminus-bigcup-liberation-by-inverse-times-n-prime}, which in turn follows from the equality given in Lemma~\ref{lem:rightsemiaction-can-be-undone}.

  \begin{lemma}
  \label{lem:rightsemiaction-can-be-undone}
    Let $m$ be an element of $M$, and let $\mathfrak{g}$ be an element of $G \quotient G_0$. There is an element $g \in \mathfrak{g}$ such that
    \begin{equation*}
      \ForEach \mathfrak{g}' \in G \quotient G_0 \Holds (m \rightsemiaction \mathfrak{g}) \rightsemiaction \mathfrak{g}' = m \rightsemiaction g \cdot \mathfrak{g}',
    \end{equation*}
    in particular, for said $g \in \mathfrak{g}$, 
      we have $(m \rightsemiaction \mathfrak{g}) \rightsemiaction g^{-1} G_0 = m$.
  \end{lemma}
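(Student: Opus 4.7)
My plan is to treat this as a clean reindexing of the defect condition for $\rightsemiaction$. First I would pick any representative $h \in G$ of the coset $\mathfrak{g}$, so that $\mathfrak{g} = h G_0$. Applying the defect axiom of $\rightsemiaction$ at the pair $(m, h)$ yields some $g_0 \in G_0$ for which $m \rightsemiaction h \cdot \mathfrak{g}' = (m \rightsemiaction \mathfrak{g}) \rightsemiaction g_0 \cdot \mathfrak{g}'$ holds for every $\mathfrak{g}' \in G \quotient G_0$. Crucially this $g_0$ depends only on $m$ and $h$, not on $\mathfrak{g}'$; that uniformity is the one structural input the proof relies on.

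Next I would reindex. Since $g_0 \in G_0 \subseteq G$, left multiplication by $g_0$ is a bijection of $G \quotient G_0$ with inverse given by left multiplication by $g_0^{-1}$. Performing the substitution $\mathfrak{g}' \mapsto g_0^{-1} \cdot \mathfrak{g}'$ in the defect identity turns it into $(m \rightsemiaction \mathfrak{g}) \rightsemiaction \mathfrak{g}' = m \rightsemiaction (h g_0^{-1}) \cdot \mathfrak{g}'$. Setting $g := h g_0^{-1}$, the element $g$ lies in $h G_0 = \mathfrak{g}$ because $g_0 \in G_0$, and the first claim is proved.

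For the \enquote{in particular} clause I would simply specialise $\mathfrak{g}' = g^{-1} G_0$ in the first claim. Then $g \cdot g^{-1} G_0 = G_0$, and the first semi-action axiom $m \rightsemiaction G_0 = m$ collapses the right-hand side to $m$, yielding $(m \rightsemiaction \mathfrak{g}) \rightsemiaction g^{-1} G_0 = m$.

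I do not expect a genuine obstacle here; the only subtle point is noticing that the $g_0$ granted by the defect axiom is uniform in $\mathfrak{g}'$, which is precisely what legitimises the substitution and lets a single representative $g \in \mathfrak{g}$ serve simultaneously for every $\mathfrak{g}' \in G \quotient G_0$.
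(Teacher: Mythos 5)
Your proof is correct and follows essentially the same route as the paper's: pick a representative $h$ of $\mathfrak{g}$, invoke the defect axiom to obtain a uniform $g_0 \in G_0$, substitute $\mathfrak{g}' \mapsto g_0^{-1} \cdot \mathfrak{g}'$, and take $g = h g_0^{-1} \in \mathfrak{g}$, with the \enquote{in particular} clause obtained by specialising $\mathfrak{g}' = g^{-1} G_0$ and using $m' \rightsemiaction G_0 = m'$. No issues.
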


  \begin{proof}
    There is a $g \in G$ such, that $g G_0 = \mathfrak{g}$. Moreover, because $\rightsemiaction$ is a semi-action with defect $G_0$, there is a $g_0 \in G_0$ such, that
    \begin{equation*} 
      \ForEach \mathfrak{g}' \in G \quotient G_0 \Holds (m \rightsemiaction g G_0) \rightsemiaction \mathfrak{g}' = m \rightsemiaction g \cdot (g_0^{-1} \cdot \mathfrak{g}').
    \end{equation*}
    Because $g \cdot (g_0^{-1} \cdot \mathfrak{g}') = g g_0^{-1} \cdot \mathfrak{g}'$ and $g g_0^{-1} \in \mathfrak{g}$, the statement holds. \qed
  \end{proof}

  \begin{lemma}
  \label{lem:liberation-by-n-yields-element-in-set-setminus-bigcup-liberation-by-inverse-times-n-prime}
    Let $A$ and $A'$ be two subsets of $M$, and let $\mathfrak{g}$ and $\mathfrak{g}'$ be two elements of $G \quotient G_0$. Then, for each element $m \in (\blank \rightsemiaction \mathfrak{g})^{-1}(A) \smallsetminus (\blank \rightsemiaction \mathfrak{g}')^{-1}(A')$,
    \begin{gather*}
      m \rightsemiaction \mathfrak{g} \in \bigcup_{g \in \mathfrak{g}} A \smallsetminus (\blank \rightsemiaction g^{-1} \cdot \mathfrak{g}')^{-1}(A'),\\
      m \rightsemiaction \mathfrak{g}' \in \bigcup_{g' \in \mathfrak{g}'} (\blank \rightsemiaction (g')^{-1} \cdot \mathfrak{g})^{-1}(A) \smallsetminus A'.
    \end{gather*}
  \end{lemma}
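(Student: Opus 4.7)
The plan is to apply Lemma~\ref{lem:rightsemiaction-can-be-undone} twice, once for each of the two inclusions. The hypothesis $m \in (\blank \rightsemiaction \mathfrak{g})^{-1}(A) \smallsetminus (\blank \rightsemiaction \mathfrak{g}')^{-1}(A')$ unpacks to the two simple facts $m \rightsemiaction \mathfrak{g} \in A$ and $m \rightsemiaction \mathfrak{g}' \notin A'$, and the point of the lemma is to transfer these facts from $m$ to the points $m \rightsemiaction \mathfrak{g}$ and $m \rightsemiaction \mathfrak{g}'$ respectively. Each inclusion asserts membership in a union indexed by a coset; Lemma~\ref{lem:rightsemiaction-can-be-undone} is precisely the tool that selects a representative of that coset which undoes the previous semi-action.

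For the first inclusion, I would invoke Lemma~\ref{lem:rightsemiaction-can-be-undone} at the pair $(m, \mathfrak{g})$ to obtain $g \in \mathfrak{g}$ such that $(m \rightsemiaction \mathfrak{g}) \rightsemiaction \mathfrak{g}'' = m \rightsemiaction g \cdot \mathfrak{g}''$ for all $\mathfrak{g}'' \in G \quotient G_0$. Specialising to $\mathfrak{g}'' = g^{-1} \cdot \mathfrak{g}'$ yields $(m \rightsemiaction \mathfrak{g}) \rightsemiaction (g^{-1} \cdot \mathfrak{g}') = m \rightsemiaction \mathfrak{g}'$, which is not in $A'$ by hypothesis, so $m \rightsemiaction \mathfrak{g} \notin (\blank \rightsemiaction g^{-1} \cdot \mathfrak{g}')^{-1}(A')$. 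Combined with $m \rightsemiaction \mathfrak{g} \in A$, this places $m \rightsemiaction \mathfrak{g}$ in the term of the union indexed by this particular $g$.

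For the second inclusion, the argument is symmetric: apply Lemma~\ref{lem:rightsemiaction-can-be-undone} at the pair $(m, \mathfrak{g}')$ to obtain $g' \in \mathfrak{g}'$ with $(m \rightsemiaction \mathfrak{g}') \rightsemiaction \mathfrak{g}'' = m \rightsemiaction g' \cdot \mathfrak{g}''$, and evaluate at $\mathfrak{g}'' = (g')^{-1} \cdot \mathfrak{g}$ to get $(m \rightsemiaction \mathfrak{g}') \rightsemiaction ((g')^{-1} \cdot \mathfrak{g}) = m \rightsemiaction \mathfrak{g} \in A$, hence $m \rightsemiaction \mathfrak{g}' \in (\blank \rightsemiaction (g')^{-1} \cdot \mathfrak{g})^{-1}(A)$. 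Since $m \rightsemiaction \mathfrak{g}' \notin A'$, this point lies in the corresponding term of the second union.

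There is no genuine obstacle here; the only mild subtlety is bookkeeping with the cosets, in particular observing that the $g$ and $g'$ produced by Lemma~\ref{lem:rightsemiaction-can-be-undone} depend on $m$ and the chosen cosets, so the correct way to express the result is as a union ranging over all representatives, exactly as stated.
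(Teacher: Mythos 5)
Your proof is correct and follows exactly the paper's own argument: unpack the hypothesis into $m \rightsemiaction \mathfrak{g} \in A$ and $m \rightsemiaction \mathfrak{g}' \notin A'$, then apply Lemma~\ref{lem:rightsemiaction-can-be-undone} once at $(m,\mathfrak{g})$ and once at $(m,\mathfrak{g}')$ to produce the coset representatives $g$ and $g'$ witnessing membership in the respective unions. No differences worth noting.
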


  \begin{proof}
    Let $m \in (\blank \rightsemiaction \mathfrak{g})^{-1}(A) \smallsetminus (\blank \rightsemiaction \mathfrak{g}')^{-1}(A')$. Then, $m \rightsemiaction \mathfrak{g} \in A$ and $m \rightsemiaction \mathfrak{g}' \notin A'$. According to Lemma~\ref{lem:rightsemiaction-can-be-undone}, there is a $g \in \mathfrak{g}$ and a $g' \in \mathfrak{g}'$ such that $(m \rightsemiaction \mathfrak{g}) \rightsemiaction g^{-1} \cdot \mathfrak{g}' = m \rightsemiaction \mathfrak{g}' \notin A'$ and $(m \rightsemiaction \mathfrak{g}') \rightsemiaction (g')^{-1} \cdot \mathfrak{g} = m \rightsemiaction \mathfrak{g} \in A$. Hence, $m \rightsemiaction \mathfrak{g} \notin (\blank \rightsemiaction g^{-1} \cdot \mathfrak{g}')^{-1}(A')$ and $m \rightsemiaction \mathfrak{g}' \in (\blank \rightsemiaction (g')^{-1} \cdot \mathfrak{g})^{-1}(A)$. Therefore, $m \rightsemiaction \mathfrak{g} \in A \smallsetminus (\blank \rightsemiaction g^{-1} \cdot \mathfrak{g}')^{-1}(A')$ and $m \rightsemiaction \mathfrak{g}' \in (\blank \rightsemiaction (g')^{-1} \cdot \mathfrak{g})^{-1}(A) \smallsetminus A'$. In conclusion, $m \rightsemiaction \mathfrak{g} \in \bigcup_{g \in \mathfrak{g}} A \smallsetminus (\blank \rightsemiaction g^{-1} \cdot \mathfrak{g}')^{-1}(A')$ and $m \rightsemiaction \mathfrak{g}' \in \bigcup_{g' \in \mathfrak{g}'} (\blank \rightsemiaction (g')^{-1} \cdot \mathfrak{g})^{-1}(A) \smallsetminus A'$. \qed
  \end{proof}

  \begin{lemma} 
  \label{lem:cardinality-of-inverse-image-of-liberation-minus-the-same-less-than-or-equal-to-whatever}
    Let $G_0$ be finite, let $F$ and $F'$ be two finite subsets of $M$, and let $\mathfrak{g}$ and $\mathfrak{g}'$ be two elements of $G \quotient G_0$. Then,
    \begin{equation*}
      \abs{(\blank \rightsemiaction \mathfrak{g})^{-1}(F) \smallsetminus (\blank \rightsemiaction \mathfrak{g}')^{-1}(F')}
      \leq
      \begin{dcases*} 
        \abs{G_0}^2 \cdot \max_{g \in \mathfrak{g}} \abs{F \smallsetminus (\blank \rightsemiaction g^{-1} \cdot \mathfrak{g}')^{-1}(F')},\\
        \abs{G_0}^2 \cdot \max_{g' \in \mathfrak{g}'} \abs{(\blank \rightsemiaction (g')^{-1} \cdot \mathfrak{g})^{-1}(F) \smallsetminus F'}.
      \end{dcases*}
    \end{equation*} 
  \end{lemma}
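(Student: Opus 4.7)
The plan is to combine Lemma~\ref{lem:liberation-by-n-yields-element-in-set-setminus-bigcup-liberation-by-inverse-times-n-prime} (which pushes the difference forward under $\blank \rightsemiaction \mathfrak{g}$ or $\blank \rightsemiaction \mathfrak{g}'$ into a small set) with Corollary~\ref{cor:liberation-preimage} (which controls how much cardinality is lost when we pull the bound back through $(\blank \rightsemiaction \mathfrak{g})^{-1}$ or $(\blank \rightsemiaction \mathfrak{g}')^{-1}$). Each of the two factors of $\abs{G_0}$ in the final bound comes from one of these two steps. I describe the first inequality in detail; the second is structurally identical with the roles of $\mathfrak{g}, F$ and $\mathfrak{g}', F'$ exchanged.

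First I would set $S = (\blank \rightsemiaction \mathfrak{g})^{-1}(F) \smallsetminus (\blank \rightsemiaction \mathfrak{g}')^{-1}(F')$ and $T = \bigcup_{g \in \mathfrak{g}} \bigl( F \smallsetminus (\blank \rightsemiaction g^{-1} \cdot \mathfrak{g}')^{-1}(F') \bigr)$. By the first inclusion of Lemma~\ref{lem:liberation-by-n-yields-element-in-set-setminus-bigcup-liberation-by-inverse-times-n-prime}, every $m \in S$ satisfies $m \rightsemiaction \mathfrak{g} \in T$, and hence $S \subseteq (\blank \rightsemiaction \mathfrak{g})^{-1}(T)$. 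Since $\mathfrak{g}$ is a left coset of $G_0$ it has exactly $\abs{G_0}$ elements, and since each term of the union defining $T$ lies in $F$, a trivial union bound yields
\begin{equation*}
  \abs{T} \leq \abs{G_0} \cdot \max_{g \in \mathfrak{g}} \abs{F \smallsetminus (\blank \rightsemiaction g^{-1} \cdot \mathfrak{g}')^{-1}(F')},
\end{equation*}
and in particular $T$ is finite (being contained in the finite set $F$). Applying Corollary~\ref{cor:liberation-preimage} to $T$ gives $\abs{(\blank \rightsemiaction \mathfrak{g})^{-1}(T)} \leq \abs{G_0} \cdot \abs{T}$, and chaining these estimates via $\abs{S} \leq \abs{(\blank \rightsemiaction \mathfrak{g})^{-1}(T)}$ produces the first bound with the advertised factor $\abs{G_0}^2$.

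For the second bound I would use the second inclusion of Lemma~\ref{lem:liberation-by-n-yields-element-in-set-setminus-bigcup-liberation-by-inverse-times-n-prime} to obtain $S \subseteq (\blank \rightsemiaction \mathfrak{g}')^{-1}(T')$ for the set $T' = \bigcup_{g' \in \mathfrak{g}'} \bigl( (\blank \rightsemiaction (g')^{-1} \cdot \mathfrak{g})^{-1}(F) \smallsetminus F' \bigr)$, note that $T'$ is finite because each of its $\abs{G_0}$ union terms is finite by Corollary~\ref{cor:liberation-preimage} applied to $F$, and conclude in exactly the same manner. I do not anticipate any real obstacle: both ingredient results are already in place, and the only substantive observation is that $\mathfrak{g}$ (respectively $\mathfrak{g}'$) contributes exactly $\abs{G_0}$ summands to the union, which is what converts one factor of $\abs{G_0}$ into $\abs{G_0}^2$.
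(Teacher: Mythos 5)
Your proposal is correct and follows essentially the same route as the paper: both use Lemma~\ref{lem:liberation-by-n-yields-element-in-set-setminus-bigcup-liberation-by-inverse-times-n-prime} to show the difference set maps into the union $\bigcup_{g \in \mathfrak{g}} B_g$ (respectively $\bigcup_{g' \in \mathfrak{g}'} B'_{g'}$), then combine the fibre bound from Corollary~\ref{cor:liberation-preimage} with $\abs{\mathfrak{g}} = \abs{G_0}$ to extract the factor $\abs{G_0}^2$. The only cosmetic difference is that you apply the corollary to the whole target set $T$ at once, whereas the paper bounds the fibres over single points and then sums; the computation is identical.
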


  \begin{proof}
    Put $A = (\blank \rightsemiaction \mathfrak{g})^{-1}(F) \smallsetminus (\blank \rightsemiaction \mathfrak{g}')^{-1}(F')$. For each $g \in \mathfrak{g}$, put $B_g = F \smallsetminus (\blank \rightsemiaction g^{-1} \cdot \mathfrak{g}')^{-1}(F')$. For each $g' \in \mathfrak{g}'$, put $B_{g'}' = (\blank \rightsemiaction (g')^{-1} \cdot \mathfrak{g})^{-1}(F) \smallsetminus F'$.

    According to Lemma~\ref{lem:liberation-by-n-yields-element-in-set-setminus-bigcup-liberation-by-inverse-times-n-prime}, the restrictions $(\blank \rightsemiaction \mathfrak{g})\restriction_{A \to \bigcup_{g \in \mathfrak{g}} B_g}$ and $(\blank \rightsemiaction \mathfrak{g}')\restriction_{A \to \bigcup_{g' \in \mathfrak{g}'} B_{g'}'}$ are well-defined. Moreover, for each $m \in M$, according to Corollary~\ref{cor:liberation-preimage}, we have $\abs{(\blank \rightsemiaction \mathfrak{g})^{-1}(m)} \leq \abs{G_0}$ and $\abs{(\blank \rightsemiaction \mathfrak{g}')^{-1}(m)} \leq \abs{G_0}$. Therefore, because $\abs{\mathfrak{g}} = \abs{G_0}$, 
    \begin{equation*}
      \abs{A}
      \leq \abs{G_0} \cdot \abs{\bigcup_{g \in \mathfrak{g}} B_g}
      \leq \abs{G_0} \cdot \sum_{g \in \mathfrak{g}} \abs{B_g}
      \leq \abs{G_0}^2 \cdot \max_{g \in \mathfrak{g}} \abs{B_g}
    \end{equation*}
    and analogously
    \begin{equation*}
      \abs{A} \leq \abs{G_0}^2 \cdot \max_{g' \in \mathfrak{g}'} \abs{B_{g'}'}. \tag*{\qed}
    \end{equation*}
  \end{proof}

%

  \begin{theorem} 
  \label{thm:boundary-characterisation-of-folner-net} 
    Let $G_0$ be finite and let $\net{F_i}_{i \in I}$ be a net in $\set{F \subseteq M \suchthat F \neq \emptyset, F \text{ finite}}$ indexed by $(I, \leq)$. The net $\net{F_i}_{i \in I}$ is a right Følner net in $\mathcal{R}$ if and only if 
    \begin{equation*}
      \ForEach E \subseteq G \quotient G_0 \text{ finite} \Holds \lim_{i \in I} \frac{\abs{\boundary_E F_i}}{\abs{F_i}} = 0.
    \end{equation*}
  \end{theorem}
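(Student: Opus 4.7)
The plan is to prove the two implications separately, with the easy direction relying on Item~\ref{it:properties-of-interior-closure-and-boundary:only-neutral-element-and-another} of Lemma~\ref{lem:properties-of-interior-closure-and-boundary} and the harder direction on the decomposition of the $E$-boundary into pairwise differences controlled by Lemma~\ref{lem:cardinality-of-inverse-image-of-liberation-minus-the-same-less-than-or-equal-to-whatever}.

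For the ``if'' direction I would assume the asymptotic vanishing of all finite $E$-boundaries and specialise to $E = \{G_0, \mathfrak{g}\}$ for an arbitrary $\mathfrak{g} \in G \quotient G_0$. By Item~\ref{it:properties-of-interior-closure-and-boundary:only-neutral-element-and-another} of Lemma~\ref{lem:properties-of-interior-closure-and-boundary}, the set $F_i \smallsetminus (\blank \rightsemiaction \mathfrak{g})^{-1}(F_i)$ is contained in $\boundary_{\{G_0, \mathfrak{g}\}} F_i$, so the hypothesis immediately yields $\lim_i \abs{F_i \smallsetminus (\blank \rightsemiaction \mathfrak{g})^{-1}(F_i)}/\abs{F_i} = 0$.

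For the ``only if'' direction I would fix a finite $E \subseteq G \quotient G_0$ and use the set-theoretic identity
\begin{equation*}
  \boundary_E F_i = \bigcup_{e \in E} (\blank \rightsemiaction e)^{-1}(F_i) \,\Big\backslash\, \bigcap_{e' \in E} (\blank \rightsemiaction e')^{-1}(F_i) = \bigcup_{e, e' \in E} \bigl[(\blank \rightsemiaction e)^{-1}(F_i) \smallsetminus (\blank \rightsemiaction e')^{-1}(F_i)\bigr],
\end{equation*}
which reduces the problem to bounding each of the $\abs{E}^2$ differences. Lemma~\ref{lem:cardinality-of-inverse-image-of-liberation-minus-the-same-less-than-or-equal-to-whatever} (applied with $F = F' = F_i$) bounds the $(e,e')$-difference by $\abs{G_0}^2 \cdot \max_{g \in e} \abs{F_i \smallsetminus (\blank \rightsemiaction g^{-1} \cdot e')^{-1}(F_i)}$. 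Since $e$ contains exactly $\abs{G_0}$ elements, the maximum ranges over a finite collection of cosets in $G \quotient G_0$, and the right Følner hypothesis guarantees that each summand, divided by $\abs{F_i}$, tends to $0$; taking the maximum and then summing over the finitely many pairs $(e, e') \in E \times E$ preserves this convergence, so $\abs{\boundary_E F_i}/\abs{F_i} \to 0$.

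The main obstacle is the ``only if'' direction, specifically recognising that the natural decomposition of $\boundary_E F$ into pairwise differences is the right bridge to Lemma~\ref{lem:cardinality-of-inverse-image-of-liberation-minus-the-same-less-than-or-equal-to-whatever}, and then noticing that the $\max_{g \in e}$ in that lemma is harmless because it ranges over only $\abs{G_0}$ cosets, so finitely many applications of the Følner hypothesis for single cosets suffice. Once this observation is in place, the estimate is a routine chain of inequalities with no further subtleties.
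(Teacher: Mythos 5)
Your proposal is correct and follows essentially the same route as the paper: the ``only if'' direction uses the identical decomposition $\boundary_E F_i = \bigcup_{e, e' \in E} \bigl[(\blank \rightsemiaction e)^{-1}(F_i) \smallsetminus (\blank \rightsemiaction e')^{-1}(F_i)\bigr]$ together with Lemma~\ref{lem:cardinality-of-inverse-image-of-liberation-minus-the-same-less-than-or-equal-to-whatever} and the finiteness of $\set{g^{-1} \cdot e' \suchthat e, e' \in E, g \in e}$, and the ``if'' direction specialises to $E = \set{G_0, \mathfrak{g}}$ via Item~\ref{it:properties-of-interior-closure-and-boundary:only-neutral-element-and-another} of Lemma~\ref{lem:properties-of-interior-closure-and-boundary}, exactly as in the paper.
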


  \begin{proof}
    First, let $\net{F_i}_{i \in I}$ be a right Følner net in $\mathcal{R}$. Furthermore, let $E \subseteq G \quotient G_0$ be finite. Moreover, let $i \in I$. For each $e \in E$ and each $e' \in E$, put $A_{i,e,e'} = (\blank \rightsemiaction e)^{-1}(F_i) \smallsetminus (\blank \rightsemiaction e')^{-1}(F_i)$. For each $\mathfrak{g} \in G \quotient G_0$, put $B_{i,\mathfrak{g}} = F_i \smallsetminus (\blank \rightsemiaction \mathfrak{g})^{-1}(F_i)$. 
    According to Definition~\ref{def:interior-closure-boundary}, 
    \begin{multline*}
      \boundary_E F_i 
                      = \parens[\Big]{\bigcup_{e \in E} (\blank \rightsemiaction e)^{-1}(F_i)} \smallsetminus \parens[\Big]{\bigcap_{e' \in E} (\blank \rightsemiaction e')^{-1}(F_i)}\\
                      = \bigcup_{e, e' \in E} (\blank \rightsemiaction e)^{-1}(F_i) \smallsetminus (\blank \rightsemiaction e')^{-1}(F_i)
                      = \bigcup_{e, e' \in E} A_{i,e,e'}.
    \end{multline*}
    Hence, $\abs{\boundary_E F_i} \leq \sum_{e, e' \in E} \abs{A_{i,e,e'}}$.

    According to Lemma~\ref{lem:cardinality-of-inverse-image-of-liberation-minus-the-same-less-than-or-equal-to-whatever}, we have $\abs{A_{i,e,e'}} \leq \abs{G_0}^2 \cdot \max_{g \in e} B_{i, g^{-1} \cdot e'}$. Put $E' = \set{g^{-1} \cdot e' \suchthat e, e' \in E, g \in e}$. Because $E$ is finite, $G_0$ is finite, and, for each $e \in E$, we have $\abs{e} = \abs{G_0}$, the set $E'$ is finite. Therefore, 
    \begin{multline*} 
      \frac{\abs{\boundary_E F_i}}{\abs{F_i}}
      \leq \frac{1}{\abs{F_i}} \sum_{e, e' \in E} \abs{A_{i,e,e'}}
      \leq \frac{\abs{G_0}^2}{\abs{F_i}} \sum_{e, e' \in E} \max_{g \in e} \abs{B_{i, g^{-1} \cdot e'}}\\
      \leq \frac{\abs{G_0}^2 \cdot \abs{E}^2}{\abs{F_i}} \max_{e' \in E'} \abs{B_{i, e'}}
      \leq \abs{G_0}^2 \cdot \abs{E}^2 \cdot \max_{e' \in E'} \frac{\abs{F_i \smallsetminus (\blank \rightsemiaction e')^{-1}(F_i)}}{\abs{F_i}} 
      \underset{i \in I}{\to} 0.
    \end{multline*}
    In conclusion, $\lim_{i \in I} \frac{\abs{\boundary_E F_i}}{\abs{F_i}} = 0$.

    Secondly, for each finite $E \subseteq G \quotient G_0$, let $\lim_{i \in I} \frac{\abs{\boundary_E F_i}}{\abs{F_i}} = 0$. Furthermore, let $i \in I$, let $e \in G \quotient G_0$, and put $E = \set{G_0, e}$. According to Item~\ref{it:properties-of-interior-closure-and-boundary:only-neutral-element-and-another} of Lemma~\ref{lem:properties-of-interior-closure-and-boundary}, we have $F_i \smallsetminus (\blank \rightsemiaction e)^{-1}(F_i) \subseteq \boundary_E F_i$. Therefore,
    \begin{equation*}
      \frac{\abs{F_i \smallsetminus (\blank \rightsemiaction e)^{-1}(F_i)}}{\abs{F_i}}
      \leq \frac{\abs{\boundary_E F_i}}{\abs{F_i}}
      \underset{i \in I}{\to} 0.
    \end{equation*}
    In conclusion, $\net{F_i}_{i \in I}$ is a right Følner net in $\mathcal{R}$. \qed
  \end{proof}

  \section{Tilings} 
  \label{sec:tilings}

  In this section, let $\mathcal{R} = \ntuple{\ntuple{M, G, \leftaction}, \ntuple{m_0, \family{g_{m_0, m}}_{m \in M}}}$ be a cell space.

  In Definition~\ref{def:tiling} we introduce the notion of $\ntuple{E,E'}$-tilings. In Theorem~\ref{thm:existence-of-tiling} we show using Zorn's lemma that, for each subset $E$ of $G \quotient G_0$, there is an $\ntuple{E,E'}$-tiling. And in Lemma~\ref{lem:upper-bound-of-tiling-cap-folner-net} we show that, for each $\ntuple{E,E'}$-tiling with finite sets $E$ and $E'$, the net $\net{\abs{T \cap F_i^{-E}}}_{i \in I}$ is asymptotic not less than $\net{\abs{F_i}}_{i \in I}$. 



  \begin{definition} 
  \label{def:tiling}
    Let $T$ be a subset of $M$, and let $E$ and $E'$ be two subsets of $G \quotient G_0$. The set $T$ is called \define{$\ntuple{E, E'}$-tiling of $\mathcal{R}$}\graffito{$\ntuple{E, E'}$-tiling of $\mathcal{R}$}\index{tilingofM@$\ntuple{E, E'}$-tiling of $\mathcal{R}$} if and only if the family $\family{t \rightsemiaction E}_{t \in T}$ is pairwise disjoint and the family $\family{t \rightsemiaction E'}_{t \in T}$ is a cover of $M$. 
  \end{definition}



  \begin{remark} 
  \label{rem:E-and-E-prime-of-tiling-can-be-reduced-or-enlarged}
    Let $T$ be an $\ntuple{E, E'}$-tiling of $\mathcal{R}$. For each subset $F$ of $E$ and each superset $F'$ of $E'$ with $F' \subseteq G \quotient G_0$, the set $T$ is an $\ntuple{F, F'}$-tiling of $\mathcal{R}$. In particular, the set $T$ is an $\ntuple{E, E \cup E'}$-tiling of $\mathcal{R}$.
  \end{remark}


  \begin{remark}
  \label{rem:group:tilings}
    In the situation of Remark~\ref{rem:group:interior-closure-boundary}, the notion of $\ntuple{E, E'}$-tiling is the same as the one defined in \cite[Sect.~5.6, Paragraph~2]{ceccherini-silberstein:coornaert:2010}.
  \end{remark}

  \begin{example}
  \label{ex:sphere:tiling}
    In the situation of Example~\ref{ex:sphere:interior-closure-and-boundary},
    let $E'$ be the set $\set{g (g')^{-1} G_0 \suchthat e, e' \in E, g \in e, g' \in e'}\ (= \set{g_0 g g_0' g^{-1} G_0 \suchthat g_0, g_0' \in G_0})$ and, for each point $m \in M$, let $E'_m = m \rightsemiaction E'$. Because $g^{-1}$ is the rotation about the axis $a$ by $-\rho$ radians, the set $G_0 g^{-1} \leftaction m_0$ is equal to $E_{m_0}$ and the set $g G_0 g^{-1} \leftaction m_0$ is equal to $E_{g \leftaction m_0}$. Because $m_0 \rightsemiaction E' = g_{m_0, m_0} G_0 g G_0 g^{-1} \leftaction m_0 = G_0 \leftaction (g G_0 g^{-1} \leftaction m_0) = G_0 \leftaction E_{g \leftaction m_0}$, the set $E'_{m_0}$ is the curved circular disk of radius $2 \rho$ with the north pole $m_0$ at its centre. And, for each point $m \in M$, because $m \rightsemiaction E' = g_{m_0, m} \leftaction E'_{m_0}$, the set $E'_m$ is the curved circular disk of radius $2 \rho$ with $m$ at its centre.

    If the radius $\rho = \pi / 2$, then the circle $E_{m_0}$ is the equator and the curved circular disk $E'_{m_0}$ has radius $\pi$ and is thus the sphere $M$, and hence the set $T = \set{m_0}$ is an $\ntuple{E, E'}$-tiling of $\mathcal{R}$;
    if the radius $\rho = \pi / 4$, then the curved circular disks $E'_{m_0}$ and $E'_{S}$, where $S$ is the south pole, have radii $\pi / 2$, thus they are hemispheres, and hence the set $T = \set{m_0, S}$ is an $\ntuple{E, E'}$-tiling of $\mathcal{R}$;
    if the radius $\rho = \pi / 8$, then the curved circular disks $E'_{m_0}$ and $E'_{S}$ have radii $\pi / 4$, and it can be shown with spherical geometry that the set $T$ consisting of the north pole $m_0$, the south pole $S$, four equidistant points $m_1$, $m_2$, $m_3$, and $m_4$ on the equator, and the circumcentres $c_1$, $c_2$, $\dotsc$, $c_8$ of the $8$ smallest spherical triangles with one vertex from $\set{m_0, S}$ and two vertices from $\set{m_1, m_2, m_3, m_4}$ (see Fig.~\ref{fig:tiling-of-sphere:pi-eighth}).
    \begin{figure}[bt] 
      \centering
      \includegraphics[trim = 65px 68px 35px 85px, clip]{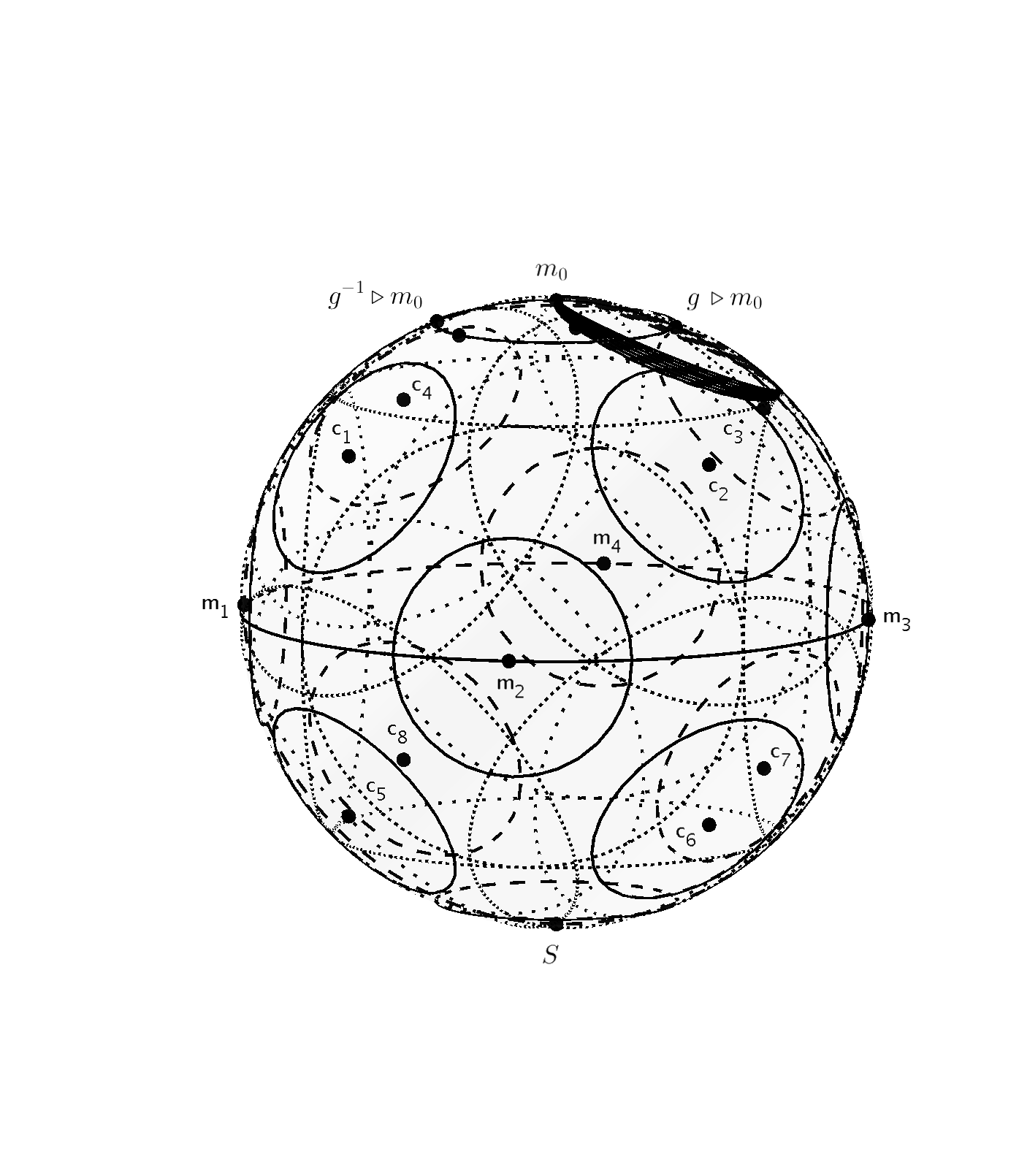} 
      \caption{The points $m_0, S$, $m_1, m_2, m_3, m_4$, $c_1, c_2, \dotsc, c_8$ constitute an $\ntuple{E, E'}$-tiling of the sphere; the circles $E_m$ about these points are drawn solid; the boundaries of the curved circular disks $E'_m$ about these points are drawn dotted; the inclined circle about $g \leftaction m_0$ is the rotation $E_{g \leftaction m_0}$ of $E_{m_0}$ by $\pi / 8$ about the axis $a$; and the other inclined circles are rotations $g_0 \leftaction (E_{g \leftaction m_0})$ of $E_{g \leftaction m_0}$ about the $z$-axis, for a few $g_0 \in G_0$.}
      \label{fig:tiling-of-sphere:pi-eighth}
    \end{figure}
  \end{example}

  \begin{theorem} 
  \label{thm:existence-of-tiling}
    Let $E$ be a non-empty subset of $G \quotient G_0$. There is an $\ntuple{E, E'}$-tiling of $\mathcal{R}$, where 
      $E' = \set{g (g')^{-1} G_0 \suchthat e, e' \in E, g \in e, g' \in e'}$.  
  \end{theorem}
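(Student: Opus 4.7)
The plan is to apply Zorn's lemma to the poset $\mathcal{P}$ of subsets $T \subseteq M$ for which $\family{t \rightsemiaction E}_{t \in T}$ is pairwise disjoint, ordered by inclusion, and then show that any maximal element is automatically an $\ntuple{E, E'}$-tiling. The set $\mathcal{P}$ is non-empty (it contains $\emptyset$) and every chain in it has an upper bound, namely its union: if $\family{T_j}_{j \in J}$ is a chain and $t, t' \in \bigcup_j T_j$ with $t \neq t'$, then both lie in a common $T_j$ and hence $(t \rightsemiaction E) \cap (t' \rightsemiaction E) = \emptyset$. Zorn's lemma therefore yields a maximal $T \in \mathcal{P}$.

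The bulk of the proof will be a geometric lemma asserting that for every $m \in M$ and every $t \in T$,
\begin{equation*}
(m \rightsemiaction E) \cap (t \rightsemiaction E) \neq \emptyset \implies m \in t \rightsemiaction E'.
\end{equation*}
To establish this, suppose $m \rightsemiaction e = t \rightsemiaction e'$ for some $e, e' \in E$; writing $e = g G_0$ and $e' = g' G_0$, this means $g_{m_0, m} g \leftaction m_0 = g_{m_0, t} g' \leftaction m_0$, so there exists $g_0 \in G_0$ with $g_{m_0, t}^{-1} g_{m_0, m} = g' g_0 g^{-1}$. Setting $h_1 = g' g_0$ and $h_2 = g$, I note that $h_1 \in g' G_0 = e' \in E$ and $h_2 \in g G_0 = e \in E$, so the coset $g'' G_0 := h_1 h_2^{-1} G_0 = g' g_0 g^{-1} G_0$ belongs to $E'$. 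By construction $g_{m_0, t} g'' \leftaction m_0 = g_{m_0, m} \leftaction m_0 = m$, i.e.\ $m \in t \rightsemiaction g'' G_0 \subseteq t \rightsemiaction E'$. This computation, which forces the definition of $E'$, is where I expect the main bookkeeping obstacle to lie; everything else is formal.

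With the lemma in hand, the proof concludes quickly. First observe that $G_0 \in E'$: picking any $e \in E$ and any $h \in e$, one has $h h^{-1} G_0 = G_0$, so $G_0 \in E'$, which implies $m \in m \rightsemiaction E'$ for every $m \in M$. Now suppose, for contradiction, that $T$ is not an $\ntuple{E, E'}$-tiling; then there exists $m \in M$ with $m \notin \bigcup_{t \in T} t \rightsemiaction E'$. In particular $m \notin T$ (since $m \in m \rightsemiaction E'$), and by the lemma $(m \rightsemiaction E) \cap (t \rightsemiaction E) = \emptyset$ for every $t \in T$. Hence $T \cup \set{m}$ lies in $\mathcal{P}$ and strictly contains $T$, contradicting maximality. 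Therefore $\family{t \rightsemiaction E'}_{t \in T}$ covers $M$, and $T$ is the desired $\ntuple{E, E'}$-tiling.\qed
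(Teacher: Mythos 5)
Your proof is correct and follows essentially the same route as the paper: Zorn's lemma applied to the collection of subsets $T$ with $\family{t \rightsemiaction E}_{t \in T}$ pairwise disjoint, followed by the observation that maximality forces $\family{t \rightsemiaction E'}_{t \in T}$ to cover $M$. The only difference is cosmetic: where the paper invokes Lemma~\ref{lem:rightsemiaction-can-be-undone} to produce the coset $g(g')^{-1}G_0 \in E'$ with $m = t \rightsemiaction g(g')^{-1}G_0$, you unwind the semi-action explicitly via the coordinate system, which amounts to the same computation.
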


  \begin{proof}
      Let
        $\mathcal{S} = \set{S \subseteq M \suchthat \family{s \rightsemiaction E}_{s \in S} \text{ is pairwise disjoint}}$.
      Because $\set{m_0} \in \mathcal{S}$, the set $\mathcal{S}$ is non-empty. Moreover, it is preordered by inclusion.

      Let $\mathcal{C}$ be a chain in $\ntuple{\mathcal{S}, \subseteq}$. Then, $\bigcup_{S \in \mathcal{C}} S$ is an element of $\mathcal{S}$ and an upper bound of $\mathcal{C}$. According to Zorn's lemma, there is a maximal element $T$ in $\mathcal{S}$. By definition of $\mathcal{S}$, the family $\family{t \rightsemiaction E}_{t \in T}$ is pairwise disjoint. 

%
      Let $m \in M$. Because $T$ is maximal and $m \rightsemiaction E$ is non-empty, there is a $t \in T$ such that $(t \rightsemiaction E) \cap (m \rightsemiaction E) \neq \emptyset$. Hence, there are $e$, $e' \in E$ such that $t \rightsemiaction e = m \rightsemiaction e'$. According to Lemma~\ref{lem:rightsemiaction-can-be-undone}, there is a $g' \in e'$ such that $(m \rightsemiaction e') \rightsemiaction (g')^{-1} G_0 = m$, and there is a $g \in e$ such that $(t \rightsemiaction e) \rightsemiaction (g')^{-1} G_0 = t \rightsemiaction g (g')^{-1} G_0$. Therefore, $m = t \rightsemiaction g (g')^{-1} G_0$. Because $g (g')^{-1} G_0 \in E'$, we have $m \in t \rightsemiaction E'$. Thus, $\family{t \rightsemiaction E'}_{t \in T}$ is a cover of $M$. 

      In conclusion, $T$ is an $\ntuple{E, E'}$-tiling of $\mathcal{R}$. \qed
  \end{proof}


  \begin{lemma} 
  \label{lem:upper-bound-of-tiling-cap-folner-net} 
    Let $G_0$ be finite, let $\net{F_i}_{i \in I}$ be a right Følner net in $\mathcal{R}$ indexed by $(I, \leq)$, let $E$ and $E'$ be two finite subsets of $G \quotient G_0$, and let $T$ be an $\ntuple{E, E'}$-tiling of $\mathcal{R}$. There is a positive real number $\varepsilon \in \R_{> 0}$ and there is an index $i_0 \in I$ such that, for each index $i \in I$ with $i \geq i_0$, we have $\abs{T \cap F_i^{-E}} \geq \varepsilon \abs{F_i}$.
  \end{lemma}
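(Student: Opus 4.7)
The plan is to bound $\abs{T \cap F_i^{-E}}$ from below by a closely related count that is easier to estimate, then to control the gap between the two using the Følner boundary theorem.

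First I would introduce the auxiliary set $T_i^{+} = T \cap F_i^{+E'}$ of tiles whose $E'$-translate meets $F_i$. Since $\family{t \rightsemiaction E'}_{t \in T}$ is a cover of $M$, it covers $F_i$; and any point of $F_i$ that lies in $t \rightsemiaction E'$ forces $t \in T_i^{+}$ by definition of $F_i^{+E'}$. Because the semi-action is free in its second argument, $t \rightsemiaction \blank$ is injective and hence $\abs{t \rightsemiaction E'} = \abs{E'}$, so
\begin{equation*}
  \abs{F_i} \leq \sum_{t \in T_i^{+}} \abs{t \rightsemiaction E'} \leq \abs{T_i^{+}} \cdot \abs{E'}, \qquad \text{giving} \qquad \abs{T_i^{+}} \geq \frac{\abs{F_i}}{\abs{E'}}.
\end{equation*}
Note that $\abs{E'} \geq 1$ since the cover of the non-empty $M$ would otherwise be empty.

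Next I would write $T_i^{-} = T \cap F_i^{-E}$ (the quantity we seek to bound) and compare the two. If $t \in T_i^{+} \smallsetminus T_i^{-}$, then $t \in F_i^{+E'}$ but $t \notin F_i^{-E}$. Setting $E_0 = E \cup E'$, which is finite, the monotonicity in Item~\ref{it:properties-of-interior-closure-and-boundary:inclusions} of Lemma~\ref{lem:properties-of-interior-closure-and-boundary} yields $F_i^{+E'} \subseteq F_i^{+E_0}$ and $F_i^{-E_0} \subseteq F_i^{-E}$, so
\begin{equation*}
  T_i^{+} \smallsetminus T_i^{-} \subseteq F_i^{+E_0} \smallsetminus F_i^{-E_0} = \boundary_{E_0} F_i.
\end{equation*}

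Finally, applying Theorem~\ref{thm:boundary-characterisation-of-folner-net} to the finite set $E_0$, I get $\abs{\boundary_{E_0} F_i} / \abs{F_i} \to 0$, so there is $i_0 \in I$ with $\abs{\boundary_{E_0} F_i} < \abs{F_i} / (2\abs{E'})$ for all $i \geq i_0$. Combining with the lower bound from step one,
\begin{equation*}
  \abs{T_i^{-}} \geq \abs{T_i^{+}} - \abs{T_i^{+} \smallsetminus T_i^{-}} \geq \frac{\abs{F_i}}{\abs{E'}} - \frac{\abs{F_i}}{2\abs{E'}} = \frac{\abs{F_i}}{2\abs{E'}},
\end{equation*}
so $\varepsilon = 1/(2\abs{E'})$ works. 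The only delicate step is identifying $T_i^{+} \smallsetminus T_i^{-} \subseteq \boundary_{E_0} F_i$, which reduces the bound on "boundary tiles" to the already-established boundary characterization of Følner nets; everything else is routine bookkeeping around the covering and disjointness properties of the tiling.
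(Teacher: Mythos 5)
Your proof is correct and follows essentially the same route as the paper's: lower-bound $\abs{T \cap F_i^{+E'}}$ via the covering property and freeness, then show the surplus tiles land in a boundary set and invoke Theorem~\ref{thm:boundary-characterisation-of-folner-net}. The only cosmetic difference is that the paper reduces to $E \subseteq E'$ via Remark~\ref{rem:E-and-E-prime-of-tiling-can-be-reduced-or-enlarged} and works with $\boundary_{E'} F_i$, whereas you keep $E$ and $E'$ as given and pass to $\boundary_{E \cup E'} F_i$ by monotonicity; both yield the same $\varepsilon = 1/(2\abs{E'})$.
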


  \begin{proof}
    According to Remark~\ref{rem:E-and-E-prime-of-tiling-can-be-reduced-or-enlarged}, we may suppose, without loss of generality, that $E \subseteq E'$.
    Let $i \in I$. Put
    \begin{equation*}
      T_i^- = T \cap F_i^{-E} = \set{t \in T \suchthat t \rightsemiaction E \subseteq F_i}
    \end{equation*}
    and
    \begin{equation*}
      T_i^+ = T \cap F_i^{+E'} = \set{t \in T \suchthat (t \rightsemiaction E') \cap F_i \neq \emptyset}
    \end{equation*}
    (see Fig.~\ref{fig:upper-bound-of-tiling-cap-folner-net}).
    \begin{figure}[bt]
      \centering
      \begin{minipage}[c]{\textwidth / 2} 
        \begin{tikzpicture}[circle dotted/.style = {dash pattern = on .05mm off 1.5pt, line cap = round}] 
          \pgfmathsetmacro\E{0.4} 
          \pgfmathsetmacro\Ep{0.7} 
          \pgfmathsetmacro\xFi{1 - 0.5} 
          \pgfmathsetmacro\yFi{1 - 0.5} 
          \pgfmathsetmacro\wFi{2.7} 
          \pgfmathsetmacro\hFi{3} 

          \foreach \x in {1, ..., 2} { 
            \foreach \y in {1, ..., 3} {
              \draw (\x, \y) circle (1pt);
            }
          }
          \foreach \x in {0, 3} { 
            \foreach \y in {0, ..., 4} {
              \draw (\x, \y) circle (1pt);
              \fill (\x, \y) circle (0.375pt);
            }
          }
          \foreach \x in {1, 2} { 
            \foreach \y in {0, 4} {
              \draw (\x, \y) circle (1pt);
              \fill (\x, \y) circle (0.375pt);
            }
          }
          \foreach \y in {0, ..., 4} { 
            \fill (4, \y) circle (1pt);
          }
          \foreach \x in {0, ..., 4} {
            \foreach \y in {0, ..., 4} {
              \draw (\x - \E, \y - \E) rectangle (\x + \E, \y + \E); 
              \draw[gray, dashdotted] (\x - \Ep, \y - \Ep) rectangle (\x + \Ep, \y + \Ep); 
            }
          }
          \draw[line width = 0.5pt, dashed] (\xFi, \yFi) rectangle (\xFi + \wFi, \yFi + \hFi); 
          \draw[line width = 0.75pt, circle dotted] (\xFi + \E, \yFi + \E) rectangle (\xFi + \wFi - \E, \yFi + \hFi - \E); 
          \draw[line width = 0.75pt, circle dotted] (\xFi - \Ep, \yFi - \Ep) rectangle (\xFi + \wFi + \Ep, \yFi + \hFi + \Ep); 
          \draw[line width = 0.75pt, circle dotted] (\xFi + \Ep, \yFi + \Ep) rectangle (\xFi + \wFi - \Ep, \yFi + \hFi - \Ep); 
        \end{tikzpicture}
      \end{minipage}%
      \begin{minipage}[c]{\textwidth / 2}
        The whole space is $M$; the dots, circles, and circles with dots are the elements of the tiling $T$; for each element $t \in T$, the region enclosed by the rectangle with solid border centred at $t$ is the set $t \rightsemiaction E$ and the region enclosed by the rectangle with dash-dotted border centred at $t$ is the set $t \rightsemiaction E'$; the region enclosed by the rectangle with dashed border is $F_i$; the region enclosed by the smallest rectangle with dotted border is $F_i^{-E'}$, the region enclosed by the second smallest rectangle with dotted border is $F_i^{-E}$, and the region enclosed by the largest rectangle with dotted border is $F_i^{+E'}$; the circles are the elements of $T_i^- = T \cap F_i^{-E}$, and the circles with and without dots are the elements of $T_i^+ = T \cap F_i^{+E'}$. 
      \end{minipage}
      \caption{Schematic representation of the set-up of the proof of Lemma~\ref{lem:upper-bound-of-tiling-cap-folner-net}.}
      \label{fig:upper-bound-of-tiling-cap-folner-net}
    \end{figure}
    Because $T$ is an $\ntuple{E, E'}$-tiling of $\mathcal{R}$,
    \begin{equation*}
      F_i = M \cap F_i
          = \parens[\big]{\bigcup_{t \in T} t \rightsemiaction E'} \cap F_i
          = \bigcup_{t \in T} (t \rightsemiaction E') \cap F_i.
    \end{equation*}
    Moreover, for each $t \in T \smallsetminus T_i^+$, we have $(t \rightsemiaction E') \cap F_i = \emptyset$. Hence,
    \begin{equation*}
      F_i = \bigcup_{t \in T_i^+} (t \rightsemiaction E') \cap F_i.
    \end{equation*}
    Therefore, $F_i \subseteq \bigcup_{t \in T_i^+} t \rightsemiaction E'$. Because $\rightsemiaction$ is free, for each $t \in T_i^+$, we have $\abs{t \rightsemiaction E'} = \abs{E'}$. Hence, $\abs{F_i} \leq \abs{T_i^+} \cdot \abs{E'}$. Thus, because $E' \neq \emptyset$, 
    \begin{equation*}
      \frac{\abs{T_i^+}}{\abs{F_i}} \geq \frac{1}{\abs{E'}}.
    \end{equation*}
    Because $E \subseteq E'$, according to Item~\ref{it:properties-of-interior-closure-and-boundary:inclusions} of Lemma~\ref{lem:properties-of-interior-closure-and-boundary}, we have $F_i^{-E} \supseteq F_i^{-E'}$. Therefore,
    \begin{equation*}
      T_i^+ \smallsetminus T_i^-
      =         T \cap (F_i^{+E'} \smallsetminus F_i^{-E})
      \subseteq T \cap (F_i^{+E'} \smallsetminus F_i^{-E'})
      =         T \cap \boundary_{E'} F_i
      \subseteq \boundary_{E'} F_i.
    \end{equation*}
    Hence, $\abs{\boundary_{E'} F_i} \geq \abs{T_i^+ \smallsetminus T_i^-} \geq \abs{T_i^+} - \abs{T_i^-}$. Therefore, 
    \begin{equation*}
      \frac{\abs{T_i^-}}{\abs{F_i}}
      \geq \frac{\abs{T_i^+}}{\abs{F_i}} - \frac{\abs{\boundary_{E'} F_i}}{\abs{F_i}}
      \geq \frac{1}{\abs{E'}} - \frac{\abs{\boundary_{E'} F_i}}{\abs{F_i}}.
    \end{equation*}
    Moreover, according to Theorem~\ref{thm:boundary-characterisation-of-folner-net}, there is an $i_0 \in I$ such that 
    \begin{equation*}
      \ForEach i \in I \Holds i \geq i_0 \implies \frac{\abs{\boundary_{E'} F_i}}{\abs{F_i}} \leq \frac{1}{2 \abs{E'}}.
    \end{equation*}
    Put $\varepsilon = 1 / (2 \abs{E'})$. Then, for each $i \in I$ with $i \geq i_0$,
    \begin{equation*}
      \frac{\abs{T_i^-}}{\abs{F_i}} \geq \frac{1}{2 \abs{E'}} = \varepsilon. \tag*{\qed}
    \end{equation*}
  \end{proof}

  \section{Entropies} 
  \label{sec:entropies}

  In this section, let $\mathcal{R} = \ntuple{\ntuple{M, G, \leftaction}, \ntuple{m_0, \family{g_{m_0, m}}_{m \in M}}}$ be a right amenable cell space, let $\mathcal{C} = \ntuple{\mathcal{R}, Q, N, \delta}$ be a semi-cellular automaton, and let $\Delta$ be the global transition function of $\mathcal{C}$ such that the stabiliser $G_0$ of $m_0$ under $\leftaction$, the set $Q$ of states, and the neighbourhood $N$ are finite, and the set $Q$ is non-empty. 

  In Definition~\ref{def:entropy} we introduce the entropy of a subset $X$ of $Q^M$ with respect to a net $\net{F_i}_{i \in I}$ of non-empty and finite subsets of $M$, which is the asymptotic growth rate of the number of finite patterns with domain $F_i$ that occur in $X$. In Lemma~\ref{lem:entropy-basic-facts} we show that $Q^M$ has entropy $\log\abs{Q}$ and that entropy is non-decreasing. In Theorem~\ref{thm:entropy-does-non-increase} we show that applications of global transition functions of cellular automata on subsets of $Q^M$ do not increase their entropy. And in Lemma~\ref{lem:entorpy-bounded-above-if-strange-tiling-exists} we show that if for each point $t$ of an $\ntuple{E,E'}$-tiling not all patterns with domain $t \rightsemiaction E$ occur in a subset of $Q^M$, then that subset has less entropy than $Q^M$. 

  \begin{definition}
  \label{def:entropy}
    Let $X$ be a subset of $Q^M$ and let $\mathcal{F} = \net{F_i}_{i \in I}$ be a net in $\set{F \subseteq M \suchthat F \neq \emptyset, F \text{ finite}}$. The non-negative real number 
    \begin{equation*}
      \entropy_{\mathcal{F}}(X) = \limsup\limits_{i \in I} \frac{\log\abs{\pi_{F_i}(X)}}{\abs{F_i}}
    \end{equation*}
    is called \define{entropy of $X$ with respect to $\mathcal{F}$}.
  \end{definition}

  \begin{remark}
  \label{rem:group:entropy}
    In the situation of Remark~\ref{rem:group:interior-closure-boundary}, the notion of entropy is the same as the one defined in \cite[Definition~5.7.1]{ceccherini-silberstein:coornaert:2010}.
  \end{remark}

  \begin{lemma}
  \label{lem:entropy-basic-facts}
    Let $\mathcal{F} = \net{F_i}_{i \in I}$ be a net in $\set{F \subseteq M \suchthat F \neq \emptyset, F \text{ finite}}$. Then,
    \begin{enumerate}
      \item \label{it:entropy-basic-facts:whole-space}
            $\entropy_{\mathcal{F}}(Q^M) = \log\abs{Q}$;
      \item \label{it:entropy-basic-facts:monotonic}
            $\ForEach X \subseteq Q^M \ForEach X' \subseteq Q^M \Holds \parens[\big]{X \subseteq X' \implies \entropy_{\mathcal{F}}(X) \leq \entropy_{\mathcal{F}}(X')}$;
      \item \label{it:entropy-basic-facts:bound}
            $\ForEach X \subseteq Q^M \Holds \entropy_{\mathcal{F}}(X) \leq \log\abs{Q}$.
    \end{enumerate}
  \end{lemma}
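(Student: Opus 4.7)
The plan is to handle the three items in order, with items~\ref{it:entropy-basic-facts:whole-space} and \ref{it:entropy-basic-facts:monotonic} being essentially immediate from Definition~\ref{def:entropy} and item~\ref{it:entropy-basic-facts:bound} following from them.

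For item~\ref{it:entropy-basic-facts:whole-space}, the key observation is that, because every map $p \from F_i \to Q$ extends to some global configuration in $Q^M$ (for instance, extend arbitrarily outside $F_i$; here we use that $Q$ is non-empty), we have $\pi_{F_i}(Q^M) = Q^{F_i}$ and therefore $\abs{\pi_{F_i}(Q^M)} = \abs{Q}^{\abs{F_i}}$. Taking logarithms and dividing by $\abs{F_i}$ gives the constant sequence $\log\abs{Q}$, so $\entropy_{\mathcal{F}}(Q^M) = \limsup_{i \in I} \log\abs{Q} = \log\abs{Q}$.

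For item~\ref{it:entropy-basic-facts:monotonic}, suppose $X \subseteq X'$. Since $\pi_{F_i}$ is a map, $\pi_{F_i}(X) \subseteq \pi_{F_i}(X')$ for each $i \in I$, hence $\abs{\pi_{F_i}(X)} \leq \abs{\pi_{F_i}(X')}$. Applying the monotonic function $\log$ and dividing by the positive number $\abs{F_i}$ preserves the inequality, and the limsup of a pointwise-dominated net is dominated, giving $\entropy_{\mathcal{F}}(X) \leq \entropy_{\mathcal{F}}(X')$.

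For item~\ref{it:entropy-basic-facts:bound}, any $X \subseteq Q^M$ satisfies, by item~\ref{it:entropy-basic-facts:monotonic}, $\entropy_{\mathcal{F}}(X) \leq \entropy_{\mathcal{F}}(Q^M)$, which equals $\log\abs{Q}$ by item~\ref{it:entropy-basic-facts:whole-space}. There is no real obstacle anywhere in the proof; the only mild subtlety is verifying in item~\ref{it:entropy-basic-facts:whole-space} that $\pi_{F_i}$ actually surjects onto $Q^{F_i}$, which requires $Q \neq \emptyset$ (a hypothesis standing in this section) to guarantee a global extension of every finite pattern.
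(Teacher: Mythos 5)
Your proof is correct and follows essentially the same route as the paper's: compute $\pi_{F_i}(Q^M) = Q^{F_i}$ for item~1, use monotonicity of $\pi_{F_i}$, $\log$, and $\limsup$ for item~2, and combine the two for item~3. Your remark that $Q \neq \emptyset$ is needed for every finite pattern to extend to a global configuration is a nice touch the paper leaves implicit.
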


  \begin{proof}
    \begin{enumerate}
      \item For each $i \in I$, we have $\pi_{F_i}(Q^M) = Q^{F_i}$ and hence
            \begin{equation*}
              \frac{\log\abs{\pi_{F_i}(Q^M)}}{\abs{F_i}} 
                                                         = \frac{\log\abs{Q}^{\abs{F_i}}}{\abs{F_i}}
                                                         = \frac{\abs{F_i} \cdot \log\abs{Q}}{\abs{F_i}}
                                                         = \log\abs{Q}.
            \end{equation*}
            In conclusion, $\entropy_{\mathcal{F}}(Q^M) = \log\abs{Q}$.
      \item Let $X$, $X' \subseteq Q^M$ such that $X \subseteq X'$. For each $i \in I$, we have $\pi_{F_i}(X) \subseteq \pi_{F_i}(X')$ and hence, because $\log$ is non-decreasing, $\log\abs{\pi_{F_i}(X)} \leq \log\abs{\pi_{F_i}(X')}$. In conclusion, $\entropy_{\mathcal{F}}(X) \leq \entropy_{\mathcal{F}}(X')$. 
      \item This is a direct consequence of Items~\ref{it:entropy-basic-facts:monotonic} and~\ref{it:entropy-basic-facts:whole-space}. \qed
    \end{enumerate}
  \end{proof}

  In the remainder of this section, let $\mathcal{F} = \net{F_i}_{i \in I}$ be a right Følner net in $\mathcal{R}$ indexed by $(I, \leq)$. 

  \begin{theorem}
  \label{thm:entropy-does-non-increase}
    Let $X$ be a subset of $Q^M$. Then, $\entropy_{\mathcal{F}}(\Delta(X)) \leq \entropy_{\mathcal{F}}(X)$.
  \end{theorem}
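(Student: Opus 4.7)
The plan is to upper-bound $\abs{\pi_{F_i}(\Delta(X))}$ by $\abs{\pi_{F_i}(X)}$ up to a multiplicative factor supported on an $N$-boundary of $F_i$, and then absorb that factor via the Følner boundary characterisation in Theorem~\ref{thm:boundary-characterisation-of-folner-net}. A preparatory cosmetic step is to replace $N$ by $N' = N \cup \set{G_0}$. The set $N'$ is still finite, and it now contains the identity coset $G_0$, so Items~\ref{it:properties-of-interior-closure-and-boundary:inclusions} and~\ref{it:properties-of-interior-closure-and-boundary:neutral-element} of Lemma~\ref{lem:properties-of-interior-closure-and-boundary} yield $F_i^{-N'} \subseteq F_i^{-N}$ together with $F_i^{-N'} \subseteq F_i \subseteq F_i^{+N'}$, and hence $F_i \smallsetminus F_i^{-N'} \subseteq \boundary_{N'} F_i$.

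The main step combines two inequalities. By Lemma~\ref{lem:Delta-X-A-minus-plus-are-surjective} applied with $A = F_i$, the map $\Delta_{X, F_i}^-$ surjects $\pi_{F_i}(X)$ onto $\pi_{F_i^{-N}}(\Delta(X))$, so after further restricting to the smaller set $F_i^{-N'}$ one obtains
\[
\abs{\pi_{F_i^{-N'}}(\Delta(X))} \leq \abs{\pi_{F_i^{-N}}(\Delta(X))} \leq \abs{\pi_{F_i}(X)}.
\]
In the other direction, the restriction map $p \mapsto p\restriction_{F_i^{-N'}}$ sends $\pi_{F_i}(\Delta(X))$ onto $\pi_{F_i^{-N'}}(\Delta(X))$ with fibres of cardinality at most $\abs{Q}^{\abs{F_i \smallsetminus F_i^{-N'}}}$, since any pattern on $F_i$ extending a fixed pattern on $F_i^{-N'}$ is determined by its values on $F_i \smallsetminus F_i^{-N'}$. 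Chaining these two bounds gives
\[
\abs{\pi_{F_i}(\Delta(X))} \leq \abs{\pi_{F_i}(X)} \cdot \abs{Q}^{\abs{F_i \smallsetminus F_i^{-N'}}}.
\]

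Taking logarithms, dividing by $\abs{F_i}$, and using $\abs{F_i \smallsetminus F_i^{-N'}} \leq \abs{\boundary_{N'} F_i}$, one arrives at
\[
\frac{\log \abs{\pi_{F_i}(\Delta(X))}}{\abs{F_i}} \leq \frac{\log \abs{\pi_{F_i}(X)}}{\abs{F_i}} + \frac{\abs{\boundary_{N'} F_i}}{\abs{F_i}} \cdot \log\abs{Q}.
\]
Since $N'$ is finite and $\mathcal{F}$ is a right Følner net, Theorem~\ref{thm:boundary-characterisation-of-folner-net} makes the second summand tend to $0$ along $I$, and passing to $\limsup_{i \in I}$ of both sides yields $\entropy_{\mathcal{F}}(\Delta(X)) \leq \entropy_{\mathcal{F}}(X)$. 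The only mildly delicate point in this plan is the preliminary replacement of $N$ by $N'$: without the identity coset $G_0 \in N'$, the set $F_i^{-N}$ need not lie inside $F_i$ and the fibre-counting restriction map would not be directly available; everything else is routine entropy bookkeeping on top of the Følner boundary characterisation.
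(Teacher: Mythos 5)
Your proof is correct and follows essentially the same route as the paper: bound $\abs{\pi_{F_i}(\Delta(X))}$ by $\abs{\pi_{F_i}(X)}$ times $\abs{Q}$ to the power of a set contained in an $N$-boundary of $F_i$, using the surjectivity of $\Delta_{X,F_i}^-$ from Lemma~\ref{lem:Delta-X-A-minus-plus-are-surjective}, and then kill the boundary term with Theorem~\ref{thm:boundary-characterisation-of-folner-net}. The only difference is cosmetic: the paper assumes $G_0 \in N$ without loss of generality at the outset, whereas you keep $\Delta$ defined via $N$ and enlarge only the bookkeeping set to $N' = N \cup \set{G_0}$, which is a slightly more careful way of justifying the same reduction.
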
 

  \begin{proof}
    Suppose, without loss of generality, that $G_0 \in N$.
    Let $i \in I$. According to Lemma~\ref{lem:Delta-X-A-minus-plus-are-surjective}, the map
    $\Delta_{X, F_i}^- \from \pi_{F_i}(X) \to \pi_{F_i^{-N}}(\Delta(X))$
    is surjective. Therefore, $\abs{\pi_{F_i^{-N}}(\Delta(X))} \leq \abs{\pi_{F_i}(X)}$.
    Because $G_0 \in N$, according to Item~\ref{it:properties-of-interior-closure-and-boundary:neutral-element} of Lemma~\ref{lem:properties-of-interior-closure-and-boundary}, we have $F_i^{-N} \subseteq F_i$. Thus, $\pi_{F_i}(\Delta(X)) \subseteq \pi_{F_i^{-N}}(\Delta(X)) \times Q^{F_i \smallsetminus F_i^{-N}}$. Hence, 
    \begin{align*} 
      \log\abs{\pi_{F_i}(\Delta(X))}
      &\leq \log\abs{\pi_{F_i^{-N}}(\Delta(X))} + \log\abs{Q^{F_i \smallsetminus F_i^{-N}}}\\ 
      &\leq 
             \log\abs{\pi_{F_i}(X)} + \abs{F_i \smallsetminus F_i^{-N}} \cdot \log\abs{Q}.
    \end{align*}
    Because $G_0 \in N$, according to Item~\ref{it:properties-of-interior-closure-and-boundary:neutral-element} of Lemma~\ref{lem:properties-of-interior-closure-and-boundary}, we have $F_i \subseteq F_i^{+N}$. Therefore, $F_i \smallsetminus F_i^{-N} \subseteq F_i^{+N} \smallsetminus F_i^{-N} = \boundary_N F_i$. Because $G_0$, $F_i$, and $N$ are finite, according to Item~\ref{it:properties-of-interior-closure-and-boundary:finite} of Lemma~\ref{lem:properties-of-interior-closure-and-boundary}, the boundary $\boundary_N F_i$ is finite. Hence,
    \begin{equation*}
      \frac{\log\abs{\pi_{F_i}(\Delta(X))}}{\abs{F_i}} \leq \frac{\log\abs{\pi_{F_i}(X)}}{\abs{F_i}} + \frac{\abs{\boundary_N F_i}}{\abs{F_i}} \log\abs{Q}.
    \end{equation*}
    Therefore, because $N$ is finite, according to Theorem~\ref{thm:boundary-characterisation-of-folner-net},
    \begin{equation*}
      \entropy_{\mathcal{F}}(\Delta(X))
      \leq \limsup\limits_{i \in I} \frac{\log\abs{\pi_{F_i}(X)}}{\abs{F_i}} + \parens*{\lim_{i \in I} \frac{\abs{\boundary_N F_i}}{\abs{F_i}}} \cdot \log\abs{Q}
      =    \entropy_{\mathcal{F}}(X). \tag*{\qed}
    \end{equation*}
  \end{proof}

  \begin{lemma} 
  \label{lem:entorpy-bounded-above-if-strange-tiling-exists}
    Let $Q$ contain at least two elements, let $X$ be a subset of $Q^M$, let $E$ and $E'$ be two non-empty and finite subsets of $G \quotient G_0$, and let $T$ be an $\ntuple{E, E'}$-tiling of $\mathcal{R}$, such that, for each cell $t \in T$, we have $\pi_{t \rightsemiaction E}(X) \subsetneqq Q^{t \rightsemiaction E}$. Then, $\entropy_{\mathcal{F}}(X) < \log\abs{Q}$.
  \end{lemma}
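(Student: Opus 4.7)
The plan is to count, for each index $i \in I$, the number of patterns $\pi_{F_i}(X)$ by exploiting the tiles $t \in T$ whose $E$-image $t \rightsemiaction E$ lies inside $F_i$. The key observation is that on each such $E$-image we lose at least one pattern (by hypothesis), and the lemma on tilings inside Følner sets guarantees that a positive fraction of $F_i$ is covered by such tile-images.

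More precisely, fix $i \in I$ and put $T_i^- = T \cap F_i^{-E}$. By definition of $F_i^{-E}$, for every $t \in T_i^-$ we have $t \rightsemiaction E \subseteq F_i$; by definition of tiling, the family $\family{t \rightsemiaction E}_{t \in T_i^-}$ is pairwise disjoint; and since $\rightsemiaction$ is free, each $t \rightsemiaction E$ has cardinality $\abs{E}$. Writing $R_i = F_i \smallsetminus \bigcup_{t \in T_i^-} t \rightsemiaction E$, one has $\abs{R_i} = \abs{F_i} - \abs{T_i^-} \cdot \abs{E}$ and
\begin{equation*}
    \abs{\pi_{F_i}(X)} \leq \abs{Q}^{\abs{R_i}} \cdot \prod_{t \in T_i^-} \abs{\pi_{t \rightsemiaction E}(X)} \leq \abs{Q}^{\abs{R_i}} \cdot (\abs{Q}^{\abs{E}} - 1)^{\abs{T_i^-}},
\end{equation*}
where the second inequality uses the hypothesis $\pi_{t \rightsemiaction E}(X) \subsetneqq Q^{t \rightsemiaction E}$ together with $\abs{Q^{t \rightsemiaction E}} = \abs{Q}^{\abs{E}}$.

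Taking logarithms and dividing by $\abs{F_i}$, a short calculation gives
\begin{equation*}
    \frac{\log\abs{\pi_{F_i}(X)}}{\abs{F_i}} \leq \log\abs{Q} - \alpha \cdot \frac{\abs{T_i^-}}{\abs{F_i}}, \quad \text{where} \quad \alpha = \log\frac{\abs{Q}^{\abs{E}}}{\abs{Q}^{\abs{E}} - 1} > 0,
\end{equation*}
the positivity of $\alpha$ following from $\abs{Q} \geq 2$ and $E \neq \emptyset$. Now I invoke Lemma~\ref{lem:upper-bound-of-tiling-cap-folner-net}: there is $\varepsilon > 0$ and an index $i_0 \in I$ such that $\abs{T_i^-} = \abs{T \cap F_i^{-E}} \geq \varepsilon \abs{F_i}$ for all $i \geq i_0$. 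Substituting yields $\log\abs{\pi_{F_i}(X)} / \abs{F_i} \leq \log\abs{Q} - \alpha\varepsilon$ eventually, so
\begin{equation*}
    \entropy_{\mathcal{F}}(X) = \limsup_{i \in I} \frac{\log\abs{\pi_{F_i}(X)}}{\abs{F_i}} \leq \log\abs{Q} - \alpha \varepsilon < \log\abs{Q}.
\end{equation*}

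No serious obstacle is expected: the heart of the argument is the combinatorial counting bound above, and all the nontrivial asymptotic content has already been packaged into Lemma~\ref{lem:upper-bound-of-tiling-cap-folner-net}. The only thing to be careful about is ensuring that $\abs{t \rightsemiaction E} = \abs{E}$ (freeness of $\rightsemiaction$) so that the hypothesis $\pi_{t \rightsemiaction E}(X) \subsetneqq Q^{t \rightsemiaction E}$ really means the pattern count is at most $\abs{Q}^{\abs{E}} - 1$, and that $E$ being non-empty makes $\alpha$ strictly positive.
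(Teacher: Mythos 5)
Your proposal is correct and follows essentially the same route as the paper's proof: decompose $F_i$ into the disjoint tile-images $t \rightsemiaction E$ for $t \in T \cap F_i^{-E}$ plus a remainder, bound the pattern count on each tile by $\abs{Q}^{\abs{E}} - 1$, and invoke Lemma~\ref{lem:upper-bound-of-tiling-cap-folner-net} to get a uniform positive density of such tiles. Your constant $\alpha = \log\bigl(\abs{Q}^{\abs{E}} / (\abs{Q}^{\abs{E}} - 1)\bigr)$ is exactly the paper's $c = -\log\bigl(1 - \abs{Q}^{-\abs{E}}\bigr)$.
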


  \begin{proof}
    For each $t \in T$, because $\pi_{t \rightsemiaction E}(X) \subsetneqq Q^{t \rightsemiaction E}$, $\abs{Q} \geq 2$, and $\abs{t \rightsemiaction E} \geq 1$,
    \begin{equation*}
      \abs{\pi_{t \rightsemiaction E}(X)}
      \leq \abs{Q^{t \rightsemiaction E}} - 1
      =    \abs{Q}^{\abs{t \rightsemiaction E}} - 1
      \geq 1.
    \end{equation*}
    Let $i \in I$. Put $T_i = T \cap F_i^{-E}$ and put $F_i^* = F_i \smallsetminus (\bigcup_{t \in T_i} t \rightsemiaction E)$ (see Fig.~\ref{fig:entorpy-bounded-above-if-strange-tiling-exists}).
    \begin{figure}[bt]
      \centering
      \begin{minipage}[c]{\textwidth / 2} 
        \begin{tikzpicture}[circle dotted/.style = {dash pattern = on .05mm off 1.5pt, line cap = round}] 
          \pgfmathsetmacro\E{0.4} 
          \pgfmathsetmacro\Ep{0.7} 
          \pgfmathsetmacro\xFi{1 - 0.5} 
          \pgfmathsetmacro\yFi{1 - 0.5} 
          \pgfmathsetmacro\wFi{2.7} 
          \pgfmathsetmacro\hFi{3} 

          \foreach \x in {0, ..., 4} {
            \foreach \y in {0, ..., 4} {
              \path[fill] (\x, \y) circle (1pt); 
              \draw (\x - \E, \y - \E) rectangle (\x + \E, \y + \E); 
            }
          }
          \draw[dashed, pattern = north east lines] (\xFi, \yFi) rectangle (\xFi + \wFi, \yFi + \hFi); 
          \foreach \x in {1, ..., 2} {
            \foreach \y in {1, ..., 3} {
              \draw[fill = white] (\x - \E, \y - \E) rectangle (\x + \E, \y + \E); 
              \draw (\x, \y) circle (1pt); 
            }
          }
          \draw[line width = 0.75pt, circle dotted] (\xFi + \E, \yFi + \E) rectangle (\xFi + \wFi - \E, \yFi + \hFi - \E); 
        \end{tikzpicture}
      \end{minipage}%
      \begin{minipage}[c]{\textwidth / 2}
        The whole space is $M$; the dots and circles are the elements of the tiling $T$; for each element $t \in T$, the region enclosed by the rectangle with solid border centred at $t$ is the set $t \rightsemiaction E$; the region enclosed by the rectangle with dashed border is $F_i$; the region enclosed by the rectangle with dotted border is $F_i^{-E}$; the circles are the elements of $T_i = T \cap F_i^{-E}$; the hatched region is $F_i^* = F_i \smallsetminus (\bigcup_{t \in T_i} t \rightsemiaction E)$. 
      \end{minipage}
      \caption{Schematic representation of the set-up of the proof of Lemma~\ref{lem:entorpy-bounded-above-if-strange-tiling-exists}.}
      \label{fig:entorpy-bounded-above-if-strange-tiling-exists}
    \end{figure}
    Because $\bigcup_{t \in T_i} t \rightsemiaction E \subseteq F_i$ and $\family{t \rightsemiaction E}_{t \in T}$ is pairwise disjoint,
    \begin{equation*}
      \pi_{F_i}(X) \subseteq \pi_{F_i^*}(X) \times \prod_{t \in T_i} \pi_{t \rightsemiaction E}(X)
                   \subseteq Q^{F_i^*} \times \prod_{t \in T_i} \pi_{t \rightsemiaction E}(X).
    \end{equation*}
    Therefore,
    \begin{align*}
      \log\abs{\pi_{F_i}(X)}
      &\leq \log\abs{Q}^{\abs{F_i^*}} + \sum_{t \in T_i} \log\abs{\pi_{t \rightsemiaction E}(X)}\\ 
      &\leq \log\abs{Q}^{\abs{F_i^*}} + \sum_{t \in T_i} \log\parens[\big]{\abs{Q}^{\abs{t \rightsemiaction E}} - 1}\\ 
      &=    \abs{F_i^*} \cdot \log\abs{Q} + \sum_{t \in T_i} \log\parens[\big]{\abs{Q}^{\abs{t \rightsemiaction E}} (1 - \abs{Q}^{- \abs{t \rightsemiaction E}})}\\
      &=    \abs{F_i^*} \cdot \log\abs{Q} + \sum_{t \in T_i} \abs{t \rightsemiaction E} \cdot \log\abs{Q} + \sum_{t \in T_i} \log\parens[\big]{1 - \abs{Q}^{- \abs{t \rightsemiaction E}}}.
    \end{align*}
    Moreover, for each $t \in T_i$, we have $t \rightsemiaction E \subseteq F_i$. Thus,
    \begin{equation*}
      \abs{F_i^*} = \abs{F_i} - \sum_{t \in T_i} \abs{t \rightsemiaction E}.
    \end{equation*}
    And, because $\rightsemiaction$ is free, we have $\abs{t \rightsemiaction E} = \abs{E}$. Hence,
    \begin{equation*}
      \log\abs{\pi_{F_i}(X)}
      \leq    \abs{F_i} \cdot \log\abs{Q} + \abs{T_i} \cdot \log\parens[\big]{1 - \abs{Q}^{- \abs{E}}}. 
    \end{equation*}
    Put $c = - \log\parens[\big]{1 - \abs{Q}^{- \abs{E}}}$. Because $\abs{Q} \geq 2$ and $\abs{E} \geq 1$, we have $\abs{Q}^{- \abs{E}} \in (0,1)$ and hence $c > 0$. 
    According to Lemma~\ref{lem:upper-bound-of-tiling-cap-folner-net}, there are $\varepsilon \in \R_{> 0}$ and $i_0 \in I$ such that, for each $i \in I$ with $i \geq i_0$, we have $\abs{T_i} \geq \varepsilon \abs{F_i}$. Therefore, for each such $i$, 
    \begin{equation*}
      \frac{\log\abs{\pi_{F_i}(X)}}{\abs{F_i}} \leq \log\abs{Q} - c \varepsilon.
    \end{equation*}
    In conclusion,
    \begin{equation*}
      \entropy_{\mathcal{F}}(X)
      =    \limsup_{i \in I} \frac{\log\abs{\pi_{F_i}(X)}}{\abs{F_i}}\\
      \leq \log\abs{Q} - c \varepsilon\\
      <    \log\abs{Q}. \tag*{\qed}
    \end{equation*}
  \end{proof}

  \begin{corollary} 
  \label{cor:nice-properties-yield-less-entropy}
    Let $Q$ contain at least two elements, let $X$ be a $\inducedleftaction$-invariant subset of $Q^M$, and let $E$ be a non-empty and finite subset of $G \quotient G_0$, such that $\pi_{m_0 \rightsemiaction E}(X) \subsetneqq Q^{m_0 \rightsemiaction E}$. Then, $\entropy_{\mathcal{F}}(X) < \log\abs{Q}$.
  \end{corollary}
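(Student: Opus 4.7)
The plan is to combine Theorem~\ref{thm:existence-of-tiling} with Lemma~\ref{lem:entorpy-bounded-above-if-strange-tiling-exists}: apply the former to obtain a tiling, propagate the strict inclusion $\pi_{m_0 \rightsemiaction E}(X) \subsetneqq Q^{m_0 \rightsemiaction E}$ to every tile centre by means of $\inducedleftaction$-invariance, and then invoke the latter.

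First I would set $E' = \set{g (g')^{-1} G_0 \suchthat e, e' \in E, g \in e, g' \in e'}$. Since $E$ and $G_0$ are non-empty and finite, so is $E'$. By Theorem~\ref{thm:existence-of-tiling} there exists an $\ntuple{E, E'}$-tiling $T$ of $\mathcal{R}$. To apply Lemma~\ref{lem:entorpy-bounded-above-if-strange-tiling-exists} it suffices to show, for every $t \in T$, that $\pi_{t \rightsemiaction E}(X) \subsetneqq Q^{t \rightsemiaction E}$.

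The key step is the translation from $m_0$ to $t$. Write $h = g_{m_0, t}$, so that $t = h \leftaction m_0$ and, by the definition of $\rightsemiaction$, $t \rightsemiaction E = h \leftaction (m_0 \rightsemiaction E)$. Because $h \leftaction \blank$ is a bijection of $M$, precomposition gives a bijection
\begin{equation*}
  \Phi \from Q^{t \rightsemiaction E} \to Q^{m_0 \rightsemiaction E}, \quad p \mapsto [m \mapsto p(h \leftaction m)].
\end{equation*}
For any $c \in X$, the configuration $c' = h^{-1} \inducedleftaction c$ lies in $X$ by $\inducedleftaction$-invariance and satisfies $c'(m) = c(h \leftaction m)$; hence $\Phi(c\restriction_{t \rightsemiaction E}) = c'\restriction_{m_0 \rightsemiaction E}$. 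Conversely, given $c' \in X$, the configuration $c = h \inducedleftaction c' \in X$ satisfies $\Phi(c\restriction_{t \rightsemiaction E}) = c'\restriction_{m_0 \rightsemiaction E}$. Therefore $\Phi$ restricts to a bijection $\pi_{t \rightsemiaction E}(X) \to \pi_{m_0 \rightsemiaction E}(X)$. Since $\Phi$ is a bijection of the full pattern spaces and $\pi_{m_0 \rightsemiaction E}(X) \subsetneqq Q^{m_0 \rightsemiaction E}$, this forces $\pi_{t \rightsemiaction E}(X) \subsetneqq Q^{t \rightsemiaction E}$ for every $t \in T$.

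Finally, all hypotheses of Lemma~\ref{lem:entorpy-bounded-above-if-strange-tiling-exists} are satisfied ($Q$ has at least two elements, $E$ and $E'$ are non-empty finite subsets of $G \quotient G_0$, $T$ is an $\ntuple{E, E'}$-tiling, and the strict inclusion holds at every tile), so it yields $\entropy_{\mathcal{F}}(X) < \log\abs{Q}$. The only non-routine point is the identity $t \rightsemiaction E = g_{m_0, t} \leftaction (m_0 \rightsemiaction E)$ together with the correct reading of $\inducedleftaction$-invariance; once that bijection is in place the rest is packaging.
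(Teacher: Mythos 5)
Your argument is essentially the paper's own proof: obtain the tiling from Theorem~\ref{thm:existence-of-tiling}, transport the strict inclusion from $m_0$ to each tile centre via $\inducedleftaction$-invariance (your $\Phi$ is exactly $h^{-1} \inducedleftaction \blank$ acting on patterns), and invoke Lemma~\ref{lem:entorpy-bounded-above-if-strange-tiling-exists}. One small correction: since $m_0 \rightsemiaction g G_0 = g_{m_0, m_0} g \leftaction m_0$ and the paper does not normalise $g_{m_0, m_0} = e_G$, the translating element must be $h = g_{m_0, t} \, g_{m_0, m_0}^{-1}$ rather than $g_{m_0, t}$ for the identity $t \rightsemiaction E = h \leftaction (m_0 \rightsemiaction E)$ to hold — this is precisely the $g^{-1}$ with $g = g_{m_0, m_0} g_{m_0, t}^{-1}$ that the paper uses — and with that adjustment everything goes through.
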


  \begin{proof}
    According to Theorem~\ref{thm:existence-of-tiling}, there is a subset $E'$ of $G \quotient G_0$ and an $\ntuple{E, E'}$-tiling $T$ of $\mathcal{R}$. Because $G_0$ and $E$ are finite, so is $E'$. Let $m \in M$.
    Put $g = g_{m_0, m_0} g_{m_0, m}^{-1}$. Then, $g \leftaction (m \rightsemiaction E) = m_0 \rightsemiaction E$.
    Because $X$ is $\inducedleftaction$-invariant, 
    \begin{align*}
      \pi_{m \rightsemiaction E}(X)
      = \pi_{m \rightsemiaction E}(g^{-1} \inducedleftaction X)
      = g^{-1} \inducedleftaction \pi_{g \leftaction (m \rightsemiaction E)}(X)
      = g^{-1} \inducedleftaction \pi_{m_0 \rightsemiaction E}(X).
    \end{align*}
    Because $\pi_{m_0 \rightsemiaction E}(X) \subsetneqq Q^{m_0 \rightsemiaction E}$, 
    \begin{align*}
      g^{-1} \inducedleftaction \pi_{m_0 \rightsemiaction E}(X)
      \subsetneqq g^{-1} \inducedleftaction Q^{m_0 \rightsemiaction E}
      =           Q^{g^{-1} \leftaction (m_0 \rightsemiaction E)}
      =           Q^{m \rightsemiaction E}.
    \end{align*}
    Therefore, $\pi_{m \rightsemiaction E}(X) \subsetneqq Q^{m \rightsemiaction E}$. In conclusion, according to Lemma~\ref{lem:entorpy-bounded-above-if-strange-tiling-exists}, we have $\entropy_{\mathcal{F}}(X) < \log\abs{Q}$. \qed
  \end{proof}

  \section{Gardens of Eden} 
  \label{sec:gardens-of-eden}

  In this section, let $\mathcal{R} = \ntuple{\ntuple{M, G, \leftaction}, \ntuple{m_0, \family{g_{m_0, m}}_{m \in M}}}$ be a right amenable cell space and let $\mathcal{C} = \ntuple{\mathcal{R}, Q, N, \delta}$ be a semi-cellular automaton such that the stabiliser $G_0$ of $m_0$ under $\leftaction$, the set $Q$ of states, and the neighbourhood $N$ are finite, and the set $Q$ is non-empty. Furthermore, let $\Delta$ be the global transition function of $\mathcal{C}$, and let $\mathcal{F} = \net{F_i}_{i \in I}$ be a right Følner net in $\mathcal{R}$ indexed by $(I, \leq)$.

  In Theorem~\ref{thm:not-surjective-implies-less-entropy} we show that if $\Delta$ is not surjective, then the entropy of its image is less than the entropy of $Q^M$. And the converse of that statement obviously holds. In Theorem~\ref{thm:less-entropy-implies-not-pre-injective} we show that if the entropy of the image of $\Delta$ is less than the entropy of $Q^M$, then $\Delta$ is not pre-injective. And in Theorem~\ref{thm:not-pre-injective-implies-less-entropy} we show the converse of that statement. These four statements establish the Garden of Eden theorem, see Main Theorem~\ref{thm:garden-of-eden}. 




  \begin{definition} 
    Let $c$ and $c'$ be two maps from $M$ to $Q$. The set
      $\diff(c, c') = \set{m \in M \suchthat c(m) \neq c'(m)}$ 
    is called \define{difference of $c$ and $c'$}.
  \end{definition}

  \begin{definition} 
    The map $\Delta$ is called \define{pre-injective}\graffito{pre-injective} if and only if, for each tuple $(c, c') \in Q^M \times Q^M$ such that $\diff(c, c')$ is finite and $\Delta(c) = \Delta(c')$, we have $c = c'$.
  \end{definition}

  In the proof of Theorem~\ref{thm:not-surjective-implies-less-entropy}, the existence of a Garden of Eden pattern, as stated in Lemma~\ref{lem:not-surjective-yields-garden-of-eden-pattern}, is essential, which itself follows from the existence of a Garden of Eden configuration, the compactness of $Q^M$, and the continuity of $\Delta$.

  \begin{definition}
    \begin{enumerate}
      \item Let $c \from M \to Q$ be a global configuration. It is called \define{Garden of Eden configuration}\graffito{Garden of Eden configuration $c$ of $\mathcal{C}$} if and only if it is not contained in $\Delta(Q^M)$. 
      \item Let $p \from A \to Q$ be a pattern. It is called \define{Garden of Eden pattern}\graffito{Garden of Eden pattern $p$ of $\mathcal{C}$} if and only if, for each global configuration $c \in Q^M$, we have $\Delta(c)\restriction_A \neq p$.  
    \end{enumerate}
  \end{definition}

  \begin{remark}
    \begin{enumerate}
      \item The global transition function $\Delta$ is surjective if and only if there is no Garden of Eden configuration. 
      \item If $p \from A \to Q$ is a Garden of Eden pattern, then each global configuration $c \in Q^M$ with $c\restriction_A = p$ is a Garden of Eden configuration. 
      \item If there is a Garden of Eden pattern, then $\Delta$ is not surjective. 
    \end{enumerate}
  \end{remark}

  \begin{lemma} 
  \label{lem:not-surjective-yields-garden-of-eden-pattern}
    Let $\Delta$ not be surjective. There is a Garden of Eden pattern with non-empty and finite domain. 
  \end{lemma}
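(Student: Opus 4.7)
The plan is to invoke the compactness of $Q^M$ together with the continuity of $\Delta$ in the prodiscrete topology, a completely standard strategy in this area.

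First, I would equip $Q^M$ with the product topology coming from the discrete topology on $Q$. Because $Q$ is finite, Tychonoff's theorem gives that $Q^M$ is compact, and a basis for the topology is given by the cylinders $\Cyl(A, p) = \set{c \in Q^M \suchthat c\restriction_A = p}$ for $A$ a finite subset of $M$ and $p \in Q^A$.

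Next, I would check that $\Delta$ is continuous. Given any finite $B \subseteq M$, the value $\Delta(c)\restriction_B$ depends only on $c\restriction_{\bigcup_{m \in B} m \rightsemiaction N}$; this set is finite because $B$ and $N$ are finite and, for each $m$, we have $\abs{m \rightsemiaction N} \leq \abs{N}$. Consequently the preimage under $\Delta$ of any basic cylinder is a cylinder, so $\Delta$ is continuous.

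Then I would argue as follows. Since $Q^M$ is compact and $\Delta$ is continuous, the image $\Delta(Q^M)$ is compact, hence closed in $Q^M$. Because $\Delta$ is not surjective, its complement $Q^M \smallsetminus \Delta(Q^M)$ is a non-empty open set; pick any $c_0$ in it (a Garden of Eden configuration). By the basis description above, there is a finite $A \subseteq M$ such that $\Cyl(A, c_0\restriction_A) \subseteq Q^M \smallsetminus \Delta(Q^M)$. Setting $p = c_0\restriction_A$, no global configuration $c$ with $c\restriction_A = p$ lies in $\Delta(Q^M)$, which is exactly the statement that $p$ is a Garden of Eden pattern with finite domain $A$. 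If $A$ happened to be empty, I would simply replace $A$ by any non-empty finite superset $A' \supseteq A$ and $p$ by $c_0\restriction_{A'}$, which is still a Garden of Eden pattern since the cylinder only shrinks.

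The only non-routine point is the continuity of $\Delta$, but under the standing hypothesis that $G_0$ and $N$ are finite this is immediate from the bound $\abs{m \rightsemiaction N} \leq \abs{N}$. No truly new idea is needed; the whole argument is a direct transcription of the classical compactness argument for cellular automata over groups, adapted to the present left-homogeneous setting via the right semi-action $\rightsemiaction$.
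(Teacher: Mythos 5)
Your proof is correct and takes essentially the same route as the paper: a Garden of Eden configuration lies in the open complement of the closed image $\Delta(Q^M)$ in the prodiscrete topology, so its restriction to the finite base of a cylinder neighbourhood is the desired pattern. The only difference is cosmetic: the paper cites Lemma~3.3.2 of Ceccherini-Silberstein and Coornaert for the closedness of $\Delta(Q^M)$, whereas you derive it directly from compactness of $Q^M$ and the locality-based continuity of $\Delta$ (and also handle the empty-domain case explicitly), which the paper leaves implicit.
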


  \begin{proof}
    Because $\Delta$ is not surjective, there is a Garden of Eden configuration $c \in Q^M$. Equip $Q^M$ with the prodiscrete topology. According to
    \cite[Lemma~3.3.2]{ceccherini-silberstein:coornaert:2010},
    $\Delta(Q^M)$ is closed in $Q^M$. Hence, $Q^M \smallsetminus \Delta(Q^M)$ is open. Therefore, because $c \in Q^M \smallsetminus \Delta(Q^M)$, there is a non-empty and finite subset $F$ of $M$ such that 
    \begin{equation*} 
      \Cyl(c, F) = \set{c' \in Q^M \suchthat c'\restriction_F = c\restriction_F} \subseteq Q^M \smallsetminus \Delta(Q^M).
    \end{equation*}
    Hence, $c\restriction_F$ is a Garden of Eden pattern with non-empty and finite domain. \qed
  \end{proof}


  \begin{theorem} 
  \label{thm:not-surjective-implies-less-entropy}
    Let $\delta$ be $\bullet$-invariant, let $Q$ contain at least two elements, and let $\Delta$ not be surjective. Then, $\entropy_{\mathcal{F}}(\Delta(Q^M)) < \log\abs{Q}$.
  \end{theorem}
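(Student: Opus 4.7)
The plan is to reduce to Corollary~\ref{cor:nice-properties-yield-less-entropy} applied to $X = \Delta(Q^M)$. Three ingredients are needed: that $\Delta(Q^M)$ is $\inducedleftaction$-invariant, that there is a non-empty finite $E \subseteq G \quotient G_0$ with $\pi_{m_0 \rightsemiaction E}(\Delta(Q^M)) \subsetneqq Q^{m_0 \rightsemiaction E}$, and (of course) that $|Q| \geq 2$, which is given.

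First I would invoke Lemma~\ref{lem:not-surjective-yields-garden-of-eden-pattern} to obtain a Garden of Eden pattern $p \colon F \to Q$ with $F$ a non-empty finite subset of $M$. By definition of Garden of Eden pattern, $p \notin \pi_F(\Delta(Q^M))$, so $\pi_F(\Delta(Q^M)) \subsetneqq Q^F$. To put this in the form demanded by the corollary, I would set $E = \iota(F) \subseteq G \quotient G_0$; since $\iota \colon M \to G \quotient G_0$ and $m_0 \rightsemiaction \blank$ are inverse bijections (recalled in the preliminaries), $E$ is non-empty and finite, and $m_0 \rightsemiaction E = F$. Consequently $\pi_{m_0 \rightsemiaction E}(\Delta(Q^M)) \subsetneqq Q^{m_0 \rightsemiaction E}$.

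Next I would verify that $\Delta(Q^M)$ is $\inducedleftaction$-invariant. Because $\delta$ is $\bullet$-invariant, $\Delta$ is $\inducedleftaction$-equivariant: for every $g \in G$ and every $c \in Q^M$, $g \inducedleftaction \Delta(c) = \Delta(g \inducedleftaction c) \in \Delta(Q^M)$. Since $g \inducedleftaction \blank$ is a bijection of $Q^M$ with inverse $g^{-1} \inducedleftaction \blank$, this containment holds in both directions, giving $g \inducedleftaction \Delta(Q^M) = \Delta(Q^M)$ for every $g \in G$.

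With these ingredients in hand, Corollary~\ref{cor:nice-properties-yield-less-entropy} applied with $X = \Delta(Q^M)$ and the $E$ constructed above yields $\entropy_{\mathcal{F}}(\Delta(Q^M)) < \log\abs{Q}$. There is no genuine obstacle here: the whole work of pushing the Garden of Eden pattern through a tiling and summing up the resulting deficits has already been packaged into Lemma~\ref{lem:entorpy-bounded-above-if-strange-tiling-exists} and Corollary~\ref{cor:nice-properties-yield-less-entropy}; the only mild subtlety is remembering that an arbitrary finite $F \subseteq M$ can be rewritten as $m_0 \rightsemiaction E$ for $E = \iota(F)$, so that the corollary is applicable.
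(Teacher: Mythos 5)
Your proposal is correct and follows essentially the same route as the paper: obtain a Garden of Eden pattern via Lemma~\ref{lem:not-surjective-yields-garden-of-eden-pattern}, pull its domain back to $E = (m_0 \rightsemiaction \blank)^{-1}(F)$ (which is exactly your $\iota(F)$, as these maps are mutually inverse), establish $\inducedleftaction$-invariance of $\Delta(Q^M)$ from equivariance, and conclude with Corollary~\ref{cor:nice-properties-yield-less-entropy}. No gaps.
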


  \begin{proof}
    According to Lemma~\ref{lem:not-surjective-yields-garden-of-eden-pattern}, there is a Garden of Eden pattern $p \from F \to Q$ with non-empty and finite domain. Let $E = (m_0 \rightsemiaction \blank)^{-1}(F)$. Then, $m_0 \rightsemiaction E = F$ and, because $\rightsemiaction$ is free, $\abs{E} = \abs{F} < \infty$. Because $p$ is a Garden of Eden pattern, $p \notin \pi_{m_0 \rightsemiaction E}(\Delta(Q^M))$. Hence, $\pi_{m_0 \rightsemiaction E}(\Delta(Q^M)) \subsetneqq Q^{m_0 \rightsemiaction E}$. Moreover, according to
    \cite[Item~1 of Theorem~2]{wacker:automata:2016},
    the map $\Delta$ is $\inducedleftaction$-equivariant. Hence, for each $g \in G$, we have $g \inducedleftaction \Delta(Q^M) = \Delta(g \inducedleftaction Q^M) = \Delta(Q^M)$. In other words, $\Delta(Q^M)$ is $\inducedleftaction$-invariant. Thus, according to Corollary~\ref{cor:nice-properties-yield-less-entropy}, we have $\entropy_{\mathcal{F}}(\Delta(Q^M)) < \log\abs{Q}$. \qed
  \end{proof}

  In the proof of Theorem~\ref{thm:less-entropy-implies-not-pre-injective}, the fact that enlarging each element of $\mathcal{F}$ does not increase entropy, as stated in the next lemma, is essential.

  \begin{lemma}
  \label{lem:entropy-invariant-under-closure-net-change}
    Let $X$ be a subset of $Q^M$ and let $E$ be a finite subset of $G \quotient G_0$ such that $G_0 \in E$. Then,
      $\entropy_{\net{F_i^{+E}}_{i \in I}}(X) \leq \entropy_{\mathcal{F}}(X)$.  
  \end{lemma}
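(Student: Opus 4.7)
The plan is to compare the projections onto $F_i^{+E}$ against those onto $F_i$ and show that the extra cells contribute only a boundary term, which is negligible since $\mathcal{F}$ is a right Følner net.

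First I would note that because $G_0 \in E$, Item~\ref{it:properties-of-interior-closure-and-boundary:neutral-element} of Lemma~\ref{lem:properties-of-interior-closure-and-boundary} gives $F_i \subseteq F_i^{+E}$ for each $i \in I$. In particular $|F_i^{+E}| \geq |F_i|$, and moreover $F_i^{+E} \smallsetminus F_i \subseteq F_i^{+E} \smallsetminus F_i^{-E} = \boundary_E F_i$. The restriction map $Q^{F_i^{+E}} \to Q^{F_i} \times Q^{F_i^{+E} \smallsetminus F_i}$ is a bijection, and under it $\pi_{F_i^{+E}}(X)$ embeds into $\pi_{F_i}(X) \times Q^{F_i^{+E} \smallsetminus F_i}$, so
\begin{equation*}
  \abs{\pi_{F_i^{+E}}(X)} \leq \abs{\pi_{F_i}(X)} \cdot \abs{Q}^{\abs{\boundary_E F_i}}.
\end{equation*}

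Next I would take logarithms and divide by $\abs{F_i^{+E}}$; using $\abs{F_i^{+E}} \geq \abs{F_i}$ on the right, I obtain
\begin{equation*}
  \frac{\log \abs{\pi_{F_i^{+E}}(X)}}{\abs{F_i^{+E}}} \leq \frac{\log \abs{\pi_{F_i}(X)}}{\abs{F_i}} + \frac{\abs{\boundary_E F_i}}{\abs{F_i}} \cdot \log \abs{Q}.
\end{equation*}

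Finally, since $E$ is finite and $\mathcal{F}$ is a right Følner net, Theorem~\ref{thm:boundary-characterisation-of-folner-net} yields $\lim_{i \in I} \abs{\boundary_E F_i}/\abs{F_i} = 0$. Passing to $\limsup$ on both sides gives $\entropy_{\net{F_i^{+E}}_{i \in I}}(X) \leq \entropy_{\mathcal{F}}(X)$. There is no real obstacle here; the only subtlety is ensuring the denominator $\abs{F_i^{+E}}$ can be safely replaced by $\abs{F_i}$ on the right hand side, which is exactly what the hypothesis $G_0 \in E$ provides.
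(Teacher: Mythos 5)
Your proof is correct and follows essentially the same route as the paper: bound $\pi_{F_i^{+E}}(X)$ by $\pi_{F_i}(X) \times Q^{F_i^{+E} \smallsetminus F_i}$, use $F_i^{+E} \smallsetminus F_i \subseteq \boundary_E F_i$, and conclude via Theorem~\ref{thm:boundary-characterisation-of-folner-net}. Your explicit remark on replacing the denominator $\abs{F_i^{+E}}$ by $\abs{F_i}$ is a point the paper glosses over, and it is handled correctly.
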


  \begin{proof}
    Let $i \in I$. According to Item~\ref{it:properties-of-interior-closure-and-boundary:neutral-element} of Lemma~\ref{lem:properties-of-interior-closure-and-boundary}, we have $F_i^{-E} \subseteq F_i \subseteq F_i^{+E}$. Hence, $\pi_{F_i^{+E}}(X) \subseteq \pi_{F_i}(X) \times Q^{F_i^{+E} \smallsetminus F_i}$ and $F_i^{+E} \smallsetminus F_i \subseteq \boundary_E F_i$. Thus,
    \begin{equation*}
      \log\abs{\pi_{F_i^{+E}}(X)}
      \leq \log\abs{\pi_{F_i}(X)} + \abs{F_i^{+E} \smallsetminus F_i} \cdot \log\abs{Q} 
      \leq \log\abs{\pi_{F_i}(X)} + \abs{\boundary_E F_i} \cdot \log\abs{Q}.
    \end{equation*}
    Therefore, according to Theorem~\ref{thm:boundary-characterisation-of-folner-net},
    \begin{equation*}
      \entropy_{\net{F_i^{+E}}_{i \in I}}(X)
      \leq \limsup_{i \in I} \frac{\log\abs{\pi_{F_i}(X)}}{\abs{F_i}} + \parens*{\lim_{i \in I} \frac{\abs{\boundary_E F_i}}{\abs{F_i}}} \cdot \log\abs{Q}\\
      =    \entropy_{\mathcal{F}}(X). \tag*{\qed}
    \end{equation*}
  \end{proof}

  \begin{theorem} 
  \label{thm:less-entropy-implies-not-pre-injective}
    Let $\entropy_{\mathcal{F}}(\Delta(Q^M)) < \log\abs{Q}$. Then, $\Delta$ is not pre-injective.
  \end{theorem}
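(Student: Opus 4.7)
I would prove the contrapositive: assuming $\Delta$ is pre-injective, I derive $\entropy_{\mathcal{F}}(\Delta(Q^M)) \geq \log\abs{Q}$, which together with the upper bound from Item~\ref{it:entropy-basic-facts:bound} of Lemma~\ref{lem:entropy-basic-facts} forces equality and contradicts the hypothesis. As in the proof of Theorem~\ref{thm:entropy-does-non-increase}, I may assume without loss of generality that $G_0 \in N$.

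Fix an arbitrary configuration $c_0 \in Q^M$. For each non-empty finite subset $F$ of $M$, set
\[
  X_F = \set{c \in Q^M \suchthat c\restriction_{M \smallsetminus F} = c_0\restriction_{M \smallsetminus F}},
\]
so $\abs{X_F} = \abs{Q}^{\abs{F}}$. Any two distinct elements of $X_F$ differ on a non-empty subset of the finite set $F$; by pre-injectivity the restriction $\Delta\restriction_{X_F}$ is therefore injective, giving $\abs{\Delta(X_F)} = \abs{Q}^{\abs{F}}$. Next, since any $c \in X_F$ agrees with $c_0$ on $M \smallsetminus F$, Lemma~\ref{lem:global-transition-function-and-interior-closure} yields $\Delta(c)\restriction_{(M \smallsetminus F)^{-N}} = \Delta(c_0)\restriction_{(M \smallsetminus F)^{-N}}$, and Item~\ref{it:properties-of-interior-closure-and-boundary:complement} of Lemma~\ref{lem:properties-of-interior-closure-and-boundary} identifies $(M \smallsetminus F)^{-N}$ with $M \smallsetminus F^{+N}$. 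Hence all configurations in $\Delta(X_F)$ coincide outside $F^{+N}$, so their restrictions to $F^{+N}$ are pairwise distinct, and
\[
  \abs{\pi_{F^{+N}}(\Delta(Q^M))} \geq \abs{Q}^{\abs{F}}.
\]

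Applying this with $F = F_i$ and dividing by $\abs{F_i^{+N}}$,
\[
  \frac{\log\abs{\pi_{F_i^{+N}}(\Delta(Q^M))}}{\abs{F_i^{+N}}} \geq \frac{\abs{F_i}}{\abs{F_i^{+N}}} \log\abs{Q}.
\]
Because $G_0 \in N$, Item~\ref{it:properties-of-interior-closure-and-boundary:neutral-element} of Lemma~\ref{lem:properties-of-interior-closure-and-boundary} gives $F_i \subseteq F_i^{+N}$ and $F_i^{+N} \smallsetminus F_i \subseteq \boundary_N F_i$, and Theorem~\ref{thm:boundary-characterisation-of-folner-net} then makes $\abs{F_i}/\abs{F_i^{+N}}$ tend to $1$. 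Passing to $\limsup$,
\[
  \entropy_{\net{F_i^{+N}}_{i \in I}}(\Delta(Q^M)) \geq \log\abs{Q},
\]
and Lemma~\ref{lem:entropy-invariant-under-closure-net-change} concludes $\entropy_{\mathcal{F}}(\Delta(Q^M)) \geq \entropy_{\net{F_i^{+N}}_{i \in I}}(\Delta(Q^M)) \geq \log\abs{Q}$, the desired contradiction.

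The main obstacle is the index swap: the injection $X_F \hookrightarrow \pi_{F^{+N}}(\Delta(Q^M))$ supplied by pre-injectivity naturally produces a lower bound for the entropy measured along the enlarged net $\net{F_i^{+N}}_{i \in I}$ rather than along $\mathcal{F}$ itself. Lemma~\ref{lem:entropy-invariant-under-closure-net-change} is tailor-made to bridge this gap via the Følner condition, which is why it was developed just before the theorem.
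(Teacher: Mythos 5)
Your proof is correct and is essentially the contrapositive of the paper's own argument: the paper fixes a single index $i$ where $\abs{\pi_{F_i^{+N}}(\Delta(Q^M))} < \abs{Q}^{\abs{F_i}}$ and applies the pigeonhole principle to $X' = \{c \in Q^M : c\restriction_{M \smallsetminus F_i} \equiv q\}$ to produce two configurations with finite difference and equal image, whereas you assume pre-injectivity, inject each $X_{F_i}$ into $\pi_{F_i^{+N}}(\Delta(Q^M))$, and pass to the limit. Both routes rest on exactly the same ingredients --- Lemma~\ref{lem:global-transition-function-and-interior-closure}, the identity $(M \smallsetminus F_i)^{-N} = M \smallsetminus F_i^{+N}$ from Lemma~\ref{lem:properties-of-interior-closure-and-boundary}, and Lemma~\ref{lem:entropy-invariant-under-closure-net-change} --- so this is the same proof read in the opposite direction.
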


  \begin{proof}
    Suppose, without loss of generality, that $G_0 \in N$. Let $X = \Delta(Q^M)$. According to Lemma~\ref{lem:entropy-invariant-under-closure-net-change}, we have $\entropy_{\net{F_i^{+N}}_{i \in I}}(X) \leq \entropy_{\mathcal{F}}(X) < \log\abs{Q}$. Hence, there is an $i \in I$ such that
    \begin{equation*}
      \frac{\log\abs{\pi_{F_i^{+N}}(X)}}{\abs{F_i}} < \log\abs{Q}.
    \end{equation*}
    Thus, $\abs{\pi_{F_i^{+N}}(X)} < \abs{Q}^{\abs{F_i}}$. Furthermore, let $q \in Q$ and let $X' = \set{c \in Q^M \suchthat c\restriction_{M \smallsetminus F_i} \equiv q}$. Then, $\abs{Q}^{\abs{F_i}} = \abs{X'}$. Hence,
      $\abs{\pi_{F_i^{+N}}(X)} < \abs{X'}$.
    Moreover, according to Item~\ref{it:properties-of-interior-closure-and-boundary:complement} of Lemma~\ref{lem:properties-of-interior-closure-and-boundary}, we have $(M \smallsetminus F_i)^{-N} = M \smallsetminus F_i^{+N}$. Hence, for each $(c, c') \in X' \times X'$, according to Lemma~\ref{lem:global-transition-function-and-interior-closure}, we have $\Delta(c)\restriction_{M \smallsetminus F_i^{+N}} = \Delta(c')\restriction_{M \smallsetminus F_i^{+N}}$. Therefore, 
    \begin{equation*}
      \abs{\Delta(X')} =    \abs{\pi_{F_i^{+N}}(\Delta(X'))}
                       \leq \abs{\pi_{F_i^{+N}}(\Delta(Q^M))}
                       =    \abs{\pi_{F_i^{+N}}(X)}
                       <    \abs{X'}.
    \end{equation*}
    Hence, there are $c$, $c' \in X'$ such that $c \neq c'$ and $\Delta(c) = \Delta(c')$. Thus, because $\diff(c, c') \subseteq F_i$ is finite, the map $\Delta$ is not pre-injective. \qed
  \end{proof}

  %

%

  In the proof of Theorem~\ref{thm:not-pre-injective-implies-less-entropy}, the statement of Lemma~\ref{lem:exchanging-pattern-by-other-pattern-with-same-image-yields-configuration-with-same-image} is essential, which says that if two distinct patterns have the same image and we replace each occurrence of the first by the second in a configuration, we get a new configuration in which the first pattern does not occur and that has the same image as the original one.


  \begin{definition}
    Identify $M$ with $G \quotient G_0$ by $\iota \from m \mapsto G_{m_0, m}$. Let
    \begin{equation*}
      \inducedrightsemiaction \from M \times \bigcup_{A \subseteq M} Q^A \to     \bigcup_{A \subseteq M} Q^A, \quad
                                                                  (m, p) \mapsto \left[
                                                                                    \begin{aligned}
                                                                                      m \rightsemiaction \domain(p) &\to     Q,\\
                                                                                               m \rightsemiaction a &\mapsto p(a).
                                                                                    \end{aligned}
                                                                                  \right]
    \end{equation*}
  \end{definition}

  \begin{remark} 
    Let $A$ be a subset of $M$, let $p$ be map from $A$ to $Q$, and let $m$ be an element of $M$. Then, $m \inducedrightsemiaction p = g_{m_0, m} \inducedleftaction p$. 
  \end{remark}

  \begin{definition} 
    Identify $M$ with $G \quotient G_0$ by $\iota \from m \mapsto G_{m_0, m}$, let $A$ be a subset of $M$, let $p$ be map from $A$ to $Q$, let $c$ be map from $M$ to $Q$, let $m$ be an element of $M$. The pattern $p$ is said to \define{occur at $m$ in $c$}\graffito{$p$ occurs at $m$ in $c$} and we write $p \occurs_m c$\graffito{$p \occurs_m c$} if and only if
      $m \inducedrightsemiaction p = c\restriction_{m \rightsemiaction A}$.
  \end{definition}

  \begin{lemmax}{X} 
  \label{lem:m-liberation-A-is-either-outside-of-A-or-inside-of-closure-of-A}
    Let $A$ be a subset of $M$, and let $E$ and $E'$ be two subsets of $G \quotient G_0$ such that
      $\set{g^{-1} \cdot e' \suchthat e, e' \in E, g \in e} \subseteq E'$. 
    For each element $m \in M$, we have $m \rightsemiaction E \subseteq M \smallsetminus A$ or $m \rightsemiaction E \subseteq A^{+E'}$.
  \end{lemmax}

  \begin{proof}
%
    Let $m \in M$ such that $m \rightsemiaction E \nsubseteq M \smallsetminus A$. Then, $(m \rightsemiaction E) \cap A \neq \emptyset$. Hence, there is an $e' \in E$ such that $m \rightsemiaction e' \in A$. Let $e \in E$. According to Lemma~\ref{lem:rightsemiaction-can-be-undone}, there is a $g \in e$ such that $(m \rightsemiaction e) \rightsemiaction g^{-1} \cdot e' = m \rightsemiaction e'$. Because $g^{-1} \cdot e' \in E'$ and $m \rightsemiaction e' \in A$, we have $(m \rightsemiaction e) \rightsemiaction E' \cap A \neq \emptyset$. Thus, $m \rightsemiaction e \in A^{+E'}$. Therefore, $m \rightsemiaction E \subseteq A^{+E'}$. \qed 
  \end{proof}

  \begin{lemma} 
  \label{lem:exchanging-pattern-by-other-pattern-with-same-image-yields-configuration-with-same-image}
    Identify $M$ with $G \quotient G_0$ by $\iota \from m \mapsto G_{m_0, m}$, let $A$ be a subset of $M$, let $N'$ be the subset $\set{g^{-1} \cdot n' \suchthat n, n' \in N, g \in n}$ of $G \quotient G_0$, and let $p$ and $p'$ be two maps from $A^{+N'}$ to $Q$ such that $p\restriction_{A^{+N'} \smallsetminus A} = p'\restriction_{A^{+N'} \smallsetminus A}$ and $\Delta_{A^{+N'}}^-(p) = \Delta_{A^{+N'}}^-(p')$. Furthermore, let $c$ be a map from $M$ to $Q$ and let $S$ be a subset of $M$, such that the family $\family{s \rightsemiaction A^{+N'}}_{s \in S}$ is pairwise disjoint and, for each cell $s \in S$, we have $p \occurs_s c$. Put
    \begin{equation*} 
      c' = c\restriction_{M \smallsetminus (\bigcup_{s \in S} s \rightsemiaction A^{+N'})} \times \coprod_{s \in S} s \inducedrightsemiaction p'.
    \end{equation*}
    Then, for each cell $s \in S$, we have $p' \occurs_s c'$, and $\Delta(c) = \Delta(c')$. In particular, if $p \neq p'$, then, for each cell $s \in S$, we have $p \not\occurs_s c'$.
  \end{lemma}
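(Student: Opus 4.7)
I would split the lemma into its three claims. The assertion $p' \occurs_s c'$ for every $s \in S$ is immediate: pairwise disjointness of $\family{s \rightsemiaction A^{+N'}}_{s \in S}$ forces $c'\restriction_{s \rightsemiaction A^{+N'}} = s \inducedrightsemiaction p'$ by the very definition of $c'$, and this is exactly $p' \occurs_s c'$. The \enquote{in particular} clause then follows because $s \inducedrightsemiaction \blank$ coincides with $g_{m_0, s} \inducedleftaction \blank$ and is hence injective in the pattern argument, so $s \inducedrightsemiaction p = c'\restriction_{s \rightsemiaction A^{+N'}} = s \inducedrightsemiaction p'$ would give $p = p'$.

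The bulk of the work is proving $\Delta(c) = \Delta(c')$. First I would show that $c$ and $c'$ agree on $M \smallsetminus D$, where $D \mathrel{:=} \bigcup_{s \in S} s \rightsemiaction A$: outside $\bigcup_{s \in S} s \rightsemiaction A^{+N'}$ they coincide by construction, while on $s \rightsemiaction (A^{+N'} \smallsetminus A)$ (which equals $s \rightsemiaction A^{+N'} \smallsetminus s \rightsemiaction A$ by freeness of $\rightsemiaction$) both restrict to the same pattern because $p$ and $p'$ agree on $A^{+N'} \smallsetminus A$. Now fix $m \in M$. If $(m \rightsemiaction N) \cap D = \emptyset$, Lemma~\ref{lem:global-transition-function-and-interior-closure} (locality) immediately gives $\Delta(c)(m) = \Delta(c')(m)$. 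Otherwise there is $s \in S$ with $(m \rightsemiaction N) \cap (s \rightsemiaction A) \neq \emptyset$, and Lemma~\ref{lem:m-liberation-A-is-either-outside-of-A-or-inside-of-closure-of-A} applied with $s \rightsemiaction A$, $N$, and $N'$ in place of $A$, $E$, and $E'$ (the inclusion $\set{g^{-1} \cdot n' \suchthat n, n' \in N, g \in n} \subseteq N'$ being the very definition of $N'$) forces $m \rightsemiaction N \subseteq (s \rightsemiaction A)^{+N'}$. A short verification shows $G_0 \cdot N' \subseteq N'$, so Item~\ref{it:properties-of-interior-closure-and-boundary:commute-with-liberation} of Lemma~\ref{lem:properties-of-interior-closure-and-boundary} lets me replace $(s \rightsemiaction A)^{+N'}$ by $s \rightsemiaction A^{+N'}$, whence $m \in (s \rightsemiaction A^{+N'})^{-N} = s \rightsemiaction (A^{+N'})^{-N}$ and $m = s \rightsemiaction m''$ for some $m'' \in (A^{+N'})^{-N}$.

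To conclude, I would pick arbitrary extensions $\hat{c}, \hat{c}' \in Q^M$ with $\hat{c}\restriction_{A^{+N'}} = p$ and $\hat{c}'\restriction_{A^{+N'}} = p'$ and exploit $\inducedleftaction$-equivariance of $\Delta$. Under the identification of $M$ with $G \quotient G_0$, the configuration $g_{m_0, s} \inducedleftaction \hat{c}$ restricts to $s \inducedrightsemiaction p$ on $s \rightsemiaction A^{+N'}$, so it agrees with $c$ there; Lemma~\ref{lem:global-transition-function-and-interior-closure} combined with equivariance then gives $\Delta(c)(m) = \Delta(g_{m_0, s} \inducedleftaction \hat{c})(m) = \Delta(\hat{c})(m'')$, and the symmetric computation gives $\Delta(c')(m) = \Delta(\hat{c}')(m'')$. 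Since $m'' \in (A^{+N'})^{-N}$, the hypothesis $\Delta_{A^{+N'}}^-(p) = \Delta_{A^{+N'}}^-(p')$ delivers $\Delta(\hat{c})(m'') = \Delta(\hat{c}')(m'')$, finishing the argument. The main obstacle is the bookkeeping around shifts: one repeatedly has to translate between the semi-action $s \rightsemiaction \blank$ and the group action $g_{m_0, s} \inducedleftaction \blank$, confirm the defect-absorbing hypotheses $G_0 \cdot N \subseteq N$ and $G_0 \cdot N' \subseteq N'$, and apply the commutation identities of Lemma~\ref{lem:properties-of-interior-closure-and-boundary} at each instance.
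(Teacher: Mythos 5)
Your proposal is correct and follows essentially the same route as the paper: the same reduction to $c$ and $c'$ agreeing outside $\bigcup_{s \in S} s \rightsemiaction A$, the same case split on whether $m \rightsemiaction N$ meets that set, and the same appeal to Lemma~X and Item~\ref{it:properties-of-interior-closure-and-boundary:commute-with-liberation} of Lemma~\ref{lem:properties-of-interior-closure-and-boundary} to land $m \rightsemiaction N$ inside $s \rightsemiaction A^{+N'}$. Your final step via extensions $\hat{c}, \hat{c}'$ and equivariance is just a more explicit rendering of the paper's terse identity $\Delta(c)(m) = \Delta_{A^{+N'}}^-(p)(m) = \Delta_{A^{+N'}}^-(p')(m) = \Delta(c')(m)$, which tacitly shifts the pattern $\Delta_{A^{+N'}}^-(p)$ by $s$ in the same way.
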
 

  \begin{proof}
    For each $s \in S$, we have $\domain(s \inducedrightsemiaction p) = \domain(s \inducedrightsemiaction p') = s \rightsemiaction A^{+N'}$. Hence, $c'$ is well-defined. Moreover, for each $s \in S$, we have $(s \inducedrightsemiaction p)\restriction_{(s \rightsemiaction A^{+N'}) \smallsetminus (s \rightsemiaction A)} = (s \inducedrightsemiaction p')\restriction_{(s \rightsemiaction A^{+N'}) \smallsetminus (s \rightsemiaction A)}$.

    Let $m \in M \smallsetminus (\bigcup_{s \in S} s \rightsemiaction A)$. If $m \in M \smallsetminus (\bigcup_{s \in S} s \rightsemiaction A^{+N'})$, then $c'(m) = c(m)$. And, if there is an $s \in S$ such that $m \in s \rightsemiaction A^{+N'}$, then, because $m \notin s \rightsemiaction A$, we have $c'(m) = (s \inducedrightsemiaction p')(m) = (s \inducedrightsemiaction p)(m) = c(m)$. Therefore,
    \begin{equation*}
      c' = c\restriction_{M \smallsetminus (\bigcup_{s \in S} s \rightsemiaction A)} \times \coprod_{s \in S} s \inducedrightsemiaction (p'\restriction_A).
    \end{equation*}

    Let $m \in M$.
    \begin{description}
      \item[Case 1:] $m \rightsemiaction N \subseteq M \smallsetminus (\bigcup_{s \in S} s \rightsemiaction A)$. Then, $c'\restriction_{m \rightsemiaction N} = c\restriction_{m \rightsemiaction N}$. Hence, $\Delta(c')(m) = \Delta(c)(m)$.
      \item[Case 2:] $m \rightsemiaction N \nsubseteq M \smallsetminus (\bigcup_{s \in S} s \rightsemiaction A)$. Then, there is an $s \in S$ such that $m \rightsemiaction N \nsubseteq M \smallsetminus (s \rightsemiaction A)$. Thus, according to Lemma~\ref{lem:m-liberation-A-is-either-outside-of-A-or-inside-of-closure-of-A}, we have $m \rightsemiaction N \subseteq (s \rightsemiaction A)^{+N'}$. Hence, because $G_0 \cdot N' \subseteq N'$, according to Item~\ref{it:properties-of-interior-closure-and-boundary:commute-with-liberation} of Lemma~\ref{lem:properties-of-interior-closure-and-boundary}, we have $m \rightsemiaction N \subseteq s \rightsemiaction A^{+N'}$ and hence $m \in (s \rightsemiaction A^{+N'})^{-N}$. Therefore, because $c\restriction_{s \rightsemiaction A^{+N'}} = s \inducedrightsemiaction p$, $\Delta_{A^{+N'}}^-(p) = \Delta_{A^{+N'}}^-(p')$, and $c'\restriction_{s \rightsemiaction A^{+N'}} = s \inducedrightsemiaction p'$, 
            \begin{align*}
              \Delta(c)(m)
              = \Delta_{A^{+N'}}^-(p)(m)
              = \Delta_{A^{+N'}}^-(p')(m)
              = \Delta(c')(m).
            \end{align*}
    \end{description}
    In either case, $\Delta(c)(m) = \Delta(c')(m)$. Therefore, $\Delta(c) = \Delta(c')$. \qed
  \end{proof}

  \begin{theorem} 
  \label{thm:not-pre-injective-implies-less-entropy}
    Let $\delta$ be $\bullet$-invariant, let $Q$ contain at least two elements, and let $\Delta$ not be pre-injective. Then, $\entropy_{\mathcal{F}}(\Delta(Q^M)) < \log\abs{Q}$.
  \end{theorem}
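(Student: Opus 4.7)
The plan is to mirror the argument of Theorem~\ref{thm:not-surjective-implies-less-entropy} by extracting, from the failure of pre-injectivity, two distinct patterns on a common finite region that $\Delta$ identifies, and then restricting $Q^M$ to configurations that avoid one of these patterns at every cell of a suitable tiling. Concretely, I would first fix $c_1, c_2 \in Q^M$ with $c_1 \neq c_2$, $\diff(c_1, c_2)$ finite, and $\Delta(c_1) = \Delta(c_2)$; choose a non-empty and finite $A \subseteq M$ containing $\diff(c_1, c_2)$; and set $N' = \set{g^{-1} \cdot n' \suchthat n, n' \in N, g \in n}$. Finiteness of $N$ and $G_0$ makes $N'$ finite, and taking $n' = n$ together with any $g \in n$ shows $G_0 \in N'$, so $A \subseteq A^{+N'}$ and $A^{+N'}$ is finite by Item~\ref{it:properties-of-interior-closure-and-boundary:finite} of Lemma~\ref{lem:properties-of-interior-closure-and-boundary}. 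The patterns $p_j = c_j\restriction_{A^{+N'}}$ then satisfy $p_1 \neq p_2$, $p_1\restriction_{A^{+N'} \smallsetminus A} = p_2\restriction_{A^{+N'} \smallsetminus A}$, and $\Delta_{A^{+N'}}^-(p_1) = \Delta_{A^{+N'}}^-(p_2)$.

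Next, I would invoke Theorem~\ref{thm:existence-of-tiling} for $E = A^{+N'}$ to obtain an $\ntuple{A^{+N'}, E'}$-tiling $T$ of $\mathcal{R}$ with $E'$ finite; in particular, the family $\family{t \rightsemiaction A^{+N'}}_{t \in T}$ is pairwise disjoint. Define
\[
  Y = \set{c \in Q^M \suchthat \ForEach t \in T \Holds c\restriction_{t \rightsemiaction A^{+N'}} \neq t \inducedrightsemiaction p_1}.
\]
The crucial claim is $\Delta(Q^M) = \Delta(Y)$. For the non-trivial inclusion, given $c \in Q^M$, I would put $S_c = \set{t \in T \suchthat p_1 \occurs_t c}$ and invoke Lemma~\ref{lem:exchanging-pattern-by-other-pattern-with-same-image-yields-configuration-with-same-image} (whose pairwise-disjointness hypothesis is supplied by the tiling) to produce a configuration $c'$ with $\Delta(c') = \Delta(c)$ that agrees with $c$ outside $\bigcup_{t \in S_c} (t \rightsemiaction A^{+N'})$ and equals $t \inducedrightsemiaction p_2$ on $t \rightsemiaction A^{+N'}$ for each $t \in S_c$. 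Using the tiling's disjointness, at each $t \in T \smallsetminus S_c$ we have $c'\restriction_{t \rightsemiaction A^{+N'}} = c\restriction_{t \rightsemiaction A^{+N'}} \neq t \inducedrightsemiaction p_1$ by the very definition of $S_c$, while at each $t \in S_c$ we have $c'\restriction_{t \rightsemiaction A^{+N'}} = t \inducedrightsemiaction p_2 \neq t \inducedrightsemiaction p_1$ because $p_1 \neq p_2$ and $t \inducedrightsemiaction \blank$ is injective on patterns with domain $A^{+N'}$; hence $c' \in Y$.

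Once $\Delta(Q^M) = \Delta(Y)$ is established, the conclusion is immediate: for every $t \in T$ the pattern $t \inducedrightsemiaction p_1$ lies in $Q^{t \rightsemiaction A^{+N'}} \smallsetminus \pi_{t \rightsemiaction A^{+N'}}(Y)$, so Lemma~\ref{lem:entorpy-bounded-above-if-strange-tiling-exists} applied to $Y$ with $E = A^{+N'}$ yields $\entropy_{\mathcal{F}}(Y) < \log\abs{Q}$, and Theorem~\ref{thm:entropy-does-non-increase} then gives
\[
  \entropy_{\mathcal{F}}(\Delta(Q^M)) = \entropy_{\mathcal{F}}(\Delta(Y)) \leq \entropy_{\mathcal{F}}(Y) < \log\abs{Q}.
\]
The principal obstacle is the verification that the simultaneous exchange actually lands in $Y$: one has to exclude that replacing $p_1$ by $p_2$ at the cells of $S_c$ accidentally creates a new occurrence of $p_1$ at some other cell of $T$. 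This is precisely where the pairwise disjointness of $\family{t \rightsemiaction A^{+N'}}_{t \in T}$ is indispensable, since it ensures that the modifications at distinct cells of the tiling touch mutually disjoint parts of $M$ and thus cannot interact.
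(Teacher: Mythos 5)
Your proposal is correct and follows essentially the same route as the paper's proof: extract two distinct patterns $p, p'$ on $A^{+N'}$ with the same image under $\Delta_{A^{+N'}}^-$, tile with $E = A^{+N'}$, define $Y$ as the configurations avoiding $p$ at every tile, show $\Delta(Q^M) = \Delta(Y)$ via Lemma~\ref{lem:exchanging-pattern-by-other-pattern-with-same-image-yields-configuration-with-same-image}, and conclude with Lemma~\ref{lem:entorpy-bounded-above-if-strange-tiling-exists} and Theorem~\ref{thm:entropy-does-non-increase}. Your explicit check that the exchanged configuration really lands in $Y$ (using the disjointness of $\family{t \rightsemiaction A^{+N'}}_{t \in T}$ at the untouched tiles) is a detail the paper leaves implicit, and is a welcome addition.
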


  \begin{proof}
    Suppose, without loss of generality, that $G_0 \in N$. Identify $M$ with $G \quotient G_0$ by $\iota \from m \mapsto G_{m_0, m}$.

    Because $\Delta$ is not pre-injective, there are $c$, $c' \in Q^M$ such that $\diff(c, c')$ is finite, $\Delta(c) = \Delta(c')$, and $c \neq c'$. Put $A = \diff(c, c')$, put $N' = \set{g^{-1} \cdot n' \suchthat n, n' \in N, g \in n}$, put $E = A^{+N'}$, and put $p = c\restriction_E$ and $p' = c'\restriction_E$. Because $\Delta(c) = \Delta(c')$, we have $\Delta_{A^{+N'}}^-(p) = \Delta_{A^{+N'}}^-(p')$. 

    Because $N$ is finite and, for each $n \in N$, we have $\abs{n} = \abs{G_0} < \infty$, the set $N'$ is finite. Moreover, $G_0 \cdot N' \subseteq N'$. According to Item~\ref{it:properties-of-interior-closure-and-boundary:neutral-element} of Lemma~\ref{lem:properties-of-interior-closure-and-boundary}, because $G_0 \in N'$ and $A \neq \emptyset$, we have $E \supseteq A$ and hence $E$ is non-empty. According to Item~\ref{it:properties-of-interior-closure-and-boundary:finite} of Lemma~\ref{lem:properties-of-interior-closure-and-boundary}, because $G_0$, $A$, and $N'$ are finite, so is $E$. Because $E$ is non-empty, according to Theorem~\ref{thm:existence-of-tiling}, there is a subset $E'$ of $G \quotient G_0$ and an $\ntuple{E, E'}$-tiling $T$ of $\mathcal{R}$. Because $G_0$ and $E$ are non-empty and finite, so is $E'$.

    Let
      $Y = \set{y \in Q^M \suchthat \ForEach t \in T \Holds p \not\occurs_t y}$. 
    For each $t \in T$, we have $t \inducedrightsemiaction p \notin \pi_{t \rightsemiaction E}(Y)$ and therefore $\pi_{t \rightsemiaction E}(Y) \subsetneqq Q^{t \rightsemiaction E}$. According to Lemma~\ref{lem:entorpy-bounded-above-if-strange-tiling-exists}, we have $\entropy_{\mathcal{F}}(Y) < \log\abs{Q}$. Hence, according to Theorem~\ref{thm:entropy-does-non-increase}, we have $\entropy_{\mathcal{F}}(\Delta(Y)) < \log\abs{Q}$.

    Let $x \in Q^M$. Put $S = \set{t \in T \suchthat p \occurs_t x}$. According to Lemma~\ref{lem:exchanging-pattern-by-other-pattern-with-same-image-yields-configuration-with-same-image}, there is an $x' \in Q^M$ such that $x' \in Y$ and $\Delta(x) = \Delta(x')$. Therefore, $\Delta(Q^M) = \Delta(Y)$. In conclusion, $\entropy_{\mathcal{F}}(Q^M) < \log\abs{Q}$. \qed
  \end{proof}

  \begin{main-theorem}[Garden of Eden theorem; Edward Forrest Moore, 1962; John R. Myhill, 1963] 
  \label{thm:garden-of-eden} 
    Let $\mathcal{M} = \ntuple{M, G, \leftaction}$ be a right amenable left homogeneous space with finite stabilisers and let $\Delta$ be the global transition function of a cellular automaton over $\mathcal{M}$ with finite set of states and finite neighbourhood. The map $\Delta$ is surjective if and only if it is pre-injective. 
  \end{main-theorem} 

  \begin{proof}
    There is a coordinate system $\mathcal{K} = \ntuple{m_0, \family{g_{m_0, m}}_{m \in M}}$ such that the cell space $\mathcal{R} = \ntuple{\mathcal{M}, \mathcal{K}}$ is right amenable. Moreover, according to \cite[Theorem~1]{wacker:automata:2016}, there is a cellular automaton $\mathcal{C} = \ntuple{\mathcal{R}, Q, N, \delta}$ such that $Q$ and $N$ are finite and $\Delta$ is its global transition function.

    \begin{description}
      \item[Case $\abs{Q} \leq 1$.] If $\abs{Q} = 0$, then, because $\abs{M} \neq 0$, we have $\abs{Q^M} = 0$. And, if $\abs{Q} = 1$, then $\abs{Q^M} = 1$. In either case, $\Delta$ is bijective, in particular, surjective and pre-injective.
      \item[Case $\abs{Q} \geq 2$.]
            According to Theorem~\ref{thm:not-surjective-implies-less-entropy} and Item~\ref{it:entropy-basic-facts:whole-space} of Lemma~\ref{lem:entropy-basic-facts}, the map $\Delta$ is not surjective if and only if $\entropy_{\mathcal{F}}(\Delta(Q^M)) < \log\abs{Q}$. And, according to Theorem~\ref{thm:less-entropy-implies-not-pre-injective} and Theorem~\ref{thm:not-pre-injective-implies-less-entropy}, we have $\entropy_{\mathcal{F}}(\Delta(Q^M)) < \log\abs{Q}$ if and only if $\Delta$ is not pre-injective. Hence, $\Delta$ is not surjective if and only if it is not pre-injective. In conclusion, $\Delta$ is surjective if and only if it is pre-injective. \qed
    \end{description}
  \end{proof}

  \begin{remark}
    In the situation of Remark~\ref{rem:group:interior-closure-boundary}, Main Theorem~\ref{thm:garden-of-eden} is \cite[Theorem~5.3.1]{ceccherini-silberstein:coornaert:2010}.
  \end{remark}

  \newpage

  \appendix

  In Appendix~\ref{apx:proof-ideas} we present some non-rigorous proof ideas of important lemmata and theorems. And in Appendix~\ref{apx:topologies-and-nets} we present the basic theory of topologies and nets.

  \section{Proof Ideas}
  \label{apx:proof-ideas}

  \begin{proof-idea}[Lemma~\ref{lem:upper-bound-of-tiling-cap-folner-net}; See Fig.~\ref{fig:upper-bound-of-tiling-cap-folner-net}]
    Let $T_i^- = T \cap F_i^{-E}$ and let $T_i^+ = T \cap F_i^{+E'}$. Then,
    \begin{equation*}
      \abs{F_i} \leq \abs{T_i^+} \cdot \abs{E'}
                \leq \parens*{\abs{T_i^-} + \abs{\boundary_{E'} F_i}} \cdot \abs{E'}
    \end{equation*}
    and hence
    \begin{equation*}
      \abs{T_i^-} \geq \abs{F_i} \cdot \parens*{\frac{1}{\abs{E'}} - \frac{\abs{\boundary_{E'} F_i}}{\abs{F_i}}}.
    \end{equation*}
    For great enough indices $i \in I$, the right side gets arbitrarily close to $\abs{F_i} / \abs{E'}$.
  \end{proof-idea}

  \begin{intuition}[Definition~\ref{def:entropy}] 
    The entropy $\entropy_{\mathcal{F}}(X)$ of $X$ with respect to the right Følner net $\mathcal{F}$ in $\mathcal{R}$ is the asymptotic growth rate of the number of finite patterns with domain $F_i$ that occur in $X$, that is, 
    \begin{equation*}
      \net{2^{\abs{F_i} \cdot \entropy_{\mathcal{F}}(X)}}_{i \in I} \sim \net{\abs{\pi_{F_i}(X)}}_{i \in I}, 
    \end{equation*}
    where $\sim$ is the binary relation, read \emph{asymptotic to}, given by
    \begin{equation*}
      \ForEach \net{r_i}_{i \in I} \ForEach \net{r_i'}_{i \in I} \Holds \net{r_i}_{i \in I} \sim \net{r_i'}_{i \in I} \iff \lim_{i \in I} \frac{r_i}{r_i'} = 1.
    \end{equation*}
  \end{intuition} 

  \begin{proof-idea}[Theorem~\ref{thm:entropy-does-non-increase}]
    Because of the locality of $\Delta$, we have $\abs{\pi_{F_i^{-N}}(\Delta(X))} \leq \abs{\pi_{F_i}(X)}$ and hence, for each index $i \in I$,
    \begin{equation*}
      \abs{\pi_{F_i}(\Delta(X))} \leq \abs{\pi_{F_i^{-N}}(\Delta(X))} \cdot \abs{Q}^{\abs{F_i \smallsetminus F_i^{-N}}}
                                 \leq \abs{\pi_{F_i}(X)} \cdot \abs{Q}^{\abs{F_i \smallsetminus F_i^{-N}}}.
    \end{equation*}
    And, because $\mathcal{F}$ is a right Følner net, $\net{\abs{F_i \smallsetminus F_i^{-N}}}_{i \in I} \sim \net{0 \cdot \abs{F_i}}_{i \in I}$ and hence 
      $\net{\abs{Q}^{\abs{F_i \smallsetminus F_i^{-N}}}}_{i \in I} \sim \net{1}_{i \in I}$.
    Thus,
    \begin{align*}
      \net{2^{\abs{F_i} \cdot \entropy_{\mathcal{F}}(\Delta(X))}}_{i \in I}
      &\sim \net{\abs{\pi_{F_i}(\Delta(X))}}_{i \in I}\\
      &\curlyeqprec \net{\abs{\pi_{F_i}(X)}}_{i \in I} \cdot \net{\abs{Q}^{\abs{F_i \smallsetminus F_i^{-N}}}}_{i \in I}\\ 
      &\sim \net{2^{\abs{F_i} \cdot \entropy_{\mathcal{F}}(X)}}_{i \in I},
    \end{align*}
    where $\curlyeqprec$ is the binary relation, read \emph{asymptotic not greater than}, given by 
    \begin{equation*}
      \ForEach \net{r_i}_{i \in I} \ForEach \net{r_i'}_{i \in I} \Holds \net{r_i}_{i \in I} \curlyeqprec \net{r_i'}_{i \in I} \iff \lim_{i \in I} \frac{r_i}{r_i'} \leq 1.
    \end{equation*}
    Therefore, $\entropy_{\mathcal{F}}(\Delta(X)) \leq \entropy_{\mathcal{F}}(X)$.
  \end{proof-idea}

  \begin{proof-idea}[Lemma~\ref{lem:entorpy-bounded-above-if-strange-tiling-exists}; See Fig.~\ref{fig:entorpy-bounded-above-if-strange-tiling-exists}]
    Let $\varepsilon \in \R_{> 0}$ and let $i_0 \in I$ be the ones from Lemma~\ref{lem:upper-bound-of-tiling-cap-folner-net}. Furthermore, let $T_i = T \cap F_i^{-E}$ and let $F_i^* = F_i \smallsetminus (\bigcup_{t \in T_i} t \rightsemiaction E)$. Moreover, let $c = - \log (1 - \frac{1}{\abs{Q^E}}) \in \R_{> 0}$. Then, for each index $i \geq i_0$,
    \begin{align*}
      \abs{\pi_{F_i}(X)} &\leq \abs{Q^{F_i^*}} \cdot \prod_{t \in T_i} \abs{\pi_{t \rightsemiaction E}(X)}\\
                         &\leq \frac{\abs{Q^{F_i}}}{\prod_{t \in T_i} \abs{Q^{t \rightsemiaction E}}} \cdot \prod_{t \in T_i} \parens*{\abs{Q^{t \rightsemiaction E}} - 1}\\
                         &=    \abs{Q^{F_i}} \cdot \prod_{t \in T_i} \parens*{1 - \frac{1}{\abs{Q^{t \rightsemiaction E}}}}\\
                         &=    \abs{Q^{F_i}} \cdot \parens*{1 - \frac{1}{\abs{Q^E}}}^{\abs{T_i}}\\
                         &\leq \abs{Q^{F_i}} \cdot \parens*{1 - \frac{1}{\abs{Q^E}}}^{\varepsilon \abs{F_i}}\\
                         &=    \abs{\pi_{F_i}(Q^M)} \cdot 2^{\abs{F_i} \cdot (- c \varepsilon)}.
    \end{align*}
    Hence, because $c \varepsilon > 0$,
    \begin{align*}
      \net{2^{\abs{F_i} \cdot \entropy_{\mathcal{F}}(X)}}_{i \in I}
      &\sim \net{\abs{\pi_{F_i}(X)}}_{i \in I}\\
      &\curlyeqprec \net{2^{\abs{F_i} \cdot \entropy_{\mathcal{F}}(\Delta(Q^M))}}_{i \in I} \cdot \net{2^{\abs{F_i} \cdot (- c \varepsilon)}}_{i \in I}\\
      &\prec \net{2^{\abs{F_i} \cdot \log\abs{Q}}}_{i \in I},
    \end{align*}
    where $\prec$ is the binary relation, read \emph{asymptotic less than}, given by
    \begin{equation*}
      \ForEach \net{r_i}_{i \in I} \ForEach \net{r_i'}_{i \in I} \Holds \net{r_i}_{i \in I} \curlyeqprec \net{r_i'}_{i \in I} \iff \lim_{i \in I} \frac{r_i}{r_i'} < 1.
    \end{equation*}
    Therefore, $\entropy_{\mathcal{F}}(X) < \log\abs{Q}$.
  \end{proof-idea}

  \begin{proof-idea}[Lemma~\ref{lem:entropy-invariant-under-closure-net-change}] 
    We have $\abs{\pi_{F_i^{+E}}(X)} \leq \abs{\pi_{F_i}(X)} \cdot \abs{Q}^{\abs{F_i^{+E} \smallsetminus F_i}}$. And, because $\mathcal{F}$ is a right Følner net, we have $\net{\abs{Q^{F_i^{+E} \smallsetminus F_i}}}_{i \in I} \sim \net{1}_{i \in I}$. Thus, 
    \begin{align*}
      \net{2^{\abs{F_i^{+E}} \cdot \entropy_{\net{F_i^{+E}}_{i \in I}}(X)}}_{i \in I}
      &\sim \net{\abs{\pi_{F_i^{+E}}(X)}}_{i \in I}\\
      &\curlyeqprec \net{\abs{\pi_{F_i}(X)}}_{i \in I} \cdot \net{\abs{Q}^{\abs{F_i^{+E} \smallsetminus F_i}}}_{i \in I}\\ 
      &\sim \net{2^{\abs{F_i} \cdot \entropy_{\mathcal{F}}(X)}}_{i \in I}.
    \end{align*}
    Therefore, $\entropy_{\net{F_i^{+E}}_{i \in I}}(X) \leq \entropy_{\mathcal{F}}(X)$.
  \end{proof-idea}

  \begin{proof-idea}[Theorem~\ref{thm:less-entropy-implies-not-pre-injective}] 
    The asymptotic growth rate of finite patterns in $\Delta(Q^M)$ is less than the one of $Q^M$. Hence, there are at least two finite patterns in $Q^M$ with a domain $A$ that have the same image under $\Delta_A^{-}$.
  \end{proof-idea}

  \begin{proof-idea}[Theorem~\ref{thm:not-pre-injective-implies-less-entropy}] 
    For $N' = \bigcup_{n \in N} n^{-1} N$, there is a subset $A$ of $M$ and there are two distinct finite patterns $p$ and $p'$ with domain $A^{+N'}$ that have the same image under $\Delta_{A^{+N'}}^-$. The set $Y$ of all configurations in which $p$ does not occur at the cells of a tiling has the same image under $\Delta$ as $Q^M$, because in a configuration we may replace occurrences of $p$ by $p'$ without changing the image. Thus, $\entropy(\Delta(Q^M)) = \entropy(\Delta(Y)) \leq \entropy(Y)$. Moreover, because $Y$ is missing the pattern $p$ at each cell of a tiling, we have $\entropy(Y) < \entropy(Q^M) = \log\abs{Q}$.
  \end{proof-idea}

  \section{Topologies and Nets}
  \label{apx:topologies-and-nets}

  The theory of topologies and nets as presented here may be found in more detail in Appendix~A in the monograph \enquote{Cellular Automata and Groups}\cite{ceccherini-silberstein:coornaert:2010}.

  \begin{definition}
    Let $X$ be a set and let $\mathcal{T}$ be a set of subsets of $X$. The set $\mathcal{T}$ is called \define{topology on $X$}\graffito{topology $\mathcal{T}$ on $X$} if, and only if
    \begin{enumerate}
      \item $\set{\emptyset, X}$ is a subset of $\mathcal{T}$,
      \item for each family $\family{O_i}_{i \in I}$ of elements in $\mathcal{T}$, the union $\bigcup_{i \in I} O_i$ is an element of $\mathcal{T}$,
      \item for each finite family $\family{O_i}_{i \in I}$ of elements in $\mathcal{T}$, the intersection $\bigcap_{i \in I} O_i$ is an element of $\mathcal{T}$. 
    \end{enumerate}
  \end{definition}

  \begin{definition}
    Let $X$ be a set, and let $\mathcal{T}$ and $\mathcal{T}'$ be two topologies on $X$. The topology $\mathcal{T}$ is called
    \begin{enumerate}
      \item \define{coarser than $\mathcal{T}'$}\graffito{coarser than $\mathcal{T}'$} if, and only if $\mathcal{T} \subseteq \mathcal{T}'$;
      \item \define{finer than $\mathcal{T}'$}\graffito{finer than $\mathcal{T}'$} if, and only if $\mathcal{T} \supseteq \mathcal{T}'$.
    \end{enumerate}
  \end{definition}

  \begin{definition} 
    Let $X$ be a set and let $\mathcal{T}$ be a topology on $X$. The tuple $(X, \mathcal{T})$ is called \define{topological space}\graffito{topological space $(X, \mathcal{T})$}, each subset $O$ of $X$ with $O \in \mathcal{T}$ is called \define{open in $X$}\graffito{open set $O$ in $X$}, each subset $A$ of $X$ with $X \smallsetminus A \in \mathcal{T}$ is called \define{closed in $X$}\graffito{closed set $A$ in $X$}, and each subset $U$ of $X$ that is both open and closed is called \define{clopen in $X$}\graffito{clopen set $U$ in $X$}. 

    The set $X$ is said to be \define{equipped with $\mathcal{T}$}\graffito{equipped with $\mathcal{T}$} if, and only if it shall be implicitly clear that $\mathcal{T}$ is the topology on $X$ being considered. The set $X$ is called \define{topological space}\graffito{topological space $X$} if, and only if it is implicitly clear what topology on $X$ is being considered. 
  \end{definition}

  \begin{example}
    Let $X$ be a set. The set $\powerset(X)$ is the finest topology on $X$. Itself as well as the topological space $(X, \powerset(X))$ are called \define{discrete}\index{discrete!topological space}\index{discrete!topology}\graffito{discrete}.
  \end{example}

  \begin{definition} 
    Let $(X, \mathcal{T})$ be a topological space, let $x$ be a point of $X$, and let $N$ be a subset of $X$. The set $N$ is called \define{neighbourhood of $x$}\graffito{neighbourhood of $x$} if, and only if there is an open subset $O$ of $X$ such that $x \in O$ and $O \subseteq N$.
  \end{definition}

  \begin{definition} 
    Let $(X, \mathcal{T})$ be a topological space and let $x$ be a point of $X$. The set of all open neighbourhoods of $x$ is denoted by $\mathcal{T}_x$. 
  \end{definition}

  \begin{definition} 
    Let $I$ be a set and let $\leq$ be a binary relation on $I$.
    The relation $\leq$ is called \define{preorder on $I$}\graffito{preorder $\leq$ on $I$} and the tuple $(I, \leq)$ is called \define{preordered set}\graffito{preordered set $(I, \leq)$} if, and only if the relation $\leq$ is reflexive and transitive.
  \end{definition}

  \begin{definition}
    Let $\leq$ be a preorder on $I$. It is called \define{directed}\graffito{directed preorder $\leq$ on $I$}\index{preorder on $I$!directed} and the preordered set $(I, \leq)$ is called \define{directed set}\graffito{directed set $(I, \leq)$} if, and only if
    \begin{equation*}
      \ForEach i \in I \ForEach i' \in I \Exists i'' \in I \SuchThat i \leq i'' \land i' \leq i''.
    \end{equation*}
  \end{definition}

  \begin{definition} 
    Let $\leq$ be a preorder on $I$, let $J$ be a subset of $I$, and let $i$ be an element of $I$. The element $i$ is called \define{upper bound of $J$ in $(I, \leq)$}\graffito{upper bound of $J$ in $(I, \leq)$} if, and only if
    \begin{equation*}
      \ForEach i' \in J \Holds i' \leq i.
    \end{equation*}
  \end{definition}

  \begin{definition} 
    Let $M$ be a set, let $I$ be a set, and let $f \from I \to M$ be a map. The map $f$ is called \define{family of elements in $M$ indexed by $I$}\graffito{family $\family{m_i}_{i \in I}$ of elements in $M$ indexed by $I$} and denoted by $\family{m_i}_{i \in I}$, where, for each index $i \in I$, $m_i = f(i)$.
  \end{definition}

  \begin{definition}
    Let $I$ be a set, let $\leq$ be a binary relation on $I$, and let $\family{m_i}_{i \in I}$ be a family of elements in $M$ indexed by $I$. The family $\family{m_i}_{i \in I}$ is called \define{net in $M$ indexed by $(I, \leq)$}\graffito{net $\family{m_i}_{i \in I}$ in $M$ indexed by $(I, \leq)$} if, and only if the tuple $(I, \leq)$ is a directed set.
  \end{definition}

  \begin{definition} 
    Let $\net{m_i}_{i \in I}$ and $\net{m_j'}_{j \in J}$ be two nets in $M$. The net $\net{m_j'}_{j \in J}$ is called \define{subnet of $\net{m_i}_{i \in I}$}\graffito{subnet $\net{m_j'}_{j \in J}$ of $\net{m_i}_{i \in I}$} if, and only if there is a map $f \from J \to I$ such that $\net{m_j'}_{j \in J} = \net{m_{f(j)}}_{j \in J}$ and
    \begin{equation*} 
      \ForEach i \in I \Exists j \in J \SuchThat \ForEach j' \in J \Holds (j' \geq j \implies f(j') \geq i).
    \end{equation*}
  \end{definition}

  \begin{definition}
    Let $(X, \mathcal{T})$ be a topological space, let $\net{x_i}_{i \in I}$ be a net in $X$ indexed by $(I, \leq)$, and let $x$ be a point of $X$. The net $\net{x_i}_{i \in I}$ is said to \define{converge to $x$}\graffito{converge to $x$} and $x$ is called \define{limit point of $\net{x_i}_{i \in I}$}\graffito{limit point of $\net{x_i}_{i \in I}$} if, and only if
    \begin{equation*}
      \ForEach O \in \mathcal{T}_x \Exists i_0 \in I \SuchThat \ForEach i \in I \Holds (i \geq i_0 \implies x_i \in O).
    \end{equation*}
  \end{definition} 

  \begin{definition}
    Let $(X, \mathcal{T})$ be a topological space and let $\net{x_i}_{i \in I}$ be a net in $X$ indexed by $(I, \leq)$. The net $\net{x_i}_{i \in I}$ is called \define{convergent}\graffito{convergent} if, and only if there is a point $x \in X$ such that it converges to $x$.
  \end{definition}

  \begin{remark} 
    Let $\net{m_i}_{i \in I}$ be a net that converges to $x$. Each subnet $\net{m_j'}_{j \in J}$ of $\net{m_i}_{i \in I}$ converges to $x$.
  \end{remark}

  \begin{lemma} 
    Let $(X, \mathcal{T})$ be a topological space, let $Y$ be a subset of $X$, and let $x$ be an element of $X$. Then, $x \in \closure{Y}$ if, and only if there is a net $\net{y_i}_{i \in I}$ in $Y$ that converges to $x$. 
  \end{lemma}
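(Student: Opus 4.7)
The argument is the classical one; the only subtlety is constructing a suitable directed index set for the forward direction. I use the standard characterisation: $x \in \closure{Y}$ if and only if every open neighbourhood of $x$ meets $Y$.

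For the reverse implication, suppose there is a net $\net{y_i}_{i \in I}$ in $Y$ that converges to $x$. Let $O \in \mathcal{T}_x$. By convergence, there is an $i_0 \in I$ such that, for each $i \in I$ with $i \geq i_0$, we have $y_i \in O$. In particular $y_{i_0} \in O \cap Y$, so $O \cap Y \neq \emptyset$. Hence every open neighbourhood of $x$ meets $Y$, which gives $x \in \closure{Y}$.

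For the forward implication, suppose $x \in \closure{Y}$. The plan is to use $\mathcal{T}_x$ itself as the index set. Define $J = \mathcal{T}_x$ and order it by reverse inclusion, so that $O \leq O'$ in $J$ means $O \supseteq O'$. This relation is reflexive and transitive, and it is directed: for any $O_1, O_2 \in J$, the intersection $O_1 \cap O_2$ lies in $\mathcal{T}_x$ and satisfies $O_1 \leq O_1 \cap O_2$ and $O_2 \leq O_1 \cap O_2$. Since $x \in \closure{Y}$, for each $O \in J$ the intersection $O \cap Y$ is non-empty, so by the axiom of choice I pick some $y_O \in O \cap Y$. This gives a net $\net{y_O}_{O \in J}$ in $Y$. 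To verify convergence to $x$, let $U \in \mathcal{T}_x$ and take $i_0 = U \in J$. Then, for each $O \in J$ with $O \geq i_0$, we have $O \subseteq U$ by definition of $\leq$, hence $y_O \in O \subseteq U$. Thus $\net{y_O}_{O \in J}$ converges to $x$.

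The main obstacle, such as it is, lies in picking the right directed set in the forward direction: one must order the neighbourhood filter $\mathcal{T}_x$ by reverse inclusion so that \enquote{going further in the net} corresponds to \enquote{shrinking the neighbourhood}. Once this bookkeeping is in place, both implications are immediate from the definitions of net convergence and of closure. \qed
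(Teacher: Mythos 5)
Your proof is correct. The paper itself does not spell out an argument for this lemma but simply defers to Proposition~A.2.1 of \enquote{Cellular Automata and Groups}; what you have written is precisely the standard argument found there --- the reverse implication straight from the definition of convergence, and the forward implication by indexing over the neighbourhood filter $\mathcal{T}_x$ ordered by reverse inclusion and choosing a point of $O \cap Y$ for each $O$ --- so your proposal matches the intended proof and fills in the details the paper omits.
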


  \begin{proof}
    See Proposition~A.2.1 in \enquote{Cellular Automata and Groups}\cite{ceccherini-silberstein:coornaert:2010}. \qed
  \end{proof}

  \begin{lemma} 
    Let $(X, \mathcal{T})$ be a topological space. It is Hausdorff if, and only if each convergent net in $X$ has exactly one limit point. 
  \end{lemma}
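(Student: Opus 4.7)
The plan is to prove both directions of the biconditional. For the forward direction, I would assume $(X, \mathcal{T})$ is Hausdorff and let $\net{x_i}_{i \in I}$ be a convergent net. Suppose for contradiction that it converges to two distinct points $x$ and $x'$. By the Hausdorff property, there are $O \in \mathcal{T}_x$ and $O' \in \mathcal{T}_{x'}$ with $O \cap O' = \emptyset$. Convergence to $x$ yields some $i_0 \in I$ with $x_i \in O$ for all $i \geq i_0$, and convergence to $x'$ yields some $i_0' \in I$ with $x_i \in O'$ for all $i \geq i_0'$. By directedness, pick $i'' \in I$ with $i'' \geq i_0$ and $i'' \geq i_0'$; then $x_{i''} \in O \cap O' = \emptyset$, a contradiction.

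For the converse, I would argue by contrapositive: assuming $X$ is not Hausdorff, I construct a convergent net with two distinct limit points. Fix distinct $x, x' \in X$ such that every $O \in \mathcal{T}_x$ meets every $O' \in \mathcal{T}_{x'}$. Take the index set $I = \mathcal{T}_x \times \mathcal{T}_{x'}$ with the preorder $(O_1, O_1') \leq (O_2, O_2') \iff O_2 \subseteq O_1 \text{ and } O_2' \subseteq O_1'$. This is directed because $(O_1 \cap O_2, O_1' \cap O_2')$ is an upper bound of $(O_1, O_1')$ and $(O_2, O_2')$, and intersections of finitely many open neighbourhoods of a point are again open neighbourhoods of that point. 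For each index $i = (O, O') \in I$ choose $x_i \in O \cap O'$; this choice is possible by the failure of Hausdorffness.

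Then I would verify that $\net{x_i}_{i \in I}$ converges to both $x$ and $x'$. Given $U \in \mathcal{T}_x$, set $i_0 = (U, X) \in I$. For each $i = (O, O') \geq i_0$, we have $O \subseteq U$, so $x_i \in O \cap O' \subseteq U$; hence $\net{x_i}_{i \in I}$ converges to $x$. The same argument with $i_0 = (X, U')$ for $U' \in \mathcal{T}_{x'}$ shows convergence to $x'$. Since $x \neq x'$, the net has at least two limit points, contradicting the hypothesis.

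The only subtle point is the construction of the directed index set in the converse direction; everything else is a routine unpacking of definitions. The argument generalises the familiar sequential proof in first-countable spaces, and the use of nets rather than sequences is exactly what makes the biconditional hold without any countability assumption.
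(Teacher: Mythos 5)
Your proof is correct. The paper itself gives no argument for this lemma---it simply defers to Proposition~A.2.2 of Ceccherini-Silberstein and Coornaert---and your two-directional argument (disjoint neighbourhoods plus directedness for one direction; the net indexed by $\mathcal{T}_x \times \mathcal{T}_{x'}$ under reverse inclusion for the other) is exactly the standard proof found in that reference, so there is nothing to compare beyond noting that you have supplied the details the paper omits.
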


  \begin{proof}
    See Proposition~A.2.2 in \enquote{Cellular Automata and Groups}\cite{ceccherini-silberstein:coornaert:2010}. \qed
  \end{proof}

  \begin{definition}
    Let $(X, \mathcal{T})$ be a Hausdorff topological space, let $\net{x_i}_{i \in I}$ be a convergent net in $X$ indexed by $(I, \leq)$, and let $x$ be the limit point of $\net{x_i}_{i \in I}$. The point $x$ is denoted by $\lim_{i \in I} x_i$\graffito{$\lim_{i \in I} x_i$} and we write $x_i \underset{i \in I}{\to} x$\graffito{$x_i \underset{i \in I}{\to} x$}.
  \end{definition}

  \begin{definition}
    Let $(X, \mathcal{T})$ be a topological space, let $\net{x_i}_{i \in I}$ be a net in $X$ indexed by $(I, \leq)$, and let $x$ be an element of $X$. The point $x$ is called \define{cluster point of $\net{x_i}_{i \in I}$}\graffito{cluster point $x$ of $\net{x_i}_{i \in I}$} if, and only if
    \begin{equation*}
      \ForEach O \in \mathcal{T}_x \ForEach i \in I \Exists i' \in I \SuchThat (i' \geq i \land x_{i'} \in O).
    \end{equation*}
  \end{definition}

  \begin{lemma} 
    Let $(X, \mathcal{T})$ be a topological space, let $\net{x_i}_{i \in I}$ be a net in $X$ indexed by $(I, \leq)$, and let $x$ be an element of $X$. The point $x$ is a cluster point of $\net{x_i}_{i \in I}$ if, and only if there is a subnet of $\net{x_i}_{i \in I}$ that converges to $x$.
  \end{lemma}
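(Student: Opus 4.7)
The plan is to prove the two implications separately; the easy direction is ``convergent subnet implies cluster point,'' and the hard direction is the reverse, where we must explicitly construct a convergent subnet.

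For the easy direction, I would assume there is a subnet $\net{m_j'}_{j \in J} = \net{x_{f(j)}}_{j \in J}$ converging to $x$, fix $O \in \mathcal{T}_x$ and $i \in I$, and then produce $i' \geq i$ with $x_{i'} \in O$. Convergence gives a $j_0 \in J$ past which $x_{f(j)} \in O$, and the defining property of a subnet gives a $j_1 \in J$ past which $f(j) \geq i$. Since $J$ is directed, pick $j \geq j_0, j_1$; then $i' = f(j)$ works.

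For the hard direction, I would use the standard construction. Set
\begin{equation*}
  J = \set{(i, O) \in I \times \mathcal{T}_x \suchthat x_i \in O},
\end{equation*}
ordered by $(i, O) \leq (i', O') \iff i \leq i' \land O \supseteq O'$, and define $f \from J \to I$, $(i, O) \mapsto i$. To see $(J, \leq)$ is directed, take $(i_1, O_1), (i_2, O_2) \in J$ and an upper bound $i \in I$ of $i_1, i_2$; by the cluster point property applied to $O_1 \cap O_2 \in \mathcal{T}_x$ and $i$, there is $i' \geq i$ with $x_{i'} \in O_1 \cap O_2$, so $(i', O_1 \cap O_2)$ is a common upper bound in $J$. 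The subnet condition holds because for any $i_0 \in I$ we may take $j_0 = (i_0, X) \in J$; any $(i, O) \geq (i_0, X)$ satisfies $f(i, O) = i \geq i_0$. Finally, for convergence, given $O \in \mathcal{T}_x$ pick any $(i, O) \in J$ (which exists by the cluster point property); then any $(i', O') \geq (i, O)$ satisfies $O' \subseteq O$ and $x_{i'} \in O' \subseteq O$, i.e.\ $x_{f(i', O')} \in O$.

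The main obstacle is simply getting the bookkeeping of the index set $J$ right: one must verify directedness of $J$, the subnet property with respect to $(I, \leq)$, and convergence to $x$, and each of these uses a different hypothesis (cluster point for directedness and for convergence, triviality for the subnet property). No deep ideas are needed beyond the choice of $J$ and $\leq$.
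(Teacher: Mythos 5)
Your proof is correct: both directions are handled properly, and the index set $J = \set{(i, O) \suchthat x_i \in O}$ with the product-style preorder is exactly the standard construction. The paper itself does not prove this lemma but merely cites Proposition~A.2.3 of Ceccherini-Silberstein and Coornaert, and your argument is the one given there, so you have in effect supplied the proof the paper outsources. The only (harmless, universally ignored) caveat is the degenerate case of an empty index set, where the cluster-point condition holds vacuously but no subnet can converge; for non-empty $I$ your argument is complete.
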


  \begin{proof}
    See Proposition~A.2.3 in \enquote{Cellular Automata and Groups}\cite{ceccherini-silberstein:coornaert:2010}. \qed
  \end{proof}

  \begin{lemma}
    Let $(X, \mathcal{T})$ and $(X', \mathcal{T}')$ be two topological spaces, let $f$ be a continuous map from $X$ to $X'$, let $\net{x_i}_{i \in I}$ be a net in $X$, and let $x$ be an element of $X$.
    \begin{enumerate}
      \item If $x$ is a limit point of $\net{x_i}_{i \in I}$, then $f(x)$ is a limit point of $\net{f(x_i)}_{i \in I}$. 
      \item If $x$ is a cluster point of $\net{x_i}_{i \in I}$, then $f(x)$ is a cluster point of $\net{f(x_i)}_{i \in I}$.
    \end{enumerate}
  \end{lemma}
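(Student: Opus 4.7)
The plan is to unpack the definitions of limit point and cluster point of a net and use the fact that, by continuity of $f$, the preimage of any open neighbourhood of $f(x)$ in $X'$ is an open neighbourhood of $x$ in $X$. Both parts are then essentially one-line translations, and no obstacle of substance arises; the argument is the classical transfer of net-convergence under continuous maps.

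For item 1, I would fix an open neighbourhood $O' \in \mathcal{T}'_{f(x)}$. Since $f$ is continuous and $f(x) \in O'$, the preimage $f^{-1}(O')$ lies in $\mathcal{T}_x$. Because $x$ is a limit point of $\net{x_i}_{i \in I}$, there is an $i_0 \in I$ such that for each $i \in I$ with $i \geq i_0$ we have $x_i \in f^{-1}(O')$, which means $f(x_i) \in O'$. Since $O'$ was arbitrary, $\net{f(x_i)}_{i \in I}$ converges to $f(x)$, i.e. $f(x)$ is a limit point of $\net{f(x_i)}_{i \in I}$.

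For item 2, I would fix an open neighbourhood $O' \in \mathcal{T}'_{f(x)}$ and an index $i \in I$. Again by continuity, $f^{-1}(O') \in \mathcal{T}_x$. Because $x$ is a cluster point of $\net{x_i}_{i \in I}$, applied to the open neighbourhood $f^{-1}(O')$ and the index $i$, there is an $i' \in I$ with $i' \geq i$ and $x_{i'} \in f^{-1}(O')$, hence $f(x_{i'}) \in O'$. As this holds for every $O' \in \mathcal{T}'_{f(x)}$ and every $i \in I$, the point $f(x)$ is a cluster point of $\net{f(x_i)}_{i \in I}$.

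The only subtlety worth flagging is making sure the continuity of $f$ is used in the form \enquote{the preimage of an open set is open} (which is a standard equivalent to the $\varepsilon$-$\delta$ style definition and presumably the one adopted in the paper's background section); once that is in hand, both parts follow immediately from the quantifier structure of the respective definitions. No Hausdorffness or directedness beyond what is already part of the definition of a net is needed.
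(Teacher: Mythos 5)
Your proof is correct and is exactly the standard argument: the paper itself does not write out a proof but merely defers to the last paragraph of Sect.~A.2 of \enquote{Cellular Automata and Groups}, and the argument found there is the same transfer of the quantifier structure via the fact that $f^{-1}(O') \in \mathcal{T}_x$ for every $O' \in \mathcal{T}'_{f(x)}$. Nothing is missing; in particular you are right that neither Hausdorffness nor anything beyond directedness of $(I, \leq)$ is needed.
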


  \begin{proof} 
    Confer the last paragraph of Sect.~A.2 in \enquote{Cellular Automata and Groups}\cite{ceccherini-silberstein:coornaert:2010}. \qed
  \end{proof}

  \begin{definition} 
    Let $\overline{\R} = \R \cup \set{-\infty,+\infty}$ be the affinely extended real numbers and let $\net{r_i}_{i \in I}$ be a net in $\overline{\R}$ indexed by $(I, \leq)$.
    \begin{enumerate} 
      \item The limit of the net $\net{\inf_{i' \geq i} r_{i'}}_{i \in I}$ is called \define{limit inferior of $\net{r_i}_{i \in I}$}\graffito{limit inferior $\liminf_{i \in I} r_i$ of $\net{r_i}_{i \in I}$} and denoted by $\liminf_{i \in I} r_i$.
      \item The limit of the net $\net{\sup_{i' \geq i} r_{i'}}_{i \in I}$ is called \define{limit superior of $\net{r_i}_{i \in I}$}\graffito{limit superior $\limsup_{i \in I} r_i$ of $\net{r_i}_{i \in I}$} and denoted by $\limsup_{i \in I} r_i$.
    \end{enumerate}
  \end{definition}

\end{document}